\documentclass[11pt,reqno,a4paper]{amsart}
\setlength{\headheight}{41.68335pt}
\usepackage{setspace}
\usepackage[english]{babel}
\usepackage[utf8]{inputenc}
\usepackage{tikz}
\usetikzlibrary{cd}
\usepackage{amsfonts}  
\usepackage{amsmath} 
\usepackage{amsthm}
\numberwithin{equation}{section}
\usepackage{amssymb}
\usepackage{mathtools}
\usepackage{mathrsfs}
\usepackage{hyperref}
\usepackage{setspace}
\setcounter{tocdepth}{1}
\usepackage{csquotes}

\allowdisplaybreaks




\usepackage{units}
\usepackage{icomma}
\usepackage{graphicx}
\usepackage{tcolorbox}
\usepackage{listings}
\usepackage{relsize}
\usepackage{subcaption}
\usepackage{mdframed}
\usepackage{caption}
\usepackage{amsthm}
\usepackage{rotating}
\usepackage{tikz-cd}
\usepackage[margin=1in,footskip=.25in]{geometry}
\usepackage{parskip}
\usepackage{fancyhdr}

\patchcmd{\subsection}{-.5em}{.5em}{}{}
\patchcmd{\subsubsection}{-.5em}{.5em}{}{}



\makeatletter
\def\thm@space@setup{%
  \thm@preskip=\parskip \thm@postskip=0pt
}
\makeatother

\newcommand{\e}{\ensuremath{\mathrm{e}}} 

\newcommand{\Id}{\mathrm{Id}}
\newcommand{\pr}{\ensuremath{\mathrm{pr}}}

\newcommand{\Sch}{\mathrm{Sch}}

\renewcommand{\frak}{\mathfrak}

\newcommand{\N}{\mathbb{N}}
\newcommand{\Z}{\mathbb{Z}}
\newcommand{\Q}{\mathbb{Q}}
\newcommand{\R}{\mathbb{R}}
\newcommand{\C}{\mathbb{C}}

\newcommand{\PGL}{\mathrm{PGL}}

\newcommand{\Stab}{\mathrm{Stab}}
\newcommand{\Vol}{\mathrm{Vol}}

\newcommand{\Var}{\mathrm{Var}}
\newcommand{\Cov}{\mathrm{Cov}}

\newcommand{\rad}{\mathrm{rad}}
\newcommand{\sgn}{\mathrm{sgn}}

\newcommand{\qplusonetree}{\mathbb{T}_{q+1}}
\newcommand{\Gqplusone}{G_{q}}
\newcommand{\Kqplusone}{K_{q}}

\newcommand{\Emu}{\mathbb{E}_{\mu}}

\newcommand{\Varmu}{\mathrm{Var}_{\mu}}

\newcommand{\NVmu}{\mathrm{NV}_{\mu}}
\newcommand{\NV}{\mathrm{NV}}

\newcommand{\Poi}{\mathrm{Poi}}

\newcommand{\arccosh}{\mathrm{arccosh}}
\newcommand{\arcsinh}{\mathrm{arcsinh}}
\newcommand{\arctanh}{\mathrm{arctanh}}

\renewcommand{\Re}{\mathrm{Re}}
\renewcommand{\Im}{\mathrm{Im}}

\newcommand{\Aut}{\operatorname{Aut}}

\newcommand{\Borelinfty}{\mathscr{L}^{\infty}}
\newcommand{\Borelbndinfty}{\mathscr{L}_{\mathrm{fin}}}

\newcommand{\Radonplus}{\mathscr{M}_+}

\renewcommand{\hat}{\widehat}
\renewcommand{\tilde}{\widetilde}

\newcommand{\norm}[1]{\lVert#1\rVert}


\newcommand{\cH}{\mathcal{H}}
\newcommand{\cI}{\mathcal{I}}

\newcommand{\cO}{\mathcal{O}}
\newcommand{\cP}{\mathcal{P}}


\newcommand{\bH}{\mathbb{H}}

\newcommand{\bP}{\mathbb{P}}

\newcommand{\bS}{\mathbb{S}}
\newcommand{\bT}{\mathbb{T}}



\newcommand{\sH}{\mathscr{H}}

\newcommand{\sU}{\mathscr{U}}


\newcommand{\rS}{\mathrm{S}}




\definecolor{lichtgrijs}{gray}{1.00}
\mdfdefinestyle{mystyle}{ %
    backgroundcolor=lichtgrijs, %
    linewidth=0pt, %
    innertopmargin=5pt, %
    innerbottommargin=10pt,%
}

\newtheorem{theorem}{Theorem}[section]
\newtheorem{corollary}[theorem]{Corollary}
\newtheorem{proposition}[theorem]{Proposition}
\newtheorem{lemma}[theorem]{Lemma}

\theoremstyle{definition}
\newtheorem{definition}[theorem]{Definition}
\newtheorem{remark}[theorem]{Remark}

\newtheorem{example}{Example}[section]








\usepackage[bibstyle=numeric]{biblatex}

\renewbibmacro{in:}{}
\bibliography{Bibliography} 


\begin{document}

\newgeometry{top=3cm, bottom=3.5cm,left=3cm,right=3cm}

\title{Hyperuniformity in regular trees}
\author{Mattias Byl\'ehn}
\subjclass[2020]{Primary: 60G55. Secondary: 43A90, 05C48}
\keywords{Hyperuniformity, regular trees, spherical diffraction, asymptotic properties of spherical functions}

\pagestyle{plain}
\pagenumbering{arabic}

\begin{abstract}
We study notions of hyperuniformity for invariant locally square-integrable point processes in regular trees. We show that such point processes are never geometrically hyperuniform, and if the diffraction measure has support in the complementary series then the process is geometrically hyperfluctuating along all subsequences of radii. A definition of spectral hyperuniformity and stealth of a point process is given in terms of vanishing of the complementary series diffraction and sub-Poissonian decay of the principal series diffraction near the endpoints of the principal spectrum. Our main contribution is providing examples of stealthy invariant random lattice orbits in trees whose number variance grows strictly slower than the volume along some unbounded sequence of radii. These random lattice orbits are constructed from the fundamental groups of complete graphs and the Petersen graph.
\end{abstract}

\maketitle

\section{Introduction}
The notion of \emph{hyperuniformity} or \emph{superhomogeneity} introduced by Stillinger and Torquato in \cite{StillingerTorquato} is, in its mathematical formulation, the property of an invariant point process having sub-Poissonian number variance in the large-scale radial limit. An equivalent formulation is that the so called structure factor of the point process has sub-Poissonian decay in the small scale limit for the relevant frequency domain. This property has been studied extensively for invariant point processes in Euclidean spaces by many, and more recently in compact symmetric geometries as well as real hyperbolic spaces \cite{Björklund2024HyperuniformityOfRandomMeasuresOnEuclideanAndHyperbolicSpaces, Grabner2024HyperuniformPointsetsOnProjectiveSpaces, GrabnerHyperuniformityOnSpheresDeterministicAspects, GrabnerHyperuniformityOnSpheresProbabilisticAspects, StepanyukHyperuniformityOnFlatTori}.

In this paper we investigate notions of hyperuniformity and asymptotics of number variances of invariant point processes in regular trees. Similarly to the real hyperbolic setting in \cite{Björklund2024HyperuniformityOfRandomMeasuresOnEuclideanAndHyperbolicSpaces}, we prove for any invariant point process in a regular tree that there is an unbounded sequence of metric balls for which the number variance grows at least as fast as the volume of the balls in the radial limit. Moreover, we provide examples of random lattice orbits for which there is an unbounded sequence of balls along which the number variance grows strictly slower than the volume of the balls. We also provide random lattice orbits where the number variance grows as fast as the volume along any unbounded sequence of balls. The lattices involved are constructed from fundamental groups of finite regular graphs.

We also define the diffraction measure of an invariant point process in a regular tree, generalizing the notion of structure factor, and define spectral hyperuniformity of such a point process to mean that the diffraction measure has sub-Poissonian decay near the ends of the $L^2$-spectrum of the tree, and vanishes on the rest of the spectrum. An invariant point process is defined to be stealthy if the diffraction measure vanishes completely near the ends, and the lattices mentioned in the previous paragraph that we construct from finite regular graphs are stealthy in this sense. In particular, these finite graphs, for example complete and regular complete bipartite graphs, are \emph{Ramanujan graphs} in the sense of Lubotzky, Phillips and Sarnak \cite{LubotzkyPhillipsSarnak}. Every finite regular graph for which the random lattice orbit of the fundamental group is spectrally hyperuniform is a Ramanujan graph, and the converse holds if and only if the adjacency matrix of the Ramanujan graph does not admit eigenvalues at the endpoints of the $L^2$-spectrum.

\subsection{Motivation}

This paper is motivated by a question posed in \cite{Björklund2024HyperuniformityOfRandomMeasuresOnEuclideanAndHyperbolicSpaces} by Björklund and the author. There we show that, for any isometrically invariant locally square-integrable point process $\cP$ in real hyperbolic space $\bH^d$,
\begin{align*}
\limsup_{r \rightarrow +\infty} \frac{\Var(|\cP \cap B_r(o)|)}{\Vol_{\bH^d}(B_r(o))} > 0 \, . 
\end{align*}

\textbf{Question(s)}: \textit{Is there an (explicit) isometrically invariant locally square-integrable point process $\cP$ on $\bH^d$ such that}
\begin{align*}
\liminf_{r \rightarrow +\infty} \frac{\Var(|\cP \cap B_r(o)|)}{\Vol_{\bH^d}(B_r(o))} = 0 \, ? 
\end{align*}
\textit{Moreover, can $\cP$ be taken to be a random lattice orbit? Here, $B_r(o)$ denotes the metric ball of radius $r > 0$ centered at a fixed point $o \in \bH^d$.}

We expect that the answer to the first question is yes, and acknowledge the difficulty in answering the second question. In the same paper \cite[Section 5]{Björklund2024HyperuniformityOfRandomMeasuresOnEuclideanAndHyperbolicSpaces}, we prove that the invariant random lattice $\cP_{\Z^5}$ associated with the standard lattice $\Z^5$ in $\R^5$ satisfies the analogous result, namely
\begin{align*}
\liminf_{r \rightarrow +\infty} \frac{\Var(|\cP_{\Z^5} \cap B_r(o)|)}{\Vol_{4}(\partial B_r(o))} = 0 \, . 
\end{align*}
Here we answer the above question for regular trees. The following result is included in Theorem \ref{Theorem1.4}.
\begin{theorem}
\label{Theorem1.1}
The invariant random lattice orbit $\cP_{\mathfrak{K}_{4}}$ associated with the fundamental group of the tetrahedron graph $\mathfrak{K}_{4}$, acting on the $3$-regular 
 tree $\bT_3$ by deck automorphisms, satisfies 
\begin{align*}
\liminf_{r \rightarrow +\infty} \frac{\Var(|\cP_{\mathfrak{K}_{4}} \cap B_r(o)|)}{|B_r(o)|} = 0 \, . 
\end{align*}
Here, $B_r(o)$ denotes the metric ball of integer radius $r \geq 0$ centered at a fixed point $o \in \bT_3$.
\end{theorem}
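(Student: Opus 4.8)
The plan is to reduce the number variance of $\cP_{\mathfrak{K}_4}$ to its diffraction measure, identify that diffraction from the adjacency spectrum of the tetrahedron graph, and then extract the $\liminf$ from the oscillatory large-radius behaviour of the spherical transform of a metric ball; the arithmetic heart of the matter is that the spectral angle attached to the nontrivial eigenvalue of $\mathfrak{K}_4$ is an irrational multiple of $\pi$.

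\textbf{Diffraction and the variance formula.} First I would invoke the description of the diffraction of an invariant random lattice orbit attached to a finite $(q+1)$-regular graph — here $q=2$ and the graph is $\mathfrak{K}_4$ — according to which the diffraction of $\cP_{\mathfrak{K}_4}$ is a finite atomic measure carried by the eigenvalues of the adjacency operator of $\mathfrak{K}_4$, with the atom at the Perron eigenvalue $q+1=3$ being the trivial (density) peak, which contributes only to the first moment. The adjacency matrix of $\mathfrak{K}_4$ is $J-I$, with spectrum $\{3,-1,-1,-1\}$, so the reduced diffraction is a single atom of mass $m>0$ at the spherical parameter corresponding to the adjacency eigenvalue $-1$. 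Since $|{-1}|<2\sqrt 2$, this parameter lies in the principal (tempered) series, strictly inside the spectral interval $[-2\sqrt 2,2\sqrt 2]$ and bounded away from its endpoints $\pm 2\sqrt 2$; thus $\cP_{\mathfrak{K}_4}$ is stealthy and carries no complementary-series diffraction, reflecting that $\mathfrak{K}_4$ is a Ramanujan graph. As $\cP_{\mathfrak{K}_4}$ is a randomised lattice orbit (crystalline), only this nontrivial Bragg peak contributes to the variance, so writing $\varphi_{\mu}$ for the spherical function with eigenvalue $\mu$ and $S_n(o)$ for the sphere of radius $n$,
\[
\Var\big(|\cP_{\mathfrak{K}_4}\cap B_r(o)|\big)=m\,\big|\widehat{\mathbf 1_{B_r(o)}}(-1)\big|^{2},\qquad \widehat{\mathbf 1_{B_r(o)}}(-1)=\sum_{n=0}^{r}|S_n(o)|\,\varphi_{-1}(n).
\]

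\textbf{Asymptotics of the spherical transform of a ball.} Write $-1=2\sqrt 2\cos\theta$ with $\theta\in(0,\pi)$, so $\cos\theta=-\tfrac{1}{2\sqrt 2}$. The standard large-$n$ asymptotics of spherical functions on regular trees give
\[
\varphi_{-1}(n)=\mathbf c(\theta)\,2^{-n/2}e^{in\theta}+\overline{\mathbf c(\theta)}\,2^{-n/2}e^{-in\theta}+o\big(2^{-n/2}\big),
\]
with $\mathbf c(\theta)\neq 0$ on $(0,\pi)$; equivalently $r\mapsto\widehat{\mathbf 1_{B_r(o)}}(-1)$ solves a fixed linear recursion whose dominant characteristic roots are $2^{1/2}e^{\pm i\theta}$. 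Since $|S_n(o)|=3\cdot 2^{n-1}$ for $n\geq1$, I would substitute, sum the geometric series, and check that the $o(2^{-n/2})$ remainder contributes only $o(2^{r/2})$ after summation; this yields constants $E>0$ and $\gamma\in\bR$ with $\widehat{\mathbf 1_{B_r(o)}}(-1)=E\,2^{r/2}\cos(r\theta+\gamma)+o(2^{r/2})$, the positivity of $E$ coming from $\mathbf c(\theta)\neq0$ (and, as a check, $E=0$ would force the number variance to be bounded, contrary to invariant point processes in regular trees never being geometrically hyperuniform). Together with $|B_r(o)|=3\cdot 2^{r}-2$ this gives, along integers $r\to\infty$,
\[
\frac{\Var\big(|\cP_{\mathfrak{K}_4}\cap B_r(o)|\big)}{|B_r(o)|}=C\cos^{2}(r\theta+\gamma)+o(1),\qquad C:=\tfrac{1}{3}mE^{2}>0.
\]

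\textbf{Irrationality of the angle and conclusion.} Here $2\cos\theta=-\tfrac{1}{\sqrt 2}$ has minimal polynomial $2t^{2}-1$ over $\bQ$, which is not monic, so $2\cos\theta$ is not an algebraic integer; since $2\cos(\pi p/m)=\zeta+\zeta^{-1}$ is a sum of roots of unity, hence an algebraic integer, for every rational $p/m$, it follows that $\theta/\pi$ is irrational. By Weyl equidistribution, $\big(r\theta+\gamma\bmod\pi\big)_{r\geq0}$ is dense in $\bR/\pi\bZ$, so there are integers $r_k\to\infty$ with $\cos(r_k\theta+\gamma)\to0$; along $(r_k)$ the displayed ratio tends to $0$, giving $\liminf_{r\to\infty}\Var(|\cP_{\mathfrak{K}_4}\cap B_r(o)|)/|B_r(o)|=0$. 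The hard part will be the second step: proving the oscillatory asymptotics for $\widehat{\mathbf 1_{B_r(o)}}(-1)$ with an error that is genuinely $o(2^{r/2})$ and an amplitude $E$ that is provably nonzero, so that the leading term — and hence its square — really collapses to $o(2^{r})$ exactly along the subsequence on which the cosine is small; the remaining steps are routine given the diffraction framework and the elementary algebraic-integer argument.
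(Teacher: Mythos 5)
Your proposal is correct and follows essentially the same route as the paper: the diffraction of $\mu_{\frak{K}_4}$ reduces to a single non‑trivial atom at $\lambda_1=\arccos_2(-\tfrac{1}{2\sqrt{2}})$, the variance ratio becomes an explicit trigonometric expression in $r$ (the paper's Lemma \ref{LemmaFTofIndicatorFunction} gives this \emph{exactly}, so the ``hard part'' you flag — the $o(2^{r/2})$ error and $E\neq 0$ — is immediate, with $E^2$ proportional to $1+2q^{-1/2}\cos_q(\lambda_1)+q^{-1}>0$), and irrationality of $\lambda_1$ as a multiple of $\pi/\log 2$ together with density of the irrational rotation orbit produces the vanishing subsequence. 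The only cosmetic differences are that you steer $r\theta+\gamma$ to a zero of the cosine in amplitude–phase form, whereas the paper sends $r_k\lambda_1\to\lambda_1$ and uses the identity $2\cos_2(\lambda_1)+2^{-1/2}=0$, and your algebraic-integer argument for the irrationality of $\arccos(-\tfrac{1}{2\sqrt 2})/\pi$ replaces the paper's appeal to Corollary \ref{CorollaryQuadraticIrrationalTrigonometricNumbers}.
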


\subsection{Point processes on regular trees}

Let $q \geq 2$ and consider the $(q + 1)$-regular tree $\qplusonetree$, which we will think of as a discrete metric space and let $\Radonplus(\qplusonetree)$ denote the space of positive locally finite measures on $\qplusonetree$. If $\cP$ is an $\Radonplus(\qplusonetree)$-valued random variable then the associated \emph{(weighted) point process} $\mu = \mu_{\cP}$ will to us simply be the law of $\cP$. A point process $\mu$ is \emph{invariant} if it is invariant under the action of the automorphism group $G_q = \Aut(\qplusonetree)$, and \emph{locally square-integrable} if 
\begin{align*}
\int_{\Radonplus(\qplusonetree)} p(\{x\})^2 d\mu(p) < +\infty
\end{align*}
for every $x \in \qplusonetree$. To study invariant locally square-integrable point processes $\mu$ one introduces for each finitely supported function $f : \qplusonetree \rightarrow \C$ a \emph{linear statistic} $\bS f : \Radonplus(\qplusonetree) \rightarrow \C$ by
\begin{align*}
\bS f (p) = \int_{\qplusonetree} f(x) \, dp(x) = \sum_{x \in \qplusonetree} p(\{x\}) f(x) \, . 
\end{align*}
Many interesting point processes are uniquely determined by the associated moments of these linear statistics, for example random lattice orbits, Poisson point processes and determinantal point processes. In the notation of the previous subsection, 
$$\Var(|\cP \cap B_r(o)|) = \Var_{\mu_{\cP}}(\bS\chi_{B_r(o)}) \, , $$
where $\chi_{B_r(o)}$ denotes the indicator function of the ball $B_r(o)$.

\subsection{The diffraction measure of a point process}

In order to do (spherical) harmonic analysis on regular trees, we identify $\qplusonetree$ with the homogeneous space $G_q/K_q$ where $G_q = \Aut(\qplusonetree)$ is the group of automorphisms of $\qplusonetree$ and $K_q$ is the stabilizer subgroup of a fixed root $o \in \qplusonetree$. On $G_q$ there is the \emph{spherical transform}, available for functions in the \emph{Hecke algebra} $\sH(G_q, K_q)$ of bi-$K_q$-invariant compactly supported functions $\varphi : G_q \rightarrow \C$, given by
\begin{align*}
\hat{\varphi}(\lambda) = \int_{G_q} \varphi(g) \omega_{\lambda}(g) dm_{G_q}(g) \, , \quad \lambda \in \C \, .
\end{align*}
Here $\omega_{\lambda}$ are the \emph{$K_q$-spherical functions} for $G_q$, 
\begin{align*}
\omega_{\lambda}(g) = c_q(\lambda) q^{-(1/2 - i\lambda)d(g.o, o)} + c_q(-\lambda) q^{-(1/2 + i\lambda)d(g.o, o)} \, ,
\end{align*}
and $c_q(\lambda) = \tfrac{1}{q^{1/2} + q^{-1/2}} \tfrac{q^{1/2 + i\lambda} - q^{-1/2 - i\lambda}}{q^{i\lambda} - q^{-i\lambda}}$ is the Harish-Chandra $c$-function for $G_q$.  

The \emph{diffraction measure} of an invariant locally square-integrable point process $\mu$ on $\qplusonetree$ is the unique positive finite Borel measure $\sigma_{\mu}$ supported on 
\begin{align*}
\Lambda_q = i[0, \tfrac{1}{2}] \cup (0, \tfrac{\pi}{\log(q)}) \cup (\tfrac{\pi}{\log(q)} + i[0, \tfrac{1}{2}]) \subset \C
\end{align*}
such that for all $\varphi_1, \varphi_2 \in \sH(G_q, K_q)$,
\begin{align*}
\langle \bS\varphi_1, \bS\varphi_2 \rangle_{L^2(\mu)} = \int_{\Lambda_q} \hat{\varphi}_1(\lambda) \overline{\hat{\varphi}_2(\lambda)} d\sigma_{\mu}(\lambda) \, . 
\end{align*}
The existence and uniqueness of this measure holds in the generality of random measures in commutative spaces $X = G/K$ associated to a locally compact second countable Gelfand pair $(G, K)$ with $K < G$ compact. These spaces include regular trees, Euclidean spaces, (higher rank) symmetric spaces as well as Bruhat-Tits buildings and products of such. We prove this in upcoming work with M. Björklund.
\begin{remark}[Connection to spectral graph theory]
There are some connections to representations and the spectrum of graphs to point out here.
\begin{enumerate}
    \item The set $\Lambda_q$ parameterizes all of the \emph{positive-definite} spherical functions $\omega_{\lambda}$, which in turn give rise to all equivalence classes of $K_q$-spherical irreducible unitary representations $\pi_{\lambda}$ of $G_q$. The representations coming from $\Im(\lambda) > 0$ are the \emph{complementary series} representations and $\Im(\lambda) = 0$ yields the \emph{principal series} representations. Special cases are the trivial representation for $\lambda = i/2$, the sign representation $\sgn(d(g.o, o))$ for $\lambda = \pi/\log(q) + i/2$, and for $\lambda = 0, \pi/\log(q)$ the corresponding spherical functions are analogues of the \emph{Harish-Chandra $\Xi$-function} in the setting of semisimple Lie groups.
    \item The set $\Lambda_q$ is also in bijection with the spectrum $\sigma(\Delta_q) = [-(q + 1), q + 1]$ of the adjacency operator $\Delta_q$ on $\qplusonetree$. Explicitly, the canonical bijection is
    $$ \Lambda_q \ni \lambda \longmapsto 2\sqrt{q} \cos(\log(q)\lambda) \in \sigma(\Delta_q) \, .  $$
    \item If $\mu = \Poi$ is the unit intensity invariant Poisson point process in $\qplusonetree$, then 
    \begin{align}
    \label{EqPoissonDiffraction}
        d\sigma_{\Poi}(\lambda) = \delta_{i/2} + \chi_{[0, \tfrac{\pi}{\log(q)}]}(\lambda) \tfrac{q^{1/2}}{q^{1/2} + q^{-1/2}} \tfrac{\log(q)}{2\pi}|c_q(\lambda)|^{-2} d\lambda
    \end{align}
    where the latter measure is the Plancherel measure for $G_q$. The Plancherel measure is in turn realized on the adjacency spectrum $\sigma(\Delta_q) = [-(q + 1), q + 1]$ as the celebrated \emph{Kesten-McKay measure}
    \begin{align*}
        \chi_{[-2\sqrt{q}, 2\sqrt{q}]}(\alpha) \frac{q + 1}{2\pi} \frac{\sqrt{4q - \alpha^2}}{(q + 1)^2 - \alpha^2} d\alpha \, . 
    \end{align*}
\end{enumerate}
\end{remark}

\subsection{Number variances}
Given an invariant locally square-integrable point process $\mu$ on $\qplusonetree$, its \emph{number variance} is the function $\NVmu : \Z_{\geq 0} \rightarrow \R_{\geq 0}$ given by the variance of the linear statistic of the indicator function on a centered metric ball,
\begin{align*}
\NVmu(r) = \Varmu(\bS\chi_{B_r(o)}) = q^r \int_{\Lambda_q \backslash \{\tfrac{i}{2}\}} \frac{(\sin_q(\lambda(r + 1)) + q^{-1/2}\sin_q(\lambda r))^2}{\sin^2_q(\lambda)} d\sigma_{\mu}(\lambda) \, .
\end{align*}
Due to the presence of $\pi/\log(q) + i/2 \in \Lambda_q$, corresponding to the spherical sign function and to the eigenvalue $-(q + 1)$ of the adjacency operator $\Delta_q$, it will be more convenient to consider what we will call the \emph{sub-oscillatory number variance} $\NVmu^* : \Z_{\geq 0} \rightarrow \R_{\geq 0}$ given by
\begin{align*}
\NVmu^*(r) = q^r \int_{\Lambda_q^*} \frac{(\sin_q(\lambda(r + 1)) + q^{-1/2}\sin_q(\lambda r))^2}{\sin^2_q(\lambda)} d\sigma_{\mu}(\lambda) \, , \quad \Lambda_q^* = \Lambda_q \backslash \{\tfrac{i}{2}, \tfrac{\pi}{\log(q)} + \tfrac{i}{2}\} \, . 
\end{align*}
In the case of random lattice orbits associated to fundamental groups of finite regular graphs, this will correspond to the removal of the eigenvalue $-(q + 1)$ of the adjacency operator when the graph is bipartite. 

\subsection{Asymptotics of number variances}

A first choice for defining geometric hyperuniformity of an invariant locally square-integrable point process $\mu$ on $\qplusonetree$ is to require that
\begin{align*}
\limsup_{r \rightarrow +\infty} \frac{\NV^*_{\mu}(r)}{|B_r(o)|} = 0  \, , 
\end{align*}
and to say that $\mu$ is geometrically hyperfluctuating if 
\begin{align*}
\limsup_{r \rightarrow +\infty} \frac{\NV^*_{\mu}(r)}{|B_r(o)|} = +\infty \, .   
\end{align*}
Similarly to the results in \cite[Theorem 1.4]{Björklund2024HyperuniformityOfRandomMeasuresOnEuclideanAndHyperbolicSpaces} for point processes in real hyperbolic spaces, we show that no such point processes $\mu$ are geometrically hyperuniform in this sense, and that the existence of complementary series diffraction yields a geometrically hyperfluctuating point process. The following can be seen as the analogue of Beck's Theorem in \cite[Theorem 2A]{BeckChenIrregularitiesOfDistribution} for regular trees.
\begin{theorem}[Non-geometric hyperuniformity]
\label{Theorem1.2}
Let $\mu$ be an invariant locally square-integrable point process in $\qplusonetree$. Then
\begin{align}
\label{EqNonGeometricHyperuniformity}
\limsup_{r \rightarrow +\infty} \frac{\NVmu^*(r)}{|B_r(o)|} > 0 \, . 
\end{align}
Moreover,
\begin{enumerate}
    \item if $\sigma_{\mu}(\{0, \tfrac{\pi}{\log(q)}\}) > 0$, then
    \begin{align*}
    \liminf_{r \rightarrow +\infty} \frac{\NVmu^*(r)}{r^2|B_r(o)|} > 0 \, . 
    \end{align*}
    \item if there is a $\delta \in (0, 1)$ such that $\sigma_{\mu}(i[\tfrac{\delta}{2}, \tfrac{1}{2})) > 0$ or $\sigma_{\mu}(\tfrac{\pi}{\log(q)} + i[\tfrac{\delta}{2}, \tfrac{1}{2})) > 0$, then
    \begin{align*}
    \liminf_{r \rightarrow +\infty} \frac{\NVmu^*(r)}{|B_r(o)|^{1 + \delta}} > 0 \, . 
    \end{align*}
\end{enumerate}
\end{theorem}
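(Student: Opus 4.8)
Everything will be read off the closed form $\NVmu^*(r)=q^r\int_{\Lambda_q^*}K_r\,d\sigma_\mu$, with the kernel
\[
K_r(\lambda)=\frac{\big(\sin_q(\lambda(r+1))+q^{-1/2}\sin_q(\lambda r)\big)^2}{\sin_q^2(\lambda)}\ \ge\ 0 .
\]
Two reductions first: since $|B_r(o)|=1+(q+1)\tfrac{q^r-1}{q-1}$ we have $q^{-r}|B_r(o)|\to\tfrac{q+1}{q-1}\in(0,\infty)$, so in each of the three assertions $|B_r(o)|$ may be replaced by $q^r$; and since $K_r$ is invariant under rescaling $\sin_q$ we may normalize $\sin_q(z)=\sin(z\log q)$. (We may also assume $\sigma_\mu(\Lambda_q^*)>0$, as otherwise $\NVmu^*\equiv0$.) The plan is then: (i) average $K_r$ over $r$ and combine with Fatou's lemma for the universal bound \eqref{EqNonGeometricHyperuniformity}; (ii) for the two ``moreover'' clauses, restrict the diffraction integral to the relevant subset of $\Lambda_q^*$ and exploit pointwise growth of $K_r$ there.

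\textbf{Step 1 (the universal bound).} On the principal spectrum put $\theta=\lambda\log q\in(0,\pi)$, so $\sin_q(\lambda n)=\sin(n\theta)$. Expanding the square in $K_r$ and using $\sin((r{+}1)\theta)\sin(r\theta)=\tfrac12\big(\cos\theta-\cos((2r{+}1)\theta)\big)$, the Cesàro means converge for each such $\lambda$,
\[
\frac1N\sum_{r=1}^N K_r(\lambda)\ \longrightarrow\ \frac{\tfrac12(1+q^{-1})+q^{-1/2}\cos\theta}{\sin^2\theta}\ \ge\ \tfrac12\big(1-q^{-1/2}\big)^2\ >\ 0 ,
\]
using $\tfrac12(1+q^{-1})-q^{-1/2}=\tfrac12(1-q^{-1/2})^2$, $\sin^2\theta\le1$, and that the remaining oscillatory terms contribute $O(1/N)$ for fixed $\theta\in(0,\pi)$. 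At $\lambda\in\{0,\tfrac{\pi}{\log q}\}$ and on the two complementary segments one instead has $K_r(\lambda)\to+\infty$ (quadratically, resp.\ exponentially, in $r$; see Step 2), so $\liminf_N\tfrac1N\sum_{r\le N}K_r(\lambda)\ge\tfrac12(1-q^{-1/2})^2$ at every point of $\Lambda_q^*$. Fatou's lemma applied to the non-negative functions $\tfrac1N\sum_{r\le N}K_r$ against $\sigma_\mu$ then yields
\[
\liminf_{N\to\infty}\frac1N\sum_{r=1}^N q^{-r}\NVmu^*(r)=\liminf_{N\to\infty}\int_{\Lambda_q^*}\Big(\tfrac1N\sum_{r=1}^N K_r\Big)d\sigma_\mu\ \ge\ \tfrac12(1-q^{-1/2})^2\,\sigma_\mu(\Lambda_q^*)\ >\ 0 ,
\]
and since the liminf of the Cesàro means of a non-negative sequence never exceeds its limsup, $\limsup_r q^{-r}\NVmu^*(r)>0$, i.e.\ \eqref{EqNonGeometricHyperuniformity}.

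\textbf{Step 2 (the two refinements).} By non-negativity of the integrand, $\NVmu^*(r)\ge q^r\int_E K_r\,d\sigma_\mu$ for any Borel $E\subseteq\Lambda_q^*$. For (1): letting $\theta\to0$ gives $K_r(0)=((r{+}1)+q^{-1/2}r)^2$ and letting $\theta\to\pi$ gives $K_r(\tfrac{\pi}{\log q})=((r{+}1)-q^{-1/2}r)^2$ (using $\sin(n\theta)\sim n\theta$, resp.\ $\sin(n\theta)\sim(-1)^n n(\theta-\pi)$), and both are $\ge(1-q^{-1/2})^2 r^2$; with $E=\{0,\tfrac{\pi}{\log q}\}$ this gives $\NVmu^*(r)\ge(1-q^{-1/2})^2\,\sigma_\mu(\{0,\tfrac{\pi}{\log q}\})\,r^2q^r$, hence $\liminf_r\NVmu^*(r)/(r^2|B_r(o)|)>0$. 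For (2): on $\lambda=it$, $t\in[\tfrac\delta2,\tfrac12)$, one has $\sin_q(\lambda n)=i\sinh(tn\log q)$, so
\[
K_r(it)=\frac{\big(\sinh((r{+}1)t\log q)+q^{-1/2}\sinh(rt\log q)\big)^2}{\sinh^2(t\log q)}\ \ge\ \frac{\sinh^2((r{+}1)t\log q)}{\sinh^2(\tfrac12\log q)}\ \ge\ c_q\,q^{2rt}
\]
for all $r\ge r_0$ and all such $t$, with $c_q>0$; with $E=i[\tfrac\delta2,\tfrac12)$ and $q^{2rt}\ge q^{\delta r}$ one gets $\NVmu^*(r)\ge c_q\,q^{(1+\delta)r}\,\sigma_\mu(i[\tfrac\delta2,\tfrac12))$, and dividing by $q^{(1+\delta)r}\asymp|B_r(o)|^{1+\delta}$ gives $\liminf_r\NVmu^*(r)/|B_r(o)|^{1+\delta}>0$. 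The case $\sigma_\mu(\tfrac{\pi}{\log q}+i[\tfrac\delta2,\tfrac12))>0$ is handled identically via $\theta=\pi+it\log q$, which replaces the inner ``$+q^{-1/2}$'' by ``$-q^{-1/2}$''; that $\sinh((r{+}1)t\log q)-q^{-1/2}\sinh(rt\log q)\sim\tfrac12 q^{rt}(q^t-q^{-1/2})>0$ for large $r$ follows from $q^t\ge q^{\delta/2}>q^{-1/2}$.

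\textbf{Expected main obstacle.} The one delicate point is the Cesàro computation in Step 1: identifying $\lim_N\tfrac1N\sum_{r\le N}K_r(\lambda)$ on the principal spectrum and verifying it is bounded below by a strictly positive constant (this is where $q>1$, through $(1-q^{-1/2})^2>0$, enters). That positivity is precisely the rigidity responsible for the failure of geometric hyperuniformity; Step 2 is then routine $\sin$/$\sinh$ bookkeeping once the kernel asymptotics are in place.
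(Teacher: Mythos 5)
Your proof is correct and follows essentially the same route as the paper: the same kernel $K_r$, the same Cesàro average of $K_r$ on the principal spectrum (your limit $\frac{(1+q^{-1})/2+q^{-1/2}\cos\theta}{\sin^2\theta}$ is exactly the paper's Lemma on the asymptotic mean of $|\hat\chi_{B_r}|^2/q^r$), and the same pointwise $r^2$ and $\sinh$ lower bounds at $\{0,\tfrac{\tau}{2}\}$ and on the complementary segments. The only (minor) divergence is that you pass to the limit via Fatou on all of $\Lambda_q^*$ where the paper restricts to $[\varepsilon,\tfrac{\tau}{2}-\varepsilon]$ and uses uniform convergence; both arguments tacitly require $\sigma_\mu(\Lambda_q^*)>0$, a degeneracy you at least flag explicitly.
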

It follows that if $\mu$ admits complementary series diffraction, then it is geometrically hyperfluctuating along \emph{all} subsequences of radii by the above Theorem. In light of this Theorem, it is natural to investigate when
\begin{align*}
\liminf_{r \rightarrow +\infty} \frac{\NVmu^*(r)}{|B_r(o)|} = 0 
\end{align*} 
or not. We provide a criterion for when atoms in the diffraction picture yield a positive lower limit. To state it, consider the set
\begin{align*}
\Lambda_q^{\mathrm{Rat}} = \begin{cases} (0, \tfrac{\pi}{\log(2)}) \cap \tfrac{\pi}{\log(2)} \Q \backslash \{\tfrac{3\pi}{4\log(2)}, \tfrac{5\pi}{12\log(2)}, \tfrac{11\pi}{12\log(2)}\} &\mbox{ if } q = 2\\
(0, \tfrac{\pi}{\log(3)}) \cap \tfrac{\pi}{\log(3)} \Q \backslash \{\tfrac{5\pi}{6\log(3)}\} &\mbox{ if } q = 3\\
(0, \tfrac{\pi}{\log(q)}) \cap \tfrac{\pi}{\log(q)}\Q &\mbox{ if } q \geq 4 \, . 
\end{cases}
\end{align*}
\begin{theorem}
\label{Theorem1.3}
Let $\mu$ be an invariant locally square-integrable point process on $\qplusonetree$ with diffraction measure $\sigma_{\mu}$, and suppose that there is a $\lambda \in \Lambda_q^{\mathrm{Rat}}$ such that $\sigma_{\mu}(\{ \lambda \}) > 0$. Then
\begin{align*}
\liminf_{r \rightarrow +\infty} \frac{\NVmu^*(r)}{|B_r(o)|} > 0 \, . 
\end{align*} 
\end{theorem}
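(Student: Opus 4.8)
The plan is to bound $\NVmu^*(r)$ from below by the contribution of the single atom of $\sigma_\mu$ at $\lambda_0$, reduce to a purely trigonometric liminf, recast that liminf via roots of unity, and then settle the resulting Diophantine question with a cyclotomic norm computation. Write $\theta = \lambda_0\log q$; by hypothesis $\theta \in (0,\pi)\cap\pi\Q$, say $\theta = \pi p/m$ with $\gcd(p,m)=1$ and $0 < p < m$. Since $\sigma_\mu$ is a positive measure and the $\lambda$-integrand of $\NVmu^*(r)$ is nonnegative on $\Lambda_q^*$, and $\lambda_0 \in (0, \tfrac{\pi}{\log q}) \subset \Lambda_q^*$ is a real point at which that integrand equals $\tfrac{(\sin((r+1)\theta) + q^{-1/2}\sin(r\theta))^2}{\sin^2(\pi p/m)}$, discarding all of $\Lambda_q^*$ except this atom gives
\begin{align*}
\NVmu^*(r) \;\geq\; q^r\,\sigma_\mu(\{\lambda_0\})\,\frac{\big(\sin((r+1)\theta) + q^{-1/2}\sin(r\theta)\big)^2}{\sin^2(\pi p/m)}\,.
\end{align*}
As $|B_r(o)| = \tfrac{(q+1)q^r - 2}{q-1}$, we have $q^r/|B_r(o)| \to \tfrac{q-1}{q+1} > 0$, so the theorem follows once we show
\begin{align*}
\liminf_{r\to+\infty}\big(\sin((r+1)\theta) + q^{-1/2}\sin(r\theta)\big)^2 \;>\; 0\,.
\end{align*}

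I would establish this via the identity $\sin((r+1)\theta) + q^{-1/2}\sin(r\theta) = \Im\!\big(e^{ir\theta}(e^{i\theta}+q^{-1/2})\big) = \rho\sin(r\theta+\phi)$, where $\rho = |e^{i\theta}+q^{-1/2}| \geq 1 - q^{-1/2} > 0$ and $\phi = \arg(e^{i\theta}+q^{-1/2}) \in (0,\pi)$. Because $\theta = \pi p/m$, the sequence $r \mapsto \sin(r\theta+\phi)$ is periodic, so its liminf is its minimum over a period, and this is strictly positive unless $\sin(r\theta+\phi) = 0$ for some integer $r$. Since $\gcd(p,m)=1$, the set $\{r\theta\bmod\pi : r\in\Z\}$ equals $\{0,\tfrac{\pi}{m},\dots,\tfrac{(m-1)\pi}{m}\}$, so such an $r$ exists precisely when $\phi \in \tfrac{\pi}{m}\Z$, i.e.\ when $(e^{i\theta}+q^{-1/2})^m \in \R$. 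Writing $\phi = \ell\pi/m$ with $\ell\in\{1,\dots,m-1\}$ and reading off the vanishing of the imaginary part of $e^{-i\ell\pi/m}(e^{i\theta}+q^{-1/2})$, this is equivalent to the existence of integers $a, b$ with $a + b = p$, $m\nmid a$, $\sin(\pi a/m)\sin(\pi b/m) > 0$, and
\begin{align*}
q\sin^2(\pi a/m) \;=\; \sin^2(\pi b/m)\,.
\end{align*}

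Hence everything reduces to the arithmetic assertion that such a relation forces $(q,p,m)$ to be one of $(2,3,4),(2,5,12),(2,11,12),(3,5,6)$ — exactly the triples deleted in the definition of $\Lambda_q^{\mathrm{Rat}}$ — and this is where the genuine work lies. The route I would take: $q = \sin^2(\pi b/m)/\sin^2(\pi a/m)$ is rational, hence fixed by $\mathrm{Gal}(\Q(\cos\tfrac{2\pi}{m})/\Q) \cong (\Z/m)^\times/\{\pm 1\}$, so $q\sin^2(\pi ac/m) = \sin^2(\pi bc/m)$ for every $c$ coprime to $m$. Taking the field norm to $\Q$ of $4\sin^2(\pi k/m) = 2 - \xi^k - \xi^{-k}$ (where $\xi = e^{2\pi i/m}$), descending to $\Q(\cos\tfrac{2\pi}{m/\gcd(k,m)})$ to handle $\gcd(k,m) > 1$, and using $\prod_{\gcd(c,n)=1}(1-\zeta_n^c) = \Phi_n(1)$ — which equals the prime $\ell$ when $n$ is a power of $\ell$ and equals $1$ when $n$ has two or more prime factors — yields
\begin{align*}
q^{\varphi(m)/2} \;=\; \Phi_{m/\gcd(b,m)}(1)^{\,\varphi(m)/\varphi(m/\gcd(b,m))}\,\Big/\,\Phi_{m/\gcd(a,m)}(1)^{\,\varphi(m)/\varphi(m/\gcd(a,m))}\,.
\end{align*}
Since $q \geq 2$, the numerator must exceed $1$, so $m/\gcd(b,m)$ is a prime power $\ell^k$ with $\varphi(\ell^k)\mid 2$, forcing $\ell^k\in\{2,3,4\}$; a short case split on whether $m/\gcd(a,m)$ is a prime power or has at least two prime factors then pins $q \in \{2,3,4\}$. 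For each such $q$, the divisibility of $\gcd(a,m)$ and $\gcd(b,m)$ in $m$ and the values $\sin^2(\pi a/m),\sin^2(\pi b/m)$ are essentially determined, and solving for $m$ under the constraints $\gcd(a+b,m)=1$ and $\sin(\pi a/m)\sin(\pi b/m)>0$ leaves precisely the four triples above; in particular $q = 4$, and therefore every $q \geq 4$, is excluded, since any solution there would force $\gcd(a+b,m) > 1$. The main obstacle I anticipate is carrying out this final finite but delicate case analysis cleanly and correctly; the two reduction steps preceding it are routine.
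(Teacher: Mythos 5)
Your proposal is correct in outline and reaches the same reduction as the paper — isolate the atom, use $q^r/|B_r(o)| \to \tfrac{q-1}{q+1}$, and show the periodic trigonometric expression never vanishes at an integer — but it resolves the key arithmetic question by a genuinely different route. The paper multiplies $\sin(\tfrac{(r+1)a\pi}{b}) + q^{-1/2}\sin(\tfrac{ra\pi}{b})$ by its conjugate to obtain a rational linear relation between two cosines of rational angles, and then invokes Berger's theorem (Theorem \ref{TheoremBerger}) to cut the problem down to $b \in \{2,3,4,5,6,10,12\}$ before a hand check. You instead write the expression as $\rho\sin(r\theta+\phi)$, observe that vanishing forces $\phi \in \tfrac{\pi}{m}\Z$, and translate this into the relation $q\sin^2(\tfrac{\pi a}{m}) = \sin^2(\tfrac{\pi b}{m})$ with $a+b=p$; the cyclotomic norm identity $q^{\varphi(m)/2} = \Phi_{n_b}(1)^{e_b}/\Phi_{n_a}(1)^{e_a}$ then does the work of Berger's theorem in a self-contained way. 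I checked your norm formula against all four exceptional triples and it is consistent, and the ensuing constraints ($\varphi(n_b) \le 2$, hence $n_b \in \{2,3,4\}$, hence $q \in \{2,3,4\}$ with $q=4$ killed by $\gcd(p,m)>1$) do terminate in a finite list. What each approach buys: yours avoids citing a nontrivial external result and generalizes more transparently to other $q$; the paper's outsources the hard Galois-theoretic content to a reference and keeps the residual computation purely mechanical.

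Two points of care for the part you left unexecuted. First, the final case analysis is not optional window dressing: the theorem's content is precisely that the exceptional set is \emph{exactly} $\{(2,\tfrac34),(2,\tfrac{5}{12}),(2,\tfrac{11}{12}),(3,\tfrac56)\}$, so until the sign condition $\sin(\tfrac{\pi a}{m})\sin(\tfrac{\pi b}{m})>0$ is checked in each residue class the proof is incomplete (e.g.\ it is this condition, not the norm identity, that rules out $\theta = \tfrac{\pi}{4}$ and $\theta=\tfrac{\pi}{6}$ for $q=2,3$). Second, when you run that check, remember that $\sin(\tfrac{\pi a}{m})$ is determined by $a$ modulo $2m$, not modulo $m$, whereas your congruence conditions on $\gcd(a,m)$ only fix $a$ modulo $m$; conflating the two is the most likely place for a spurious solution to appear or a genuine one to be lost. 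With those caveats the argument goes through and recovers the paper's list.
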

The proof requires a detailed analysis of trigonometric numbers, and the main tool is the rational linear independence of pairs of trigonometric numbers established by Berger in \cite[Theorem 1.2]{BergerOnLinearIndependenceOfTrigonometricNumbers}, extending a classical Theorem of Niven \cite[Corollary 3.12]{NivenIrrationalNumbers}.
\subsection{Random lattice orbits of fundamental groups} 
Given a finite simple connected $(q + 1)$-regular graph $\frak{X}$, its universal covering space is realized as the $(q + 1)$-regular tree $\qplusonetree$, and the fundamental group $\Gamma_{\frak{X}} < G_q$ of $\frak{X}$ is a cocompact lattice acting freely on $\qplusonetree$ by graph automorphisms. From this we obtain an invariant random lattice orbit $\mu_{\frak{X}} := \mu_{\Gamma_{\frak{X}}}$ with sub-oscillatory number variance $\NV^*_{\frak{X}} := \NV^*_{\mu_{\frak{X}}}$ and diffraction measure $\sigma_{\frak{X}} := \sigma_{\mu_{\frak{X}}}$ given by
\begin{align*}
\sigma_{\frak{X}} = \frac{1}{|\frak{X}|^2} \delta_{i/2} + \sum_{\lambda \in \Lambda_{\frak{X}}} m_{\frak{X}}(\lambda) \delta_{\lambda} \, .
\end{align*}
Here $\Lambda_{\frak{X}}$ denotes the finite subset of $\lambda \in \Lambda_q \backslash \{i/2\}$ such that $2\sqrt{q}\cos(\log(q)\lambda)$ is an eigenvalue of the adjacency operator $\Delta_{\frak{X}}$ on $\frak{X}$ and $m_{\frak{X}}(\lambda) > 0$ are multiplicity constants. Setting $\Lambda_{\frak{X}}^* = \Lambda_{\frak{X}} \cap \Lambda_q^*$ we can write the sub-oscillatory number variance as 
\begin{align*}
\NV_{\frak{X}}^*(r) = q^r \sum_{\lambda \in \Lambda_{\frak{X}}^*} m_{\frak{X}}(\lambda) \frac{(\sin_q(\lambda(r + 1)) + q^{-1/2}\sin_q(\lambda r))^2}{\sin^2_q(\lambda)} \, . 
\end{align*}
By Theorem \ref{Theorem1.2}, the random lattice orbit $\mu_{\frak{X}}$ is hyperfluctuating when $\Lambda_{\frak{X}} \cap i[0, \tfrac{1}{2}) \neq \varnothing$ or $\Lambda_{\frak{X}} \cap (\tfrac{\pi}{\log(q)} + i[0, \tfrac{1}{2})) \neq \varnothing$, and by Theorem \ref{Theorem1.3} we have that $\Lambda_{\frak{X}} \cap \Lambda_q^{\mathrm{Rat}} \neq \varnothing$ implies
\begin{align*}
\liminf_{r \rightarrow +\infty} \frac{\NV_{\frak{X}}^*(r)}{|B_r(o)|} > 0 \, . 
\end{align*}
We provide the following examples. 
\begin{theorem}
\label{Theorem1.4}
The random lattice orbit associated to
\begin{enumerate}
    \item the fundamental group $\Gamma_{\frak{K}_{q + 2}}$ of the complete $(q + 1)$-regular graph $\frak{K}_{q + 2}$ satisfies
     \begin{align*}
     \liminf_{r \rightarrow +\infty} \frac{\NV_{\frak{K}_{q + 2}}^*(r)}{|B_r(o)|} = 0 \, . 
     \end{align*}
     \item the fundamental group $\Gamma_{\frak{B}_{q + 1}}$ of the $(q + 1)$-regular complete bipartite graph $\frak{B}_{q + 1}$ satisfies
     \begin{align*}
     \liminf_{r \rightarrow +\infty} \frac{\NV_{\frak{B}_{q + 1}}^*(r)}{|B_r(o)|} > 0 \, . 
     \end{align*}
     \item the fundamental group $\Gamma_{\frak{P}_{3}}$ of the $3$-regular Petersen graph $\frak{P}_3$ satisfies
     \begin{align*}
     \liminf_{r \rightarrow +\infty} \frac{\NV_{\frak{P}_{3}}^*(r)}{|B_r(o)|} = 0 \, . 
     \end{align*}
\end{enumerate}
\end{theorem}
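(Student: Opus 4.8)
The plan is to reduce each assertion to elementary facts about the finitely many spectral frequencies $\theta_{\lambda} := \lambda\log q$ (normalised so that $\sin_q(\lambda n) = \sin(n\theta_{\lambda})$ for $n \in \Z$) attached to the adjacency eigenvalues of the graph, together with Weyl equidistribution and a Niven-type irrationality criterion. First I would record the eigenvalue data. The complete graph $\frak{K}_{q+2}$ has adjacency eigenvalues $q+1$ and $-1$; the complete bipartite graph $\frak{B}_{q+1}$ has $q+1$, $-(q+1)$ and $0$; and the Petersen graph $\frak{P}_3$, for which $q = 2$, has $3$, $1$ and $-2$. Under the bijection $\lambda \mapsto 2\sqrt q\cos\theta_{\lambda}$ the Perron eigenvalue $q+1$ corresponds to $\tfrac{i}{2}$ and $-(q+1)$ to $\tfrac{\pi}{\log q} + \tfrac{i}{2}$, neither of which lies in $\Lambda^*_q$; the remaining eigenvalues give $\Lambda^*_{\frak{K}_{q+2}} = \{\lambda_0\}$ with $\cos\theta_{\lambda_0} = -\tfrac{1}{2\sqrt q}$, $\Lambda^*_{\frak{B}_{q+1}} = \{\tfrac{\pi}{2\log q}\}$, and $\Lambda^*_{\frak{P}_3} = \{\lambda_1,\lambda_2\}$ with $\cos\theta_{\lambda_1} = \tfrac{1}{2\sqrt 2}$ and $\theta_{\lambda_2} = \tfrac{3\pi}{4}$.

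Next I would put the numerator of each $\lambda$-summand into phase-amplitude form: for a fixed real $\lambda$ in the open principal series,
\[
\sin_q(\lambda(r+1)) + q^{-1/2}\sin_q(\lambda r) = R_{\lambda}\sin(r\theta_{\lambda} + \psi_{\lambda}), \qquad R_{\lambda}^2 = 1 + q^{-1} + 2q^{-1/2}\cos\theta_{\lambda},
\]
with a fixed phase $\psi_{\lambda}$. Combined with $|B_r(o)| = (1+o(1))\tfrac{q+1}{q-1}q^r$, the number-variance quotient becomes
\[
\frac{\NV^*_{\frak{X}}(r)}{|B_r(o)|} = (1+o(1))\,\frac{q-1}{q+1}\sum_{\lambda\in\Lambda^*_{\frak{X}}}\frac{m_{\frak{X}}(\lambda)\,R_{\lambda}^2}{\sin^2_q(\lambda)}\,\sin^2(r\theta_{\lambda}+\psi_{\lambda}),
\]
so the task is to bound this trigonometric sum below (for (2)) or to find an unbounded set of radii on which every summand is simultaneously small (for (1) and (3)). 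Part (2) is then immediate: with $\theta_{\lambda}=\tfrac{\pi}{2}$ the numerator cycles through $\pm 1$ and $\pm q^{-1/2}$ as $r$ runs modulo $4$, so $\sin^2(r\theta_{\lambda}+\psi_{\lambda})$ is bounded below by a positive constant and hence so is the quotient; equivalently, $\tfrac{\pi}{2\log q}\in\Lambda_q^{\mathrm{Rat}}$ for every $q\geq 2$, so Theorem \ref{Theorem1.3} applies directly.

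For (1) there is a single frequency, and the crux is that $\theta_{\lambda_0}/\pi$ is irrational: if $\theta_{\lambda_0}=\pi a/b$ then $2\cos\theta_{\lambda_0}$ would equal $\zeta+\zeta^{-1}$ for a root of unity $\zeta$, hence an algebraic integer, whereas $2\cos\theta_{\lambda_0}=-q^{-1/2}$ is a root of $qX^2-1$ and is not an algebraic integer for any $q\geq 2$. By Weyl equidistribution $\{r\theta_{\lambda_0}\bmod\pi\}_{r\geq 0}$ is equidistributed, so for every $\varepsilon>0$ there are infinitely many $r$ with $\sin^2(r\theta_{\lambda_0}+\psi_{\lambda_0})<\varepsilon^2$; on those radii the quotient is $O(\varepsilon^2)$, whence $\liminf=0$. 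For (3) the new feature is the rational frequency $\theta_{\lambda_2}=\tfrac{3\pi}{4}$; a short periodicity check (the relevant sine values repeat with period $8$ in $r$) shows that $\sin_q(\lambda_2(r+1))+q^{-1/2}\sin_q(\lambda_2 r)=0$ exactly when $r\equiv 2\pmod 4$. This is precisely why $\tfrac{3\pi}{4\log 2}$ is one of the points excluded from $\Lambda_2^{\mathrm{Rat}}$, so Theorem \ref{Theorem1.3} gives no obstruction. Restricting to the progression $r=4s+2$ annihilates the $\lambda_2$-summand identically, leaving only the $\lambda_1$-summand, whose argument is $4s\,\theta_{\lambda_1}+2\theta_{\lambda_1}\bmod\pi$; since $\theta_{\lambda_1}/\pi$ is irrational (same criterion: $2\cos\theta_{\lambda_1}=2^{-1/2}$ is not an algebraic integer), so is $4\theta_{\lambda_1}/\pi$, and running the equidistribution argument inside this progression makes the $\lambda_1$-summand arbitrarily small along infinitely many $r\equiv 2\pmod 4$. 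Hence $\NV^*_{\frak{P}_3}(r)/|B_r(o)|\to 0$ along a subsequence, i.e.\ $\liminf=0$.

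The analytic reduction, namely the phase-amplitude rewriting and the volume asymptotics, is routine; the substance is arithmetic. For (1) and (3) everything rests on the frequencies $\theta_{\lambda_0}$ and $\theta_{\lambda_1}$ being irrational multiples of $\pi$, which is the algebraic-integer version of Niven's theorem. The subtler point, and the main obstacle, is the Petersen graph: one must correctly identify that its unique rational frequency $\tfrac{3\pi}{4}$ produces a genuine zero of the numerator along a full arithmetic progression rather than merely a small value, so that it decouples from the irrational frequency $\theta_{\lambda_1}$ instead of forcing a positive lower limit, and one must check that this is consistent with the exceptional set defining $\Lambda_2^{\mathrm{Rat}}$. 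Getting that periodicity bookkeeping exactly right is where care is needed.
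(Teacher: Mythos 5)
Your proposal is correct and follows essentially the same route as the paper: Theorem \ref{Theorem1.3} for the bipartite case, irrationality of the frequency plus equidistribution for the complete graph, and for the Petersen graph the key observation that the rational frequency $\tfrac{3\pi}{4}$ vanishes identically along $r\equiv 2\pmod 4$, after which equidistribution handles the remaining irrational frequency. The only cosmetic differences are that you certify irrationality via the algebraic-integer criterion rather than the paper's Niven-type degree computation (Corollary \ref{CorollaryQuadraticIrrationalTrigonometricNumbers}), and you make the surviving summand arbitrarily small by equidistribution where the paper steers the subsequence to a limit point at which the summand is exactly zero via a trigonometric identity.
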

\begin{remark}
The set of non-trivial atoms for the diffraction measure in each of the cases in Theorem \ref{Theorem1.4} are
\begin{align*}
\Lambda_{\frak{X}} = \begin{cases} 
\{ \tfrac{1}{\log(q)} \arccos(- \tfrac{1}{2\sqrt{q}}) \} &\mbox{ if } \frak{X} = \frak{K}_{q+1} \\ 
\{ \tfrac{\pi}{2\log(q)}, \tfrac{\pi}{\log(q)} + \tfrac{i}{2} \} &\mbox{ if } \frak{X} = \frak{B}_{q+1} \\ 
\{ \tfrac{1}{\log(2)} \arccos(\tfrac{1}{2\sqrt{2}}), \frac{3\pi}{4\log(2)} \} &\mbox{ if } \frak{X} = \frak{P}_{3} \, .
\end{cases}
\end{align*}
In the proof of Theorem \ref{Theorem1.4}, we use equidistribution of irrationals on the unit circle for (1), Theorem \ref{Theorem1.3} for (2) since $\tfrac{\pi}{2\log(q)} \in \Lambda_q^{\mathrm{Rat}}$, and a similar equidistribution argument for (3). Note that the Peteresen graph $\frak{P}_3$ in (3) admits the atom $\frac{3\pi}{4\log(2)}$, which is on the list of exceptional rational multiples of $\tfrac{\pi}{\log(2)}$ outside $\Lambda_2^{\mathrm{Rat}}$ in Theorem \ref{Theorem1.3}.
\end{remark}
\begin{remark}
For case (2) in the above Theorem, the fundamental group of the bipartite graph $\frak{B}_{q + 1}$ is maximally hyperfluctuating with respect ordinary number variance as $\tfrac{\pi}{\log(q)} + \tfrac{i}{2} \in \Lambda_q$ is an atom of the diffraction measure $\sigma_{\frak{B}_{q+1}}$, yielding
\begin{align*}
     \liminf_{r \rightarrow +\infty} \frac{\NV_{\frak{B}_{q + 1}}(r)}{|B_r(o)|^2} > 0 \, . 
\end{align*}
\end{remark}

\subsection{Spectral hyperuniformity and stealth}

We define spectral hyperuniformity and stealth with respect to the diffraction measure $\sigma_{\Poi}$ of the unit intensity invariant Poisson point process from Equation \ref{EqPoissonDiffraction}. One can show that, for small $\varepsilon > 0$ we have 
$$\sigma_{\Poi}((0, \varepsilon] ) \asymp \varepsilon^3 \quad  \mbox{ and } \quad \sigma_{\Poi}([ \tfrac{\pi}{\log(q)} - \varepsilon, \tfrac{\pi}{\log(q)})) \asymp \varepsilon^3 \, . $$
\begin{definition}[Spectrally hyperuniform and stealthy point processes]
Let $\mu$ be an invariant locally square-integrable point process on $\qplusonetree$. We say that
\begin{itemize}
    \item $\mu$ is \emph{spectrally hyperuniform} if $\sigma_{\mu}(i[0, \tfrac{1}{2})) = \sigma_{\mu}(\tfrac{\pi}{\log(q)} + i[0, \tfrac{1}{2})) = 0$ and
    \begin{align*}
        \limsup_{\varepsilon \rightarrow 0^+} \varepsilon^{-3} \sigma_{\mu}\Big( (0, \varepsilon] \cup [ \tfrac{\pi}{\log(q)} - \varepsilon, \tfrac{\pi}{\log(q)}) \Big) = 0
    \end{align*}
    \item $\mu$ is \emph{stealthy} if $\sigma_{\mu}(i[0, \tfrac{1}{2})) = \sigma_{\mu}(\tfrac{\pi}{\log(q)} + i[0, \tfrac{1}{2})) = 0$ and there is an $\varepsilon_o > 0$ such that
    \begin{align*}
        \sigma_{\mu}\Big( (0, \varepsilon_o] \cup [ \tfrac{\pi}{\log(q)} - \varepsilon_o, \tfrac{\pi}{\log(q)}) \Big) = 0 \, . 
    \end{align*}
\end{itemize}
\end{definition}
Stealthy point processes are spectrally hyperuniform. The random lattice orbits in Theorem \ref{Theorem1.4} are all examples of stealthy point processes in $\qplusonetree$. More generally, a finite simple connected $(q + 1)$-regular graph $\frak{X}$ is \emph{Ramanujan} if the eigenvalues of the adjacency operator $\Delta_{\frak{X}}$ on $\frak{X}$, with the exception of $\pm (q + 1)$, are confined to $[-2\sqrt{q}, 2\sqrt{q}]$, meaning that the diffraction measure $\sigma_{\frak{X}}$ vanishes on the complementary series,
$$ \sigma_{\frak{X}}(i(0, \tfrac{1}{2})) = \sigma_{\frak{X}}(\tfrac{\pi}{\log(q)} + i(0, \tfrac{1}{2})) = 0  \, . $$
Thus the random lattice orbit $\mu_{\frak{X}}$ of the fundamental group $\Gamma_{\frak{X}}$ is stealthy if and only if $\frak{X}$ is Ramanujan and $\pm2\sqrt{q}$ is not an eigenvalue of the adjacency operator $\Delta_{\frak{X}}$. Excluding the eigenvalue $-(q + 1)$ of $\Delta_q$, or equivalently the spherical parameter $\pi/\log(q) + i/2 \in \Lambda_q$, is of particular importance since there are infinitely many regular bipartite Ramanujan graphs in every degree of any number of vertices as proven by Marcus, Spielman and Srivastava in \cite[Theorem 5.3]{SrivastavaInterlacingFamiliesI} and \cite[Theorem 1.2]{SrivastavaInterlacingFamiliesIV}. In fact, the graphs constructed in \cite[Theorem 1.2]{SrivastavaInterlacingFamiliesIV} result in stealthy random lattice orbits according to our definition. We take the stance that the random lattice orbits of the fundamental groups associated to such bipartite graphs should be stealthy rather than maximally hyperfluctuating.

We end this Introduction by posing a question regarding random lattice orbits of fundamental groups.

\textbf{Question}: \textit{What class of Ramanujan graphs $\frak{X}$ satisfy
\begin{align*}
\liminf_{r \rightarrow +\infty} \frac{\NV_{\frak{X}}^*(r)}{|B_r(o)|} = 0 \, ?
\end{align*}
}

\subsection{Structure of the paper} In Section \ref{The automorphism group of a regular tree} we define Schreier coset graphs associated to tdlcsc groups and realize the tree $\qplusonetree$ as a Schreier graph. We moreover provide a Cartan decomposition and a Haar measure for the automorphism group $\Aut(\qplusonetree)$. In Section \ref{Spherical harmonic analysis on regular trees} we introduce the Hecke algebra, spherical functions, the spherical transform and compute the spherical transform of the indicator function of a ball. In Section \ref{Invariant point processes, diffraction and number variances} we define invariant locally square-integrable point processes, diffraction measures and number variances. Theorem \ref{Theorem1.2} is proved in Section \ref{Non-geometric hyperuniformity} and Theorem \ref{Theorem1.3} is proved in Section \ref{Atoms in the diffraction picture}. We realize the universal cover of a finite regular graph and define fundamental groups in Section \ref{Hyperuniformity of fundamental groups of finite regular graphs}. There we also compute the diffraction of the random lattice orbit associated with a fundamental group and prove Theorem \ref{Theorem1.4}. 

\subsection{Notation}

We will write $\tau = 2\pi/\log(q)$ for integers $q \geq 2$. We will also consider $\tau$-periodic trigonometric functions and hyperbolic analogues, which we denote by 
$$\cos_q(\lambda) = \cos(\log(q)\lambda) = \frac{q^{i\lambda} + q^{-i\lambda}}{2} $$ 
and similarly for $\sin_q, \tan_q, \cot_q, \cosh_q, \sinh_q, \tanh_q, \coth_q$ and so on. The inverse functions are
$$\arccos_q(\alpha) = \tfrac{1}{\log(q)} \arccos(\alpha)$$ 
and similarly for $\arcsin_q, \arctan_q, \mathrm{arccot}_q, \arccosh_q, \arcsinh_q, \arctanh_q, \mathrm{arccoth}_q$ and so on.

\subsection{Acknowledgement} This paper is part of the author’s doctoral thesis at the
University of Gothenburg under the supervision of Michael Björklund. The author is grateful to Björklund for posing the question answered in Theorem \ref{Theorem1.1} and for valuable discussions during the writing of this paper. The author would also like to thank Adam Keilthy and Mykola Pochekai for useful discussions regarding the Appendix.


\begin{spacing}{0.1}
\tableofcontents
\end{spacing}

\section{The automorphism group of a regular tree}
\label{The automorphism group of a regular tree}

In Subsection \ref{Automorphism groups of graphs} we survey digraphs and their automorphism groups. We construct such digraphs from totally disconnected groups in Subsection \ref{Countable simple Schreier graphs} as Schreier coset graphs, the main example being regular trees. We proceed by introducing the Cartan decomposition, relevant function spaces and invariant measures in Subsections \ref{Sections and Cartan decomposition of Aut(Td)}, \ref{Function spaces over regular trees} and \ref{Invariant measures}. 

\subsection{Automorphism groups of graphs}
\label{Automorphism groups of graphs}

A \emph{directed graph} or \emph{digraph} is a pair $\frak{X} = (V_{\frak{X}}, E_{\frak{X}})$ where $V_{\frak{X}}$ is a finite or countable set of \emph{vertices} and $E_{\frak{X}} \subset V_{\frak{X}} \times V_{\frak{X}}$ is a subset of (directed) \emph{edges}. Such a digraph $\frak{X}$ is 
\begin{itemize}
    \item \emph{undirected} if $(x, y) \in E_{\frak{X}}$ implies $(y, x) \in E_{\frak{X}}$,
    \item \emph{loop-free} if $(x, x) \notin E_{\frak{X}}$ for all $x \in V_{\frak{X}}$,
    \item \emph{simple} if it is undirected and loop-free,
    \item \emph{regular} if the digraph is simple and there is a $d \in \N$ such that
    for every $x \in V_{\frak{X}}$,
    $$ \deg(x) := |\{ y \in V_{\frak{X}} \, | \, (x, y) \in E_{\frak{X}}\}| = d \, . $$
    If such a $d$ exists, we say that $\frak{X}$ is \emph{$d$-regular}. 
    \item \emph{connected} if for every pair of vertices $x, y \in V_{\frak{X}}$ there are vertices $x = v_1, ..., v_m = y$ such that $(v_i, v_{i + 1}) \in E_{\frak{X}}$ for all $i = 1, ..., m - 1$.
\end{itemize}
We will restrict our attention to simple regular connected digraphs $\frak{X}$, unless told otherwise, and refer to them as \emph{simple regular connected graphs}. 

Given a simple connected graph $\frak{X}$ we may regard it as a discrete metric space $(V_{\frak{X}}, d_{\frak{X}})$, where $d_{\frak{X}}(x, y)$ is the minimal length of all paths between vertices $x$ and $y$. For such graphs we will abbreviate $v \in \frak{X}$ for $v \in V_{\frak{X}}$. The set of edges can then be recovered as $E_{\frak{X}} = d_{\frak{X}}^{-1}(\{1\})$. The metric ball centered at a vertex $x \in \frak{X}$ of radius $r \in \Z_{\geq 0}$ is
\begin{align*}
B_r(x) = \{ y \in V_{\frak{X}} \, | \, d_{\frak{X}}(x, y) \leq r \}  \, . 
\end{align*}
%
%
A \emph{digraph homomorphism} between two digraphs $\frak{X}_1, \frak{X}_2$ is a map $g : V_{\frak{X}_1} \rightarrow V_{\frak{X}_2}$ such that $(g(x), g(y)) \in E_{\frak{X}_2}$ whenever $(x, y) \in E_{\frak{X}_1}$, and we denote such a digraph homomorphism by $g : \frak{X}_1 \rightarrow \frak{X}_2$. A \emph{digraph isomorphism} is a bijective digraph homomorphism whose inverse is a digraph homomorphism, and a digraph isomorphism from a digraph to itself is called a \emph{digraph automorphism}. The group of digraph automorphisms of a digraph $\frak{X}$ will be denoted by $\Aut(\frak{X})$, and we note that if $\frak{X}$ is simple then this set coincides with the group of isometries of the metric space $(V_{\frak{X}}, d_{\frak{X}})$. We will denote the (left) action of $\Aut(\frak{X})$ on the vertex set $V_{\frak{X}}$ by $g.x := g(x)$. A digraph $\frak{X}$ is \emph{vertex-transitive} if $\Aut(\frak{X})$ acts transitively on $V_{\frak{X}}$. In the vertex-transitive case we topologize $\Aut(\frak{X})$ by declaring for every finite subset $F \subset V_{\frak{X}}$ the basic open subset
\begin{align*}
\cO(F) = \Big\{g \in \Aut(\frak{X}) \, \Big| \, g.x = x \quad \forall \, x \in F \Big\} \, .  
\end{align*}
Then $\cO(F)$ is a closed subgroup. It is moreover compact, for if $g_n \in \cO(F)$ is an unbounded sequence and $x \in F$ then there is a $y \in V_{\frak{X}}$ such that $(g_n.y)_n$ is unbounded, so that 
$$ d_{\frak{X}}(x, y) = d_{\frak{X}}(x, g_n.y) \rightarrow +\infty $$
as $n \rightarrow +\infty$, a contradiction. In particular, the stabilizer subgroup $\cO(\{x\})$ is a maximal compact open subgroup in $\Aut(\frak{X})$ for every vertex $x \in V_{\frak{X}}$. In total we have that $\Aut(\frak{X})$ is a totally disconnected locally compact second countable (tdlcsc) group.

\subsection{Countable simple Schreier graphs}
\label{Countable simple Schreier graphs}

Let $G$ be a tdlcsc group with identity element $e$ and suppose that $K < G$ is a compact open subgroup, so that $G/K$ is at most countable. If $S \subset K \backslash G / K$ is finite, then we define the \emph{Schreier digraph} $\frak{X} = \Sch(G, K, S)$ to be the graph with vertices $V_{\frak{X}} = G/K$ and edges
\begin{align*}
E_{\frak{X}} = \Big\{ (gK,hK) \in G/K \times G/K \, \Big| \, Kg^{-1}hK \in S \Big\} \, . 
\end{align*}
Note that such a graph $\frak{X}$ is connected and vertex-transitive under the action of $G$. We will make the assumption that $S = S^{-1}$, so the graph $\frak{X}$ is undirected, and also that $KeK \notin S$, meaning that $\frak{X}$ is simple. Enumerating the members of $S$ by $Ks_1K, Ks_2K, \dots, Ks_{N}K$ with $N = |S|$, then for each $i \in \{ 1, 2, \dots, N \}$ there is an $n_i \in \N$ and distinct elements $t_{i1}, t_{i2}, \dots, t_{in_i} \in G_q$ such that
\begin{align}
\label{EqKKDoubleCosetDecomposition}
Ks_iK = \bigsqcup_{j = 1}^{n_i} t_{ij}K \, . 
\end{align}
Also, the cosets $t_{ij}K$ are unique up to left translation by $K$, and $K$ acts transitively on $T_i = \{ t_{ij}K : j = 1, \dots, n_i\}$ by permutations.   
Note that the degree of a vertex $gK \in V_{\frak{X}}$ is by the decomposition in Equation \ref{EqKKDoubleCosetDecomposition}
\begin{align*}
\deg(gK) &= \sum_{i = 1}^{N} |\{ hK \in G/K \, | \, Kg^{-1}hK = Ks_iK \}| = \sum_{i = 1}^{N} n_i \, . 
\end{align*}
In total, the Schreier graphs of the described form are simple, connected and regular of degree $d = \sum_{i = 1}^{N} n_i$.

Conversely, if $\frak{X}$ is a simple, connected, vertex-transitive $d$-regular graph with at most countably many vertices and we fix a vertex $o \in V_{\frak{X}}$, then setting $G = \Aut(\frak{X})$, $K = \Stab_{G}(o)$ and 
\begin{align*}
S = \Big\{ KsK \in K \backslash G / K \, \Big| \, (o, s.o) \in E_{\frak{X}} \Big\}
\end{align*}
realizes $\frak{X}$ as the Schreier graph of $(G, K, S)$ by the above construction.

\begin{example}[Regular trees]
\label{ExampleRegularTrees}
Let $q \geq 2$ and consider the $(q + 1)$-regular tree $\qplusonetree$ with root vertex $o$. Fix a geodesic $(x_n \in \qplusonetree : n \in \Z)$, in other words a bi-infinite sequence of distinct adjacent vertices, such that $x_0 = o$. Set $G_q = \Aut(\qplusonetree), K_q = \Stab_{G_q}(o)$ and let $S_q = \{K_qs_oK_q\}$ where $s_o \in G_q$ is the unique automorphism satisfying $s_o.x_i = x_{i+1}$ for all $i \in \Z$. Then $G_q$ is tdlcsc, $K_q$ is open and compact in $G_q$ and 
\begin{align}
\label{EquationKKDoubleCosetDecompositionFortheRegularTree}
K_qs_oK_q = \bigsqcup_{i = 1}^{q + 1} t_iK_q
\end{align}
where $t_1, \dots t_{q + 1} \in G_q$ are elements such that $t_i.o$ is adjacent to $o$ for every $i = 1, \dots, q + 1$, thus parameterising every neighbouring vertex of $o$. In particular, since $s_o^{-1}.o$ is adjacent to $o$, then $K_qs_o^{-1}K_q = K_qs_oK_q$. We conclude that $\qplusonetree = \Sch(G_q, K_q, S_q)$. Moreover, note that $K_qs_o^nK_q$ considered as a subset of $V_{\qplusonetree} = G_q/K_q$ is precisely the metric sphere $\partial B_n(o)$ of vertices of distance $n \in \Z_{\geq 0}$ to the root $o$.
\end{example}
\begin{remark}[Regular trees as spaces of $\Z_p$-modules over $\Q_p$]
When $q = p$ for some prime $p$, there is a smaller group of isometries of a linear algebraic nature acting transitively on $\bT_{p+1}$. The set of equivalence classes of rank 2 $\Z_p$-modules in the $p$-adic vector space $\Q_p^2$ up to homothety form a $p$-regular tree by declaring edges to be pairs $([\Lambda_1], [\Lambda_2])$ satisfying $p\Lambda_1 \subset \Lambda_2 \subset \Lambda_1$. The projective general linear group $\PGL_2(\Q_p)$ acts transitively on this tree and defines a subgroup of $G_p$ with stabilizer $\PGL_2(\Z_p)$ in $K_p$. Thus for $p$ prime, we can identify the tree $\bT_{p + 1} = G_p/K_p$ with $\PGL_2(\Q_p)/\PGL_2(\Z_p)$. In this sense one can view regular trees as combinatorial/non-Archimedean analogues of the hyperbolic plane.
\end{remark}
We will in the following sections restrict our attention to the $(q + 1)$-regular tree $\qplusonetree$ and return to other graphs in Section \ref{Hyperuniformity of fundamental groups of finite regular graphs}.

\subsection{Sections and Cartan decomposition of $\Aut(\bT_d)$}
\label{Sections and Cartan decomposition of Aut(Td)}

Let $(\qplusonetree, o)$ denote the rooted $(q + 1)$-regular tree, let $d : \qplusonetree \times \qplusonetree \rightarrow \Z_{\geq 0}$ be the canonical graph metric and let $G_q, K_q, S_q$ be as in Example \ref{ExampleRegularTrees}. By the decomposition in Equation \ref{EquationKKDoubleCosetDecompositionFortheRegularTree}, the stabilizer $K_q$ of the root $o$ acts transitively on the spheres $\partial B_r(o)$ for every $r \geq 0$, so for every $x \in V_{\qplusonetree}$ we can find a rotation $k_x \in K_q$ such that 
$$x = k_xs_o^{d(x, o)}.o \, . $$
The map $\varsigma : \qplusonetree \rightarrow G_q$ given by $\varsigma(x) = k_xs_o^{d(x, o)}$ is then a continuous section of the orbit map $G_q \rightarrow \qplusonetree$. With the existence of such a section, we arrive at the \emph{Cartan decomposition} 
\begin{align} 
\label{EquationCartanDecomposition}
G_q = K_q A_q^+ K_q \, ,
\end{align}
where $A_q^+ = \{s_o^n \in G_q : n \in \Z_{\geq 0}\}$. The Cartan subgroup $A_q = \{ s_o^n \in G_q : n \in \Z\}$ retrieves the fixed geodesic $(x_n \in \qplusonetree : n \in \Z)$ used to define $s_o \in G_q$ in Example \ref{ExampleRegularTrees} by $s_o^n.o = x_n$ for all $n \in \Z$. An important consequence of the Cartan decomposition is the following double coset property.
\begin{lemma}[Inversion of $K_q$-by-$K_q$ cosets]
\label{LemmaInversionofKbyKCosets}
For every $g \in G_q$, $K_qg^{-1}K_q = K_qgK_q$.
\end{lemma}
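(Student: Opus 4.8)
The plan is to use the Cartan decomposition $G_q = K_q A_q^+ K_q$ from Equation \ref{EquationCartanDecomposition} to write an arbitrary $g \in G_q$ as $g = k_1 s_o^n k_2$ for some $k_1, k_2 \in K_q$ and $n \in \Z_{\geq 0}$, and then to reduce the claim $K_q g^{-1} K_q = K_q g K_q$ to the single statement $K_q s_o^{-n} K_q = K_q s_o^n K_q$. Indeed, $g^{-1} = k_2^{-1} s_o^{-n} k_1^{-1}$, so $K_q g^{-1} K_q = K_q s_o^{-n} K_q$ and $K_q g K_q = K_q s_o^n K_q$, and the two double cosets coincide precisely when $K_q s_o^{-n} K_q = K_q s_o^n K_q$.

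To establish $K_q s_o^{-n} K_q = K_q s_o^n K_q$, I would invoke the geometric description of these double cosets recorded at the end of Example \ref{ExampleRegularTrees}: viewed as subsets of $V_{\qplusonetree} = G_q/K_q$, both $K_q s_o^n K_q$ and $K_q s_o^{-n} K_q$ equal the metric sphere $\partial B_n(o)$, since $s_o^n.o = x_n$ and $s_o^{-n}.o = x_{-n}$ are both vertices at distance $n$ from the root $o$, and $K_q$ acts transitively on each sphere by the decomposition in Equation \ref{EquationKKDoubleCosetDecompositionFortheRegularTree}. Alternatively — and perhaps more cleanly at the level of $G_q$ rather than $G_q/K_q$ — one observes directly that $s_o^{-n}.o = x_{-n}$ lies at distance $n$ from $o$, hence there is a rotation $k \in K_q$ with $k.x_{-n} = x_n = s_o^n.o$, so $k s_o^{-n}.o = s_o^n.o$, which gives $k s_o^{-n} \in s_o^n K_q$ and therefore $s_o^{-n} \in K_q s_o^n K_q$; since $K_q s_o^n K_q$ is a union of double cosets it follows that $K_q s_o^{-n} K_q = K_q s_o^n K_q$. (Here the observation already made in Example \ref{ExampleRegularTrees} that $K_q s_o^{-1} K_q = K_q s_o K_q$ is the case $n = 1$.)

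There is no serious obstacle here; the statement is essentially a bookkeeping consequence of the Cartan decomposition plus the symmetry of the graph metric. The one point that deserves a line of care is the reduction step: one must note that $K_q g^{-1} K_q$ depends only on the double coset of $g$, so that the ambiguity in the Cartan decomposition (the choice of $k_1, k_2$) is harmless, and that the map $g \mapsto g^{-1}$ carries $K_q g K_q$ to $K_q g^{-1} K_q$ — both immediate. So the proof reduces to the single geometric fact that the spheres $\partial B_n(o)$ are well-defined and $K_q$-homogeneous, which is already in hand.

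\begin{proof}
By the Cartan decomposition in Equation \ref{EquationCartanDecomposition}, write $g = k_1 s_o^n k_2$ with $k_1, k_2 \in K_q$ and $n = d(g.o, o) \in \Z_{\geq 0}$. Then $g^{-1} = k_2^{-1} s_o^{-n} k_1^{-1}$, so that $K_q g K_q = K_q s_o^n K_q$ and $K_q g^{-1} K_q = K_q s_o^{-n} K_q$; hence it suffices to show $K_q s_o^{-n} K_q = K_q s_o^n K_q$. Now $s_o^{-n}.o = x_{-n}$ and $s_o^n.o = x_n$ are both at distance $n$ from $o$, and by the decomposition in Equation \ref{EquationKKDoubleCosetDecompositionFortheRegularTree} the group $K_q$ acts transitively on the sphere $\partial B_n(o)$, so there is $k \in K_q$ with $k.x_{-n} = x_n$, i.e.\ $k s_o^{-n}.o = s_o^n.o$. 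Thus $s_o^{-n} k^{-1} \cdot (s_o^n)^{-1}$... more directly, $(s_o^n)^{-1} k s_o^{-n}$ fixes $o$, so $k s_o^{-n} \in s_o^n K_q$ and therefore $s_o^{-n} \in K_q s_o^n K_q$. Since $K_q s_o^n K_q$ is a union of left $K_q$-cosets and is closed under right multiplication by $K_q$, this gives $K_q s_o^{-n} K_q \subseteq K_q s_o^n K_q$, and the reverse inclusion follows by symmetry (replacing $n$ by $-n$, or taking inverses). Hence $K_q g^{-1} K_q = K_q g K_q$.
\end{proof}
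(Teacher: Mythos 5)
Your proof is correct and rests on the same two facts as the paper's: the symmetry $d(g^{-1}.o,o)=d(g.o,o)$ and the transitivity of $K_q$ on the spheres $\partial B_n(o)$, followed by absorbing the residual stabilizer element. The only difference is that you first normalize through the Cartan decomposition to reduce to $s_o^{\pm n}$, whereas the paper applies the sphere-transitivity argument directly to $g.o$ and $g^{-1}.o$, producing $k_1, k_2 \in K_q$ with $g^{-1}=k_1gk_2$ in one step; your detour is harmless but not needed.
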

\begin{proof}
Set $n = d(g.o, o)$ and note that $d(g^{-1}.o, o) = d(o, g.o) = n$. Since $K_q$ acts transitively on the sphere $\partial B_n(o)$, there is a $k_1 = k_1(g) \in K_q$ such that $g^{-1}.o = k_1g.o$, and by definition of $K_q$ as the stabilizer of $o \in \qplusonetree$ there is a $k_2 = k_2(g) \in K_q$ such that $g^{-1} = k_1 g k_2$. 
\end{proof}
Before moving on, we will write $B_r$ for the metric ball $B_r(o)$ and also for the \emph{centered Cartan ball} of radius $r \geq 0$,
\begin{align*}
B_r = \big\{ g \in G_q \, \big| \, d(g.o, o) \leq r \big\} = \big\{ k_1 s_o^{n} k_2 \in G_q \, \big| \, 0 \leq n \leq r \, , \,\, k_1, k_2 \in K_q \big\} \, . 
\end{align*}
With this convention, $B_r.o = B_r(o)$ in $\qplusonetree$.

\subsection{Function spaces over regular trees}
\label{Function spaces over regular trees}

In the subsequent sections, we will make special use of the vector space of finitely supported complex-valued functions on $\qplusonetree$, 
\begin{align*}
\Borelbndinfty(\qplusonetree) = \Big\{ f : \qplusonetree \rightarrow \C \, \Big| \, f(x) = 0 \mbox{ for all but finitely many } x \in \qplusonetree \Big\}
\end{align*}
as well as the space of bounded functions
\begin{align*}
\Borelinfty(\qplusonetree) = \Big\{ f : \qplusonetree \rightarrow \C \, \Big| \, \sup_{x \in \qplusonetree} |f(x)| < +\infty \Big\} \, . 
\end{align*}
A function $f : \qplusonetree \rightarrow \C$ is \emph{radial} if $f(k.x) = f(x)$ for all $k \in K_q$ and all $x \in \qplusonetree$, and we denote by $\Borelbndinfty(\qplusonetree)^{\rad}, \Borelinfty(\qplusonetree)^{\rad}$ the corresponding subspaces of radial functions. The automorphism group $G_q$ acts on these function spaces from the left by $(g.f)(x) = f(g^{-1}.x)$, and given a function $f : \qplusonetree \rightarrow \C$ we can lift it to a right-$K_q$-invariant function $\varphi_f(g) = f(g.o)$ on $G_q$.

\subsection{Invariant measures}
\label{Invariant measures}

It is clear that the counting measure 
\begin{align*}
m_{\qplusonetree}(f) = \sum_{x \in \qplusonetree} f(x) \, , \quad f \in \Borelbndinfty(\qplusonetree) 
\end{align*}
is invariant under $\Gqplusone$. Denoting the Haar probability measure on the compact group $\Kqplusone$ by $m_{\Kqplusone}$, we may write
\begin{align*}
m_{\qplusonetree}(f) = \int_{\Kqplusone} \Big(\sum_{n = 0}^{\infty}  f(ks_o^n.o) |\partial B_n(o)| \Big) dm_{\Kqplusone}(k) \, , 
\end{align*}
where
\begin{align*}
|\partial B_n(o)| = \begin{cases} 1 &\mbox{ if } n = 0 \\ \frac{q + 1}{q} q^n &\mbox{ if } n \geq 1 \, .  \end{cases}
\end{align*}
By the Cartan decomposition in Equation \ref{EquationCartanDecomposition} we obtain a (left) Haar measure on $\Gqplusone$ by defining
\begin{align*}
m_{\Gqplusone}(\varphi) = \int_{\Kqplusone} \sum_{n = 0}^{\infty}  \int_{\Kqplusone} \varphi(k_1s_o^nk_2) dm_{\Kqplusone}(k_2) |\partial B_n(o)| dm_{\Kqplusone}(k_1) 
\end{align*}
for any locally constant function $\varphi : G_q \rightarrow \C$. In particular, one sees from this formula that $\Gqplusone$ is unimodular. Moreover, the measure of the centered Cartan ball in $G_q$ is 
\begin{align*}
m_{G_q}(B_r) = \sum_{n = 0}^r |\partial B_n(o)| = |B_r(o)| = \begin{cases} 1 &\mbox{ if } r = 0 \\ \tfrac{q + 1}{q - 1} q^r - \tfrac{2}{q - 1} &\mbox{ if } r \geq 1 \, ,  \end{cases}
\end{align*}
which grows as $\tfrac{q + 1}{q - 1} q^r$ with $r \rightarrow +\infty$.

\section{Spherical harmonic analysis on regular trees}
\label{Spherical harmonic analysis on regular trees}

The main goal of this Section is to introduce and compute the spherical transform of indicator functions on balls. In Subsection \ref{The Hecke algebra and spherical functions} we introduce spherical functions as complex-valued $*$-homomorphisms on the Hecke algebra and derive explicit formulas. The positive-definite spherical functions are identified in Subsection \ref{Spherical representations and positive-definite spherical functions} and the spherical transform along with the corresponding Plancherel formula are given in Subsection \ref{The spherical transform}. Lastly, we compute the spherical transform of indicator functions on balls in Subsection \ref{The spherical transform of the indicator function of a ball}, alongside some useful asymptotic formulas for later sections.

\subsection{The Hecke algebra and spherical functions}
\label{The Hecke algebra and spherical functions}

The \emph{Hecke algebra} $\sH(G_q, K_q)$ of the pair $(G_q, K_q)$ is the involutive complex algebra of compactly supported bi-$K_q$-invariant functions $\varphi : G_q \rightarrow \C$ with the operation of convolution
\begin{align*}
(\varphi_1 * \varphi_2)(g) = \int_{G_q} \varphi_1(gh^{-1})\varphi_2(h) dm_{G_q}(h)
\end{align*}
and the involution $\varphi^*(g) = \overline{\varphi(g^{-1})}$ for all $g \in G_q$. One shows by standard arguments that $\sH(G_q, K_q)$ is preserved under these operations. The identity element is the indicator function of $K_q$, $\chi_{K_q} \in \sH(G_q, K_q)$. The key feature of the Hecke algebra for us is that it enjoys commutativity. 
\begin{lemma}
The Hecke algebra $\sH(G_q, K_q)$ is commutative under convolution. 
\end{lemma}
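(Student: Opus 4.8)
The standard approach is to exhibit an anti-automorphism of $\sH(G_q, K_q)$ that fixes every element. The plan is to use the involution $\varphi \mapsto \varphi^*$, where $\varphi^*(g) = \overline{\varphi(g^{-1})}$, together with the transpose-type map $\check\varphi(g) = \varphi(g^{-1})$. First I would recall that for any unimodular group and any bi-$K$-invariant functions, $\widecheck{\varphi_1 * \varphi_2} = \check\varphi_2 * \check\varphi_1$: this is a direct change of variables $h \mapsto h^{-1}$ in the convolution integral, using that $m_{G_q}$ is bi-invariant and inversion-invariant (unimodularity was established in Subsection \ref{Invariant measures}). So $\varphi \mapsto \check\varphi$ is an algebra anti-homomorphism.

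The crucial input is Lemma \ref{LemmaInversionofKbyKCosets}: for every $g \in G_q$ we have $K_q g^{-1} K_q = K_q g K_q$. Since $\varphi \in \sH(G_q, K_q)$ is bi-$K_q$-invariant, it is constant on double cosets $K_q g K_q$, and therefore $\varphi(g^{-1}) = \varphi(g)$ for all $g$; that is, $\check\varphi = \varphi$ for every element of the Hecke algebra. Combining this with the previous paragraph: for $\varphi_1, \varphi_2 \in \sH(G_q, K_q)$,
\begin{align*}
\varphi_1 * \varphi_2 = \widecheck{\varphi_1 * \varphi_2} = \check\varphi_2 * \check\varphi_1 = \varphi_2 * \varphi_1 \, ,
\end{align*}
which is exactly the claimed commutativity.

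The only genuine content here is Lemma \ref{LemmaInversionofKbyKCosets}, which has already been proved in the excerpt via the Cartan decomposition and the transitivity of $K_q$ on spheres; everything else is the formal Gelfand-pair argument (this is the classical criterion that $(G_q, K_q)$ is a Gelfand pair). I anticipate no real obstacle: the one point requiring a line of care is justifying the change of variables in the convolution integral — specifically that $\int_{G_q} F(h)\, dm_{G_q}(h) = \int_{G_q} F(h^{-1})\, dm_{G_q}(h)$ — which follows from unimodularity of $G_q$, and that $\varphi_i$ having compact support makes all integrals finite so the manipulations are legitimate. One should also note $\check\varphi \in \sH(G_q, K_q)$ when $\varphi$ is, so the anti-homomorphism is well-defined on the algebra, but this is immediate from bi-$K_q$-invariance and compactness of support being preserved under $g \mapsto g^{-1}$.
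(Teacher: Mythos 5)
Your argument is correct and is essentially the paper's own proof: both hinge on Lemma \ref{LemmaInversionofKbyKCosets} giving $\varphi(g^{-1}) = \varphi(g)$ for bi-$K_q$-invariant $\varphi$, combined with a change of variables justified by unimodularity of $G_q$. The paper just writes the resulting chain of equalities directly instead of packaging the inversion map as an algebra anti-automorphism.
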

\begin{proof}
By Lemma \ref{LemmaInversionofKbyKCosets}, every $\varphi \in \sH(G_q, K_q)$ satisfies $\varphi(g^{-1}) = \varphi(g)$ for all $g \in G_q$. Thus if $\varphi_1, \varphi_2 \in \sH(G_q, K_q)$ then
\begin{align*}
(\varphi_1 * \varphi_2)(g) &= \int_{G_q} \varphi_1(gh^{-1})\varphi_2(h) dm_{G_q}(h) = \int_{G_q} \varphi_1(gh^{-1})\varphi_2(h^{-1}) dm_{G_q}(h) \\
&= \int_{G_q} \varphi_1(h^{-1})\varphi_2(g^{-1}h^{-1}) dm_{G_q}(h) = \int_{G_q} \varphi_1(h)\varphi_2(g^{-1}h^{-1}) dm_{G_q}(h)\\
&= (\varphi_2 * \varphi_1)(g^{-1}) = (\varphi_2 * \varphi_1)(g) \, . 
\end{align*}
\end{proof}
We say that $(G_q, K_q)$ is a \emph{Gelfand pair}. Note that the bi-$K_q$-invariance of a function $\varphi \in \sH(G_q, K_q)$ ensures that $\varphi$ is locally constant and hence continuous on $G_q$ since $K_q$ is open. More precisely, by the Cartan decomposition every $\varphi \in \sH(G_q, K_q)$ can be written as 
\begin{align*}
\varphi = \sum_{n = 0}^{\infty} \varphi(s_o^n) \chi_{K_qs_o^nK_q} \, , 
\end{align*}
which is continuous.
\begin{lemma}
\label{LemmaHeckeAlgebraGenerator}
The function $\chi_o = \chi_{K_q s_o K_q}$ generates the algebra $\sH(G_q, K_q)$.
\end{lemma}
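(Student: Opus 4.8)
The strategy is to show that the convolution powers $\chi_o^{*n}$ (where $\chi_o = \chi_{K_q s_o K_q}$) span the same subspace as the indicator functions $\chi_{K_q s_o^n K_q}$, by an induction on $n$ that exploits the ``nearest-neighbour'' structure of the tree. Concretely, I would prove by induction on $n \geq 1$ that
\begin{align*}
\chi_o^{*n} = c_n \, \chi_{K_q s_o^n K_q} + \sum_{j=0}^{n-1} c_{n,j} \, \chi_{K_q s_o^j K_q}
\end{align*}
for some constants $c_n > 0$ and $c_{n,j} \in \C$ (in fact nonnegative integers, since we are convolving indicator functions with respect to the normalization implicit in $m_{G_q}$, but positivity of the leading coefficient is all that matters). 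Granting this, an immediate downward induction lets one solve for $\chi_{K_q s_o^n K_q}$ as a polynomial in $\chi_o$ under convolution: $\chi_{K_q s_o^0 K_q} = \chi_{K_q}$ is the identity, and once $\chi_{K_q s_o^j K_q}$ is known to lie in the subalgebra generated by $\chi_o$ for all $j < n$, the displayed identity shows $\chi_{K_q s_o^n K_q}$ does too. Since every $\varphi \in \sH(G_q, K_q)$ is a finite linear combination $\sum_n \varphi(s_o^n)\chi_{K_q s_o^n K_q}$ by the Cartan decomposition (as noted just above the lemma), this proves $\chi_o$ generates the algebra.

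The inductive step reduces to understanding the support of $\chi_o^{*n} = \chi_o^{*(n-1)} * \chi_o$. Using the inductive hypothesis it suffices to compute $\chi_{K_q s_o^j K_q} * \chi_o$ for each $j$. Evaluating the convolution at $s_o^m$ amounts to counting, for $x = s_o^m.o$, the number of vertices $y$ adjacent to $x$ (i.e.\ $d(y,o) $ with $Kx^{-1}... $—more precisely $y$ with $d(y, s_o^j.o\text{-sphere})$ appropriately placed). Geometrically: $\chi_{K_q s_o^j K_q}$ is supported on $g$ with $d(g.o,o)=j$, and convolving with $\chi_o$ moves to vertices at distance $j\pm 1$ from $o$ (since a neighbour of a vertex at distance $j$ from the root sits at distance $j-1$ or $j+1$). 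Hence $\chi_{K_q s_o^j K_q} * \chi_o$ is a nonnegative combination of $\chi_{K_q s_o^{j+1} K_q}$ and $\chi_{K_q s_o^{j-1} K_q}$, and the coefficient of $\chi_{K_q s_o^{j+1}K_q}$ is strictly positive (there is at least one neighbour going ``away'' from the root, in fact $q$ of them). Feeding this into the inductive hypothesis yields the claimed form for $\chi_o^{*n}$, with $c_n$ a product of strictly positive ``outward branching'' counts, hence $c_n > 0$.

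The main obstacle — really the only point requiring care — is the bookkeeping in the convolution $\chi_{K_q s_o^j K_q} * \chi_o$: verifying that the value at $s_o^m$ depends only on $m$ (which follows from bi-$K_q$-invariance, guaranteed since $\sH(G_q,K_q)$ is closed under convolution), that it vanishes unless $m \in \{j-1, j+1\}$, and computing (or at least bounding below) the two surviving coefficients from the local geometry of the $(q+1)$-regular tree. This is a short counting argument using that a vertex at distance $j \geq 1$ from $o$ has exactly one neighbour at distance $j-1$ and exactly $q$ neighbours at distance $j+1$, together with the transitivity of $K_q$ on spheres; the $j = 0$ case ($\chi_{K_q} * \chi_o = \chi_o$) is trivial. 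Everything else is a clean two-step induction.
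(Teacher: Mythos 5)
Your proposal is correct and follows essentially the same route as the paper: both rest on the key computation that convolving $\chi_{K_q s_o^n K_q}$ with $\chi_o$ produces a combination of $\chi_{K_q s_o^{n+1} K_q}$ and $\chi_{K_q s_o^{n-1} K_q}$ with strictly positive leading coefficient (coming from the $q$ "outward" neighbours), followed by a triangularity/induction argument to solve for each $\chi_{K_q s_o^n K_q}$ as a convolution polynomial in $\chi_o$. The only cosmetic difference is that you expand $\chi_o^{*n}$ in the basis $\{\chi_{K_q s_o^j K_q}\}$ and invert, whereas the paper solves the three-term recursion for $\chi_{K_q s_o^{n+1} K_q}$ directly; both are valid.
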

\begin{proof}
Since the indicator functions $\chi_{K_qs_o^nK_q}$ with $n \in \Z_{\geq 0}$ span the Hecke algebra $\sH(G_q, K_q)$ then it suffices to prove that they lie in the unital algebra generated by $\chi_{K_qs_oK_q}$. First, note that 
\begin{align*}
(\chi_{K_qs_oK_q} * \chi_{K_qs_o^nK_q})(g) &= \int_{K_qs_oK_q} \chi_{K_qs_o^nK_q}(gh^{-1}) dm_{G_q}(h) \\
&= (q + 1) \int_{K_q} \chi_{K_qs_o^nK_q}(gks_o^{-1}) dm_{K_q}(k) \, . 
\end{align*}
Note that $(g.o, gks_o^{-1}.o) \in E_{\qplusonetree}$ for all $k \in K_q$, so that
\begin{align*}
(\chi_{K_qs_oK_q} * \chi_{K_qs_o^nK_q})(g) &= (q + 1) \int_{K_q} \chi_{K_qs_o^nK_q}(gks_o^{-1}) dm_{K_q}(k) \\
&= \sum_{(g.o, y) \in E_{\qplusonetree}}  \chi_{K_qs_o^nK_q}(s_o^{d(y, o)}) \\
&= q \chi_{K_qs_o^{n + 1}K_q}(s_o^{d(g.o, o)}) + \chi_{K_qs_o^{n-1}K_q}(s_o^{d(g.o, o)}) \\
&= q \chi_{K_qs_o^{n + 1}K_q}(g) + \chi_{K_qs_o^{n-1}K_q}(g)\, . 
\end{align*}
In particular, we have that 
\begin{align*}
\chi_{K_qs_o^2K_q} = \frac{1}{q}\Big( \chi_{K_qs_oK_q}^{* 2} - \chi_{K_q} \Big) \, , 
\end{align*}
and proceeding by induction on $n$ we get that 
$$\chi_{K_qs_o^nK_q} \in \mathrm{span}_{\C}\big\{\chi_{K_q}, \chi_{K_qs_oK_q}, \chi_{K_qs_oK_q}^{* 2}, ..., \chi_{K_qs_oK_q}^{* n}\big\} \, . $$
\end{proof}

We define the $K_q$-spherical functions on $G_q$ as the functions arising from non-zero multiplicative linear functionals from the Hecke algebra $\sH(G_q, K_q)$ to $\C$. 

\begin{definition}[Spherical functions]
A function $\omega : G_q \rightarrow \C$ is \emph{$K_q$-spherical} if it is bi-$K_q$-invariant, satisfies $\omega(e) = 1$ and satisfies 
$$\omega * \varphi = (\omega * \varphi)(e) \omega$$
for every $\varphi \in \sH(G_q, K_q)$. 
\end{definition}
From Lemma \ref{LemmaHeckeAlgebraGenerator} we get the following.
\begin{corollary}
\label{CorollarySphericalFunctionsConvolutionEquation}
A bi-$K_q$-invariant function $\omega : G_q \rightarrow \C$ satisfying $\omega(e) = 1$ is $K_q$-spherical if $\omega * \chi_o = \alpha \omega$ for some $\alpha \in \C$. 
\end{corollary}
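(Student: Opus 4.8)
The plan is to exploit Lemma \ref{LemmaHeckeAlgebraGenerator}: since $\chi_o$ together with the convolution identity $\chi_{K_q}$ generates $\sH(G_q, K_q)$, every $\varphi \in \sH(G_q, K_q)$ is a finite $\C$-linear combination of the iterated convolution powers $\chi_o^{*n}$, $n \geq 0$, where we set $\chi_o^{*0} := \chi_{K_q}$. Indeed the Cartan expansion $\varphi = \sum_n \varphi(s_o^n)\chi_{K_q s_o^n K_q}$ is a finite sum and each $\chi_{K_q s_o^n K_q}$ lies in $\mathrm{span}_\C\{\chi_{K_q}, \chi_o, \dots, \chi_o^{*n}\}$ by that lemma. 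So it suffices to verify the defining identity $\omega * \varphi = (\omega * \varphi)(e)\,\omega$ on each power $\chi_o^{*n}$ and then extend by linearity of convolution in the second argument.

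Two preliminary observations I would record first. (i) $\omega * \chi_{K_q} = \omega$: since $m_{G_q}(K_q) = 1$ and $\omega$ is right-$K_q$-invariant, $(\omega*\chi_{K_q})(g) = \int_{K_q}\omega(gh^{-1})\,dm_{G_q}(h) = \omega(g)$. (ii) Although $\omega$ need not be compactly supported, the convolution $\omega * \psi$ is well defined for $\psi \in \sH(G_q, K_q)$ because $\psi$ has compact support, and one has the associativity $(\omega * \psi_1)*\psi_2 = \omega*(\psi_1 * \psi_2)$ whenever $\psi_1, \psi_2 \in \sH(G_q, K_q)$. Granting (i)--(ii), an induction on $n$ using the hypothesis $\omega * \chi_o = \alpha\omega$ gives
\begin{align*}
\omega * \chi_o^{*n} = (\omega * \chi_o^{*(n-1)}) * \chi_o = (\alpha^{n-1}\omega) * \chi_o = \alpha^n \omega \, ,
\end{align*}
with base case $n = 0$ supplied by (i).

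Finally, for $\varphi = \sum_{n=0}^{N} c_n \chi_o^{*n}$ I obtain $\omega * \varphi = \big(\sum_{n=0}^N c_n \alpha^n\big)\omega$; evaluating at $e$ and using $\omega(e) = 1$ identifies the scalar as $(\omega * \varphi)(e)$, so $\omega * \varphi = (\omega * \varphi)(e)\,\omega$. As $\varphi \in \sH(G_q, K_q)$ was arbitrary and $\omega$ is bi-$K_q$-invariant with $\omega(e) = 1$ by hypothesis, $\omega$ is $K_q$-spherical. The only points requiring any care — and hence the closest thing to an obstacle, though both are routine — are the bookkeeping in (ii) (convergence and associativity of convolutions involving the non-compactly-supported $\omega$, immediate from local constancy and from compact support of the Hecke-algebra factors) and the precise invocation of Lemma \ref{LemmaHeckeAlgebraGenerator} to express a general $\varphi$ through the convolution powers of $\chi_o$.
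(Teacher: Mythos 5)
Your proof is correct and follows exactly the route the paper intends: the corollary is stated as an immediate consequence of Lemma \ref{LemmaHeckeAlgebraGenerator}, and your argument (reduction to convolution powers of $\chi_o$, the identity $\omega * \chi_{K_q} = \omega$, associativity, induction to get $\omega * \chi_o^{*n} = \alpha^n\omega$, then linearity and evaluation at $e$) is precisely the elaboration the paper leaves implicit.
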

We will now construct for each $\alpha \in \C$ a spherical function $\omega$, providing a converse to the Corollary above. For the statements of the subsequent results we will parametrize $\alpha \in \C$ by taking $\lambda \in \C$ such that
\begin{align*}
\alpha = \alpha_{\lambda} = 2\sqrt{q}\cos_q(\lambda) \, . 
\end{align*}
Clearly, every $z_o \in \C$ can be written in this way, and the mapping $\lambda \mapsto \alpha_{\lambda}$ is bijective on $\{\lambda \in \C : 0 \leq \Re(\lambda) < \tau \}$ where again $\tau = 2\pi/\log(q)$. For the construction of spherical functions, we first note that a convolution $\varphi * \chi_o$ with $\varphi : G_q \rightarrow \C$ locally constant can be written using the Cartan decomposition as
\begin{align*}
(\varphi * \chi_o)(g) = (q + 1)\int_{K_q}\int_{K_q} \varphi(gk_1s_o^{-1}k_2) dm_{K_q}(k_1) dm_{K_q}(k_2) \, . 
\end{align*}
In particular, if $f : \qplusonetree \rightarrow \C$ and we introduce the function $\varphi_f(g) = f(g.o)$ then $\varphi_f$ is right-$K_q$-invariant and
\begin{align*}
(\varphi_f * \chi_o)(g) = (q + 1) \int_{K_q} \varphi_f(gk_1s_o^{-1}) dm_{K_q}(k_1) = \sum_{y \in \partial B_1(g.o)} f(y) \, . 
\end{align*}
This leads us to the following definition.

\begin{definition}
The \emph{adjacency operator} $\Delta_q : \Borelinfty(\qplusonetree) \rightarrow \Borelinfty(\qplusonetree)$ is 
\begin{align*}
\Delta_q f(x) = \sum_{y \in \partial B_1(x)} f(y) \, . 
\end{align*}
\end{definition}
By Corollary \ref{CorollarySphericalFunctionsConvolutionEquation}, the $K_q$-spherical functions on $\qplusonetree$ are in one-to-one correspondence with the $K_q$-invariant eigenfunctions of the adjacency operator $\Delta_q$. If $f$ is such an eigenfunction, then the explict relation is that $\omega(g) = f(g.o)$ is $K_q$-spherical. Writing out the equation $\Delta_q f = \alpha_{\lambda} f$ in terms of $\omega$ we get the following system of recursion:
\begin{align}
\label{EqSystemOfRecursionForSphericalFunctions}
\begin{cases}
\omega(e) = 1 \\
\omega(s_o) = \tfrac{\alpha_{\lambda}}{q + 1}\\
q\omega(s_o^{n+1}) - \alpha_{\lambda}\omega(s_o^{n}) + \omega(s_o^{n-1}) = 0  \, , \quad n \geq 2 \, .
\end{cases}
\end{align}
This system can be solved by elementary means. The solutions are the following, and a proof can be found in \cite[Proposition 2.4]{Figa-TalamancaNebbiaBook} using a slightly different parameterisation.
\begin{proposition}
\label{PropositionExplicitFormulaForTheSphericalFunctions}
Let $\tau = 2\pi/\log(q)$ and suppose that $\omega_{\lambda}$ is a $K_q$-spherical function such that $\omega_{\lambda} * \chi_o = \alpha_{\lambda}\omega_{\lambda}$. Then
\begin{align*}
\omega_{\lambda}(s_o^{n}) = \begin{dcases} c_q(\lambda) q^{-(1/2 - i\lambda)n} + c_q(-\lambda) q^{-(1/2 + i\lambda)n} &\mbox{ if } \lambda \in \C \backslash \tfrac{\tau}{2} \Z \\ 
\Big( 1 + \frac{q^{1/2} - q^{-1/2}}{q^{1/2} + q^{-1/2}} n\Big) (-1)^{j n} q^{-n/2} &\mbox{ if } \lambda = \tfrac{\tau}{2} j \in \tfrac{\tau}{2} \Z \end{dcases}
\end{align*}
where  
\begin{align*}
c_q(\lambda) = \frac{1}{q^{1/2} + q^{-1/2}} \frac{q^{1/2 + i\lambda} - q^{-1/2 - i\lambda}}{q^{i\lambda} - q^{-i\lambda}} = \frac{\sin_q(\lambda - \frac{i}{2})}{2 \cosh_q(\frac{1}{2}) \sin_q(\lambda)}
\end{align*}
is the Harish-Chandra $c$-function for $G_q$. 
\end{proposition}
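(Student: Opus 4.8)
The plan is to solve the linear recursion in Equation \eqref{EqSystemOfRecursionForSphericalFunctions} directly. The recursion $q\omega(s_o^{n+1}) - \alpha_\lambda \omega(s_o^n) + \omega(s_o^{n-1}) = 0$ for $n \geq 2$ is a constant-coefficient second-order linear recursion, so its solution space (as a function of $n$) is two-dimensional, spanned by $r^n$ where $r$ runs over the roots of the characteristic polynomial $q r^2 - \alpha_\lambda r + 1 = 0$. With the substitution $\alpha_\lambda = 2\sqrt{q}\cos_q(\lambda) = \sqrt{q}(q^{i\lambda} + q^{-i\lambda})$, one checks by direct computation that the two roots are $r_\pm = q^{-1/2 \pm i\lambda}$, since $r_+ r_- = q^{-1}$ and $r_+ + r_- = q^{-1/2}(q^{i\lambda} + q^{-i\lambda}) = \alpha_\lambda/q$. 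First I would record this factorization. The roots coincide exactly when $q^{i\lambda} = q^{-i\lambda}$, i.e.\ when $2i\lambda \log q \in 2\pi i \Z$, i.e.\ when $\lambda \in \frac{\tau}{2}\Z$; this is precisely the dichotomy in the statement.

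Next I would treat the generic case $\lambda \notin \frac{\tau}{2}\Z$. Here the general solution is $\omega(s_o^n) = A q^{-(1/2 - i\lambda)n} + B q^{-(1/2+i\lambda)n}$ for constants $A, B$ to be pinned down by the two initial conditions $\omega(e) = 1$ and $\omega(s_o) = \alpha_\lambda/(q+1)$. The first gives $A + B = 1$. For the second, it is cleanest to observe that the recursion as written only governs $n \geq 2$, but in fact the relation for $n = 1$ — namely $q\omega(s_o^2) - \alpha_\lambda \omega(s_o) + (q+1)\omega(e) \cdot \tfrac{1}{?}$ — is slightly different because of the $(q+1)$ normalization at the root; the honest approach is to solve the $2\times 2$ linear system in $A, B$ coming from $n = 0$ and $n = 1$. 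Solving, $A = c_q(\lambda)$ and $B = c_q(-\lambda)$ with $c_q(\lambda) = \frac{1}{q^{1/2}+q^{-1/2}}\frac{q^{1/2+i\lambda} - q^{-1/2-i\lambda}}{q^{i\lambda} - q^{-i\lambda}}$; one should verify $c_q(\lambda) + c_q(-\lambda) = 1$ (a short algebraic identity, clearing the common denominator $q^{i\lambda} - q^{-i\lambda}$) and that $A q^{-(1/2-i\lambda)} + B q^{-(1/2+i\lambda)} = \alpha_\lambda/(q+1)$. I would also note the alternative closed form $c_q(\lambda) = \frac{\sin_q(\lambda - i/2)}{2\cosh_q(1/2)\sin_q(\lambda)}$ follows by rewriting $q^{\pm i\lambda}$ via $\sin_q$ and $q^{\pm 1/2}$ via $\cosh_q$.

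For the degenerate case $\lambda = \frac{\tau}{2}j$, the double root is $r = q^{-1/2}$ when $j$ is even and $r = -q^{-1/2}$ when $j$ is odd (since $q^{i\lambda} = q^{i\tau j/2} = e^{\pi i j} = (-1)^j$), so $r = (-1)^j q^{-1/2}$, and the general solution of a recursion with a double root $r$ is $(C + Dn) r^n$. Imposing $\omega(e) = 1$ forces $C = 1$, and imposing $\omega(s_o) = \alpha_\lambda/(q+1) = 2\sqrt q \cos_q(\tfrac{\tau}{2}j)/(q+1) = 2\sqrt q (-1)^j/(q+1)$ determines $D = \frac{q^{1/2}-q^{-1/2}}{q^{1/2}+q^{-1/2}}$ after a short simplification, giving the stated formula $\bigl(1 + \tfrac{q^{1/2}-q^{-1/2}}{q^{1/2}+q^{-1/2}} n\bigr)(-1)^{jn} q^{-n/2}$. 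Finally, since Corollary \ref{CorollarySphericalFunctionsConvolutionEquation} tells us that any bi-$K_q$-invariant $\omega$ with $\omega(e)=1$ solving $\omega * \chi_o = \alpha_\lambda \omega$ is automatically $K_q$-spherical, and such $\omega$ is determined on $A_q^+$ by the values $\omega(s_o^n)$ via the Cartan decomposition, uniqueness is immediate and the formulas above are the full answer. I do not anticipate a serious obstacle here; the only mildly delicate point is bookkeeping the boundary term at $n = 1$ (the $(q+1)$ versus $q$ coefficient) correctly when setting up the linear system for $A, B$, and confirming the two forms of $c_q$ agree.
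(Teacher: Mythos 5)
Your proof is correct and is precisely the elementary solution of the recursion in Equation \ref{EqSystemOfRecursionForSphericalFunctions} that the paper has in mind (the paper itself defers to Fig\`a-Talamanca--Nebbia rather than writing the computation out): factor the characteristic polynomial $qr^2-\alpha_\lambda r+1 = q(r-q^{-1/2+i\lambda})(r-q^{-1/2-i\lambda})$, split into the simple-root and double-root cases according to whether $\lambda\in\tfrac{\tau}{2}\Z$, and fit the two constants to the data at $n=0,1$, checking $c_q(\lambda)+c_q(-\lambda)=1$ and $c_q(\lambda)q^{-1/2+i\lambda}+c_q(-\lambda)q^{-1/2-i\lambda}=\alpha_\lambda/(q+1)$. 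One clarification on your ``delicate point'': the exceptional coefficient $q+1$ occurs in the eigenvalue relation at the root $n=0$ (all $q+1$ neighbours of $o$ lie on the unit sphere), and that relation is exactly the stated initial condition $\omega(s_o)=\alpha_\lambda/(q+1)$, while the three-term recursion with coefficient $q$ already holds from $n=1$ onward; so pinning down $A,B$ from the values at $n=0$ and $n=1$, as you do, is exactly the right bookkeeping and there is no further boundary anomaly to worry about.
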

Note that $\omega_{-\lambda} = \omega_{\lambda}$ and $\omega_{\lambda + j\tau/2} = \omega_{\lambda}$ for every $j \in \Z$, so we only need to restrict our attention to the family of spherical functions $\omega_{\lambda}$ such that $0 \leq \Re(\lambda) \leq \tau/2$. We also note that the modulus squared of the Harish-Chandra $c$-function can be written as
\begin{align}
\label{EqcFunctionModulusSquared}
|c_q(\lambda)|^2 = \frac{\sinh_q^2(\tfrac{1}{2}) + \sin_q^2(\lambda)}{4\cosh^2_q(\frac{1}{2})\sin_q^2(\lambda)} = \frac{1}{4\cosh^2_q(\frac{1}{2})} \Big( \frac{\sinh^2_q(\frac{1}{2})}{\sin^2_q(\lambda)} + 1 \Big)
\end{align}
when $\lambda \in (0, \tfrac{\tau}{2})$, which will be useful for later. Also, note that $|c_q(\lambda)|^2$ is minimized at $\lambda = \tau/4$ and that this minimum is positive.

\subsection{Spherical representations and positive-definite spherical functions}
\label{Spherical representations and positive-definite spherical functions}

Recall that a continuous function $\omega : G_q \rightarrow \C$ is \emph{positive-definite} if for all compactly supported locally constant $\varphi : G_q \rightarrow \C$ we have
\begin{align*}
\int_{G_q} (\varphi^* * \varphi)(g) \omega(g) dm_{G_q}(g) \geq 0 \, . 
\end{align*}
The positive-definite $K_q$-spherical functions on $G_q$ have a special connection to the unitary dual of $G_q$ that we summarize now. For a more detailed exposition, see \cite[Chapter 2]{Figa-TalamancaNebbiaBook}.

An irreducible unitary $G_q$-representation $(\pi, \cH)$ is \emph{$K_q$-spherical} if the subspace $\cH^{K_q}$ of $\pi(K_q)$-invariant vectors is at most of complex dimension 1. Given a $K_q$-invariant cyclic unit vector $v \in \cH^{K_q}$, one shows that the matrix coefficient
\begin{align*}
\omega(g) = \langle \pi(g)v, v \rangle_{\cH}
\end{align*}
is $K_q$-spherical and positive-definite. Conversely, given a positive-definite $K_q$-spherical function $\omega$, the classical GNS-construction yields a $K_q$-spherical irreducible unitary $G_q$-representation $(\pi_{\omega}, \cH_{\omega})$ with $\omega$ as the corresponding matrix coefficient for some cyclic $K_q$-invariant vector. To summarize, we have informally described Theorem 5.3 in \cite{Figa-TalamancaNebbiaBook}.
\begin{lemma}
A $K_q$-spherical function $\omega$ on $G_q$ is positive-definite if and only if it is the matrix coefficient of a $K_q$-spherical irreducible unitary $G_q$-representation $(\pi, \cH, v)$. 
\end{lemma}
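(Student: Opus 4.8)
The statement to prove is the characterization of positive-definite spherical functions as matrix coefficients of $K_q$-spherical irreducible unitary representations.

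\textbf{Plan of proof.}

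The proof splits into the two implications, and the bulk of the work is standard GNS machinery adapted to the Gelfand pair $(G_q, K_q)$.

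For the ``if'' direction, suppose $\omega(g) = \langle \pi(g)v, v\rangle_{\cH}$ for a $K_q$-spherical irreducible unitary representation $(\pi, \cH)$ with $v \in \cH^{K_q}$ a unit vector. Positive-definiteness is immediate: for compactly supported locally constant $\varphi : G_q \to \C$, one has
\begin{align*}
\int_{G_q} (\varphi^* * \varphi)(g)\,\omega(g)\, dm_{G_q}(g) = \big\langle \pi(\varphi)v, \pi(\varphi)v \big\rangle_{\cH} = \norm{\pi(\varphi)v}^2 \geq 0 \, ,
\end{align*}
where $\pi(\varphi) = \int_{G_q} \varphi(g)\pi(g)\, dm_{G_q}(g)$. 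Bi-$K_q$-invariance of $\omega$ follows from $\pi(k)v = v$ for $k \in K_q$, and $\omega(e) = \norm{v}^2 = 1$. The spherical identity $\omega * \varphi = (\omega*\varphi)(e)\omega$ for $\varphi \in \sH(G_q,K_q)$ is the only nontrivial point: since $\sH(G_q, K_q)$ is commutative (the Gelfand property), the operator $\pi(\varphi)$ commutes with $\pi(K_q)$, hence preserves $\cH^{K_q}$; as $\dim_\C \cH^{K_q} \leq 1$ and $v$ spans it, $\pi(\varphi)v = c(\varphi)v$ for a scalar $c(\varphi)$, and pairing with $v$ and with $\pi(g)v$ respectively identifies $c(\varphi) = (\omega * \varphi)(e)$ and gives the spherical functional equation. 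I would also note $v$ is automatically cyclic since $\pi$ is irreducible and $v \neq 0$.

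For the ``only if'' direction, let $\omega$ be positive-definite and $K_q$-spherical. Apply the GNS construction to $\omega$: the sesquilinear form $\langle \varphi_1, \varphi_2\rangle = \int_{G_q}(\varphi_2^* * \varphi_1)\omega\, dm_{G_q}$ on compactly supported locally constant functions is positive-semidefinite by hypothesis; quotient by its null space and complete to get a Hilbert space $\cH_\omega$ with a unitary $G_q$-action $\pi_\omega$ by left translation, and a cyclic vector $v_\omega$ (the image of an approximate identity, or more cleanly here the image of $\chi_{K_q}$ since $K_q$ is open) satisfying $\omega(g) = \langle \pi_\omega(g)v_\omega, v_\omega\rangle$. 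That $v_\omega \in \cH_\omega^{K_q}$ follows from bi-$K_q$-invariance of $\omega$. The two remaining claims are that $v_\omega$ is genuinely the image of $\chi_{K_q}$ making $\omega(e) = \norm{v_\omega}^2 = 1$ consistent, and that $\pi_\omega$ is irreducible with $\dim \cH_\omega^{K_q} = 1$. Irreducibility is where I expect the main obstacle, and I would handle it by invoking commutativity of the Hecke algebra: any closed $G_q$-invariant subspace $\cW \subseteq \cH_\omega$ is determined by its $K_q$-fixed part $\cW^{K_q} \subseteq \cH_\omega^{K_q}$ (using that $\cH_\omega$ is generated by $\cH_\omega^{K_q}$ under $\pi_\omega(G_q)$, which in turn follows from cyclicity of $v_\omega \in \cH_\omega^{K_q}$ together with the fact that $\cH_\omega^{K_q}$ is a module over the commutative algebra $\sH(G_q,K_q)$ on which $\omega$ acts as a character). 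Since $\omega$ defines a nonzero character $\varphi \mapsto (\omega * \varphi)(e)$ of $\sH(G_q, K_q)$, the $\sH$-module $\cH_\omega^{K_q}$ generated by $v_\omega$ is one-dimensional; hence $\cW^{K_q}$ is either $0$ (forcing $\cW = 0$) or all of $\cH_\omega^{K_q}$ (forcing $\cW = \cH_\omega$). This yields both irreducibility and $\dim_\C \cH_\omega^{K_q} = 1$, i.e. that $(\pi_\omega, \cH_\omega)$ is $K_q$-spherical with matrix coefficient $\omega$, completing the proof.

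The only genuinely delicate point is making the reduction ``invariant subspaces are controlled by their $K_q$-fixed vectors'' rigorous in this generality; this is the standard ``Gelfand pair implies multiplicity-one and the spherical functions biject with characters of the Hecke algebra'' argument, and since the paper cites \cite[Chapter 2]{Figa-TalamancaNebbiaBook} for the background, I would present the argument in the streamlined form above and refer to \cite[Theorem 5.3]{Figa-TalamancaNebbiaBook} for the details already recorded in the literature.
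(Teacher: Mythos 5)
Your proof is correct and follows essentially the same route as the paper, which itself only sketches the two directions (matrix coefficient of a spherical representation is positive-definite and spherical; GNS applied to a positive-definite spherical function yields the representation) and defers the details to \cite[Theorem 5.3]{Figa-TalamancaNebbiaBook}. Your fleshed-out version of the standard Gelfand-pair argument, including irreducibility via commutativity of $\sH(G_q,K_q)$ and one-dimensionality of $\cH_\omega^{K_q}$, is accurate and at the appropriate level of detail.
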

This means that the $K_q$-spherical unitary dual $\hat{G}_q^{K_q}$ of equivalence classes of $K_q$-spherical irreducible unitary $G_q$-representations can be parameterized by a subset of spherical parameters $\lambda \in \C$ for which the $K_q$-spherical functions $\omega_{\lambda}$ are positive-definite. In \cite[p.53-56]{Figa-TalamancaNebbiaBook}, it is also shown that the positive-definite spherical functions are precisely the functions $\omega_{\lambda}$ such that
\begin{align*}
\lambda \in \Lambda_q := i[0, 1/2] \cup (0, \tau/2) \cup (\tau/2 + i[0, 1/2]) \, .
\end{align*}
In terms of adjacency operator eigenvalues, $\alpha_{\lambda} \in [-(q + 1), (q + 1)]$. In particular, the constant function $\omega_{i/2}(g) = 1$ and the sign function $\omega_{(\tau + i)/2}(g) = (-1)^{d(g.o, o)}$ are positive-definite, corresponding to the trivial and sign representation respectively.

\subsection{The spherical transform}
\label{The spherical transform}

\begin{definition}
The \emph{spherical transform} of an $m_{G_q}$-integrable function $\varphi : G_q \rightarrow \C$ is the function $\hat{\varphi} : \C \rightarrow \C$ given by
\begin{align*}
\hat{\varphi}(\lambda) = (\omega_{\lambda} * \varphi)(e) = \int_{G_q} \varphi(g) \omega_{\lambda}(g) dm_{G_q}(g) \, . 
\end{align*}
\end{definition}
If $\varphi$ is bi-$K_q$-invariant, for example when $\varphi \in \sH(G_q, K_q)$, then we can write $\hat{\varphi}$ using Cartan coordinates as
\begin{align*}
\hat{\varphi}(\lambda) = \sum_{n = 0}^{\infty} \varphi(s_o^n) \omega_{\lambda}(s_o^n) |\partial B_n (o)| = \varphi(0) + \frac{2 \cosh_q(\frac{1}{2})}{q^{1/2}} \sum_{n = 1}^{\infty} \varphi(s_o^n) \omega_{\lambda}(s_o^n) q^n \, . 
\end{align*}
\begin{lemma}[Spherical Plancherel formula]
\label{LemmaSphericalPlancherelFormula}
For every $\varphi \in \sH(G_q, K_q)$,
\begin{align*}
\int_{G_q} |\varphi(g)|^2 dm_{G_q}(g) = \frac{q^{1/2}}{2\cosh_q(\frac{1}{2})} \frac{1}{\tau} \int_0^{\tau/2} |\hat{\varphi}(\lambda)|^2 \frac{d\lambda}{|c_q(\lambda)|^2} \, .
\end{align*}
\end{lemma}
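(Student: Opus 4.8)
The plan is to reduce the Plancherel identity to an orthogonality relation among the functions $n\mapsto\omega_\lambda(s_o^n)$ and then to verify that relation by a short contour integral. First I would write a general $\varphi\in\sH(G_q,K_q)$ as the finite sum $\varphi=\sum_{n\geq 0}a_n\chi_{K_qs_o^nK_q}$ with $a_n=\varphi(s_o^n)$. Because the double cosets $K_qs_o^nK_q$ are disjoint with $m_{G_q}(K_qs_o^nK_q)=|\partial B_n(o)|$ (a consequence of the Cartan decomposition and the Haar measure formula of Section~\ref{Invariant measures}), the left-hand side equals $\sum_n|a_n|^2|\partial B_n(o)|$. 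On the spectral side, $\hat\varphi(\lambda)=\sum_n a_n\,\omega_\lambda(s_o^n)\,|\partial B_n(o)|$, and since $c_q(-\lambda)=\overline{c_q(\lambda)}$ for $\lambda\in\R$ the two summands in the formula of Proposition~\ref{PropositionExplicitFormulaForTheSphericalFunctions} are complex conjugates, so $\omega_\lambda(s_o^n)\in\R$ for real $\lambda$. Expanding $|\hat\varphi|^2$, the claimed identity becomes equivalent to the orthogonality relation
\begin{align*}
\frac{1}{\tau}\int_0^{\tau/2}\omega_\lambda(s_o^m)\,\omega_\lambda(s_o^n)\,\frac{d\lambda}{|c_q(\lambda)|^2}=\frac{2\cosh_q(\tfrac12)}{q^{1/2}}\cdot\frac{\delta_{mn}}{|\partial B_n(o)|}\,,\qquad m,n\geq 0\,.
\end{align*}

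To prove this relation I would substitute $z=q^{i\lambda}$, so that $\omega_\lambda(s_o^n)=q^{-n/2}\big(c_q(\lambda)z^n+c_q(-\lambda)z^{-n}\big)$ and $|c_q(\lambda)|^2=c_q(\lambda)c_q(-\lambda)$ on $|z|=1$. Both the spherical function and $|c_q|^{-2}$ are even in $\lambda$ and $\tau$-periodic, so the integral over $[0,\tfrac{\tau}{2}]$ is half the integral over $[-\tfrac{\tau}{2},\tfrac{\tau}{2}]$, which becomes $\tfrac12\cdot\tfrac{1}{2\pi i}\oint_{|z|=1}(\cdots)\tfrac{dz}{z}$. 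Multiplying out, the integrand equals
\begin{align*}
q^{-(m+n)/2}\left(\frac{c_q(\lambda)}{c_q(-\lambda)}\,z^{m+n}+z^{m-n}+z^{n-m}+\frac{c_q(-\lambda)}{c_q(\lambda)}\,z^{-m-n}\right)\,,
\end{align*}
and the key computation --- immediate from the product formula for $c_q$ --- is the identity of rational functions
\begin{align*}
\frac{c_q(\lambda)}{c_q(-\lambda)}=\frac{qz^2-1}{z^2-q}\,,
\end{align*}
which is holomorphic on a neighbourhood of the closed unit disc (its poles $\pm\sqrt q$ lie outside it, as $q\geq 2$) and takes the value $1/q$ at $z=0$; the last summand is brought to the same form by $z\mapsto z^{-1}$. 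Extracting the coefficient of $z^0$ --- equivalently, the residue at the origin, the only pole inside the circle --- from each summand gives $\delta_{mn}$ from the two middle terms and $\tfrac1q\delta_{m0}\delta_{n0}$ from each outer term, so the integral equals $q^{-(m+n)/2}\big(\delta_{mn}+\tfrac1q\delta_{m0}\delta_{n0}\big)$. Comparing with $\tfrac{2\cosh_q(1/2)}{q^{1/2}}=\tfrac{q+1}{q}$ and with $|\partial B_0(o)|=1$, $|\partial B_n(o)|=\tfrac{q+1}{q}q^n$ for $n\geq1$ confirms the orthogonality relation, and substituting back completes the proof.

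I expect the only genuinely delicate point to be the bookkeeping of constants, together with the observation that although $c_q(\lambda)$ and $c_q(-\lambda)$ individually have poles at $\lambda\in\tfrac{\tau}{2}\Z$ (i.e. at $z=\pm1$), every quotient that actually appears above extends holomorphically across those points --- indeed, one checks it vanishes there --- so the passage to a contour integral is unproblematic. This is also where the hypothesis $q\geq 2$ is used, keeping the poles $\pm\sqrt q$ strictly outside the unit circle.
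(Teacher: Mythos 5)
Your proof is correct, and it is worth noting that the paper itself states this lemma without proof (it is the standard Plancherel formula for the Gelfand pair $(G_q,K_q)$, cf.\ Fig\`a-Talamanca--Nebbia), so your argument supplies a genuinely self-contained verification. The reduction to the orthogonality relation for $n\mapsto\omega_\lambda(s_o^n)$ against the Plancherel density is the right move, the identity $c_q(\lambda)/c_q(-\lambda)=(qz^2-1)/(z^2-q)$ is correct, and the residue bookkeeping closes: the four summands contribute $2\delta_{mn}+\tfrac{2}{q}\delta_{m0}\delta_{n0}$ to the constant Fourier coefficient, the factor $\tfrac12$ from doubling the interval halves this to $\delta_{mn}+\tfrac1q\delta_{m0}\delta_{n0}$, and comparison with $\tfrac{2\cosh_q(1/2)}{q^{1/2}}=\tfrac{q+1}{q}$ and $|\partial B_0(o)|=1$, $|\partial B_n(o)|=\tfrac{q+1}{q}q^n$ checks out in all three cases $m=n=0$, $m=n\geq1$, $m\neq n$. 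Two small blemishes, neither of which affects the argument: first, your phrasing ``$\delta_{mn}$ from the two middle terms'' obscures whether the factor $\tfrac12$ has already been absorbed --- each middle term contributes $\delta_{mn}$ before halving --- so spell this out; second, the parenthetical claim that the quotient $c_q(\lambda)/c_q(-\lambda)$ \emph{vanishes} at $z=\pm1$ is false (it equals $-1$ there), but all you need is that it is holomorphic across $z=\pm1$, which it is since its only poles are at $z=\pm\sqrt q$. The handling of the fourth summand, whose poles $\pm q^{-1/2}$ lie \emph{inside} the unit circle, via the substitution $z\mapsto z^{-1}$ (under which the functional $f\mapsto\tfrac{1}{2\pi i}\oint f\,\tfrac{dz}{z}$ is invariant) is exactly the right way to avoid computing those residues.
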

Again, in Cartan coordinates the Plancherel formula reads as
\begin{align*}
|\varphi(0)|^2 + \frac{2 \cosh_q(\frac{1}{2})}{q^{1/2}} \sum_{n = 1}^{\infty} |\varphi(s_o^n)|^2 q^n = \frac{q^{1/2}}{2\cosh_q(\frac{1}{2})} \frac{1}{\tau}  \int_0^{\tau/2} |\hat{\varphi}(\lambda)|^2 \frac{d\lambda}{|c_q(\lambda)|^2} \, .
\end{align*}

\subsection{The spherical transform of the indicator function of a ball}
\label{The spherical transform of the indicator function of a ball}

Computing the spherical transform of the indicator function $\chi_{B_r}$ of the metric ball $B_r$ of radius $r \in \Z_{\geq 0}$ will be crucial for studying the (sub-oscillatory) number variance of a point process $\mu$, which will be introduced in Subsection \ref{Conditional number variances}. 

\begin{lemma}
\label{LemmaFTofIndicatorFunction}
The spherical transform of the indicator function $\chi_{B_r}$ over the centered ball of radius $r \in \Z_{\geq 0}$ is
\begin{align*}
\hat{\chi}_{B_r}(\lambda) = 
\begin{dcases}
\frac{\sin_q(\lambda(r + 1)) + q^{-1/2}\sin_q(\lambda r)}{\sin_q(\lambda)} \, q^{r/2}  &\mbox{ if } \lambda \in \C \backslash \tfrac{\tau}{2} \Z \\
\Big( 1 + (1 + q^{-1/2}) r \Big) q^{r/2} &\mbox{ if } \lambda = \tfrac{\tau j}{2}, \, j \in 2\Z \\
\Big( 1 + (1 - q^{-1/2}) r \Big) (-1)^r q^{r/2} &\mbox{ if } \lambda = \tfrac{\tau j}{2}, \, j \in 2\Z + 1 \, . 
\end{dcases}
\end{align*}
\end{lemma}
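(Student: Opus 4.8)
The plan is to expand $\hat{\chi}_{B_r}$ directly in Cartan coordinates and evaluate the resulting finite sums, treating the generic spherical parameters first and the exceptional ones $\lambda\in\tfrac{\tau}{2}\Z$ afterwards by a limiting argument. Since $\chi_{B_r}$ is bi-$K_q$-invariant with $\chi_{B_r}(s_o^n)=1$ exactly for $0\le n\le r$, the Cartan-coordinate formula for the spherical transform recorded in Subsection~\ref{The spherical transform} gives
\[
\hat{\chi}_{B_r}(\lambda)=\sum_{n=0}^{r}\omega_\lambda(s_o^n)\,|\partial B_n(o)|=1+\frac{q+1}{q}\sum_{n=1}^{r}q^n\,\omega_\lambda(s_o^n).
\]

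For $\lambda\in\C\setminus\tfrac{\tau}{2}\Z$ I would insert the explicit formula from Proposition~\ref{PropositionExplicitFormulaForTheSphericalFunctions}. Writing $w=q^{i\lambda}$, one has $q^n q^{-(1/2\mp i\lambda)n}=(q^{1/2}w^{\pm1})^n$, so the sum splits into two finite geometric series,
\[
\sum_{n=1}^{r}q^n\omega_\lambda(s_o^n)=c_q(\lambda)\sum_{n=1}^{r}(q^{1/2}w)^n+c_q(-\lambda)\sum_{n=1}^{r}(q^{1/2}w^{-1})^n,
\]
each equal to $\big((q^{1/2}w^{\pm1})^{r+1}-q^{1/2}w^{\pm1}\big)/(q^{1/2}w^{\pm1}-1)$. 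The key observation is that the numerator of $c_q(\pm\lambda)$ factors so as to cancel this denominator: from $c_q(\lambda)=\big(q^{1/2}w-q^{-1/2}w^{-1}\big)\big/\big((q^{1/2}+q^{-1/2})(w-w^{-1})\big)$ and $q^{1/2}w-q^{-1/2}w^{-1}=q^{-1/2}w^{-1}(q^{1/2}w-1)(q^{1/2}w+1)$ (and symmetrically for $c_q(-\lambda)$), the geometric sums collapse. Collecting over the common denominator $(q^{1/2}+q^{-1/2})(w-w^{-1})$, using $\tfrac{q+1}{q(q^{1/2}+q^{-1/2})}=q^{-1/2}$, the constant term $1$ cancels a matching term and one is left with
\[
\hat{\chi}_{B_r}(\lambda)=q^{r/2}\,\frac{(w^{r+1}-w^{-r-1})+q^{-1/2}(w^r-w^{-r})}{w-w^{-1}},
\]
which is the claimed formula after writing $w^m-w^{-m}=2i\sin_q(\lambda m)$. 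This bookkeeping is the main computational step; it is elementary, the only real content being that factorization matching the geometric-series denominators.

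For the exceptional parameters $\lambda=\tfrac{\tau}{2}j$ with $j\in\Z$ I would not redo the sum but argue by continuity. Each $\omega_\lambda(s_o^n)$ is an entire function of $\lambda$: the poles of $c_q(\pm\lambda)$ at $\lambda\in\tfrac{\tau}{2}\Z$ arising from the factor $\sin_q(\lambda)^{-1}$ cancel between the two terms of the generic formula, so $\hat{\chi}_{B_r}$, a finite sum of such functions, is entire as well. Hence its value at $\lambda=\tfrac{\tau}{2}j$ equals the limit of $q^{r/2}\big(\sin_q(\lambda(r+1))+q^{-1/2}\sin_q(\lambda r)\big)/\sin_q(\lambda)$ as $\lambda\to\tfrac{\tau}{2}j$. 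Since $q^{i\tau/2}=e^{i\pi}=-1$, this is a $0/0$ limit, and a single application of L'Hôpital in $\lambda$, together with $\cos_q(\tfrac{\tau}{2}jm)=(-1)^{jm}$, yields $\big((r+1)(-1)^{j(r+1)}+q^{-1/2}r(-1)^{jr}\big)(-1)^{j}q^{r/2}$, which simplifies to $(1+(1+q^{-1/2})r)q^{r/2}$ for $j$ even and to $(1+(1-q^{-1/2})r)(-1)^r q^{r/2}$ for $j$ odd.

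The only genuinely subtle point is justifying this last passage to the limit, i.e. the pole cancellation that makes $\omega_\lambda(s_o^n)$ entire in $\lambda$; everything else is routine. As an alternative route (and a useful sanity check) one may bypass the explicit geometric summation: the proposed right-hand side $F_r(\lambda)$ satisfies $F_0(\lambda)=1$, and a short exponential-function computation shows $F_r(\lambda)-F_{r-1}(\lambda)=\tfrac{q+1}{q}q^r\omega_\lambda(s_o^r)=|\partial B_r(o)|\,\omega_\lambda(s_o^r)$; by the telescoping Cartan expansion of $\hat{\chi}_{B_r}$ displayed above this pins down $\hat{\chi}_{B_r}$ uniquely for $\lambda\notin\tfrac{\tau}{2}\Z$, and the same limiting argument then settles $\lambda\in\tfrac{\tau}{2}\Z$.
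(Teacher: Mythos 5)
Your argument is correct, and for the generic parameters $\lambda\notin\tfrac{\tau}{2}\Z$ it is essentially the computation in the paper: expand in Cartan coordinates, sum the two geometric series, and use the factorization of the numerator of $c_q(\pm\lambda)$ against the denominators $q^{1/2}q^{\pm i\lambda}-1$ to collapse everything over the common denominator $(q^{1/2}+q^{-1/2})(q^{i\lambda}-q^{-i\lambda})$. The only divergence is at the exceptional parameters $\lambda\in\tfrac{\tau}{2}\Z$: the paper recomputes the sum directly via Lemma \ref{LemmaLinearTimesExponentialSum} with $z=\pm q^{1/2}$, whereas you take the limit of the generic formula, justified by the observation that each $\omega_\lambda(s_o^n)$ is entire in $\lambda$ (indeed a polynomial in $\alpha_\lambda=2\sqrt{q}\cos_q(\lambda)$ by the recursion \ref{EqSystemOfRecursionForSphericalFunctions}, which is a cleaner justification than checking pole cancellation in the explicit formula), followed by one application of L'H\^opital. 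This is exactly the route sanctioned by the remark following the lemma in the paper, and your sign bookkeeping $(-1)^{jm}$ in the two parities of $j$ checks out, so the variant is sound and arguably slightly more economical than the paper's second and third cases.
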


\begin{remark}
Using the definition of the spherical transform, we can express the spherical functions as 
\begin{align*}
\omega_{\lambda}(s_o^n) = \frac{1}{1 + q^{-1}} \Big(\hat{\chi}_{B_r}(\lambda) - \hat{\chi}_{B_{r - 1}}(\lambda)\Big) q^{-r} \, . 
\end{align*}
By Lemma \ref{LemmaFTofIndicatorFunction}, we get
\begin{align*} 
\omega_{\lambda}(s_o^n) = 
\begin{dcases}
\frac{\sin_q((r + 1)\lambda) - q^{-1}\sin_q((r - 1)\lambda)}{(1 + q^{-1})\sin_q(\lambda)} \, q^{-r/2}  &\mbox{ if } \lambda \in \C \backslash \tfrac{\tau}{2} \Z \\
\Big( 1 + \frac{1 - q^{-1}}{1 + q^{-1}} r \Big) q^{-r/2} &\mbox{ if } \lambda = \tfrac{\tau j}{2}, \, j \in 2\Z \\
\Big( 1 + \frac{1 - q^{-1}}{1 + q^{-1}} r \Big) (-1)^r q^{-r/2} &\mbox{ if } \lambda = \tfrac{\tau j}{2}, \, j \in 2\Z + 1 \, ,
\end{dcases}
\end{align*}
which is an alternative formula to that in Proposition \ref{PropositionExplicitFormulaForTheSphericalFunctions}. 
\end{remark}

\begin{lemma}
\label{LemmaLinearTimesExponentialSum}
Let $a, z \in \C$ and $r \in \Z_{\geq 1}$. Then
\begin{align*}
\sum_{n = 1}^r (1 + an)z^n = \frac{az}{z - 1} r z^r + \frac{z(z - 1 - a)}{(z - 1)^2} (z^r  - 1) \, . 
\end{align*}
\end{lemma}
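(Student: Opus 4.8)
The plan is to reduce the claimed identity to the standard finite geometric series $\sum_{n=1}^r z^n = \frac{z(z^r-1)}{z-1}$ together with its derivative, avoiding any induction. First I would split the sum linearly as
\begin{align*}
\sum_{n=1}^r (1+an)z^n = \sum_{n=1}^r z^n + a \sum_{n=1}^r n z^n \, ,
\end{align*}
so that the task becomes evaluating $\sum_{n=1}^r n z^n$. For this I would start from $P(z) := \sum_{n=1}^r z^n = \frac{z^{r+1}-z}{z-1}$ and use $\sum_{n=1}^r n z^n = z P'(z)$. Differentiating the closed form of $P(z)$ via the quotient rule gives
\begin{align*}
P'(z) = \frac{\big((r+1)z^r - 1\big)(z-1) - (z^{r+1}-z)}{(z-1)^2} \, ,
\end{align*}
and multiplying by $z$ and simplifying the numerator should collect into the stated shape. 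Concretely, I expect the numerator of $z P'(z)$, after expanding, to organize as $r z^{r+1}(z-1) - z(z^r-1)$ divided by $(z-1)^2$ — i.e. $\sum_{n=1}^r n z^n = \frac{r z^{r+1}}{z-1} - \frac{z(z^r-1)}{(z-1)^2}$.

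Then I would assemble the two pieces: adding $P(z) = \frac{z(z^r-1)}{z-1}$ and $a$ times the expression just obtained, group all terms with a factor $r z^r$ together and all the remaining terms (which are proportional to $z^r - 1$) together. The $r z^r$ terms contribute $\frac{az}{z-1} r z^r$ directly. For the rest, combining $\frac{z(z^r-1)}{z-1} - \frac{a z (z^r-1)}{(z-1)^2}$ over the common denominator $(z-1)^2$ yields $\frac{z\big((z-1) - a\big)}{(z-1)^2}(z^r-1)$, which is exactly $\frac{z(z-1-a)}{(z-1)^2}(z^r-1)$. This matches the claimed right-hand side.

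There is essentially no conceptual obstacle here; the only point requiring care is the algebraic bookkeeping in the differentiation and the final regrouping, and the observation that the formula as written implicitly treats $z \neq 1$ (so that the denominators make sense) — the case $z=1$, if needed, reduces to $\sum_{n=1}^r (1+an) = r + a\tfrac{r(r+1)}{2}$ and is handled separately by taking the limit $z \to 1$ in the right-hand side. The main ``obstacle'' is therefore just ensuring the numerator of $zP'(z)$ is expanded and recollected without sign errors; everything else is routine. This is why I would present the computation compactly rather than grinding through every intermediate expansion.
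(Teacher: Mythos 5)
Your proof is correct, and since the paper dismisses this lemma with the single line ``Straightforward computation,'' your derivation via the geometric series $P(z)=\sum_{n=1}^r z^n$ and the identity $\sum_{n=1}^r n z^n = zP'(z)$ is precisely the kind of routine verification intended; the regrouping into the $\frac{az}{z-1}rz^r$ and $\frac{z(z-1-a)}{(z-1)^2}(z^r-1)$ terms checks out. Your remark that the formula implicitly assumes $z\neq 1$ is also apt and harmless here, since the paper only applies the lemma with $z=\pm q^{1/2}$, $q\geq 2$.
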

\begin{proof}
Straightforward computation.
\end{proof}

\begin{proof}[Proof of Lemma \ref{LemmaFTofIndicatorFunction}]
When $\lambda \in \C \backslash \frac{\tau}{2} \Z$, we have 
\begin{align*}
\hat{\chi}_{B_r}(\lambda) &= 1 + \frac{2 \cosh_q(\frac{1}{2})}{q^{1/2}} \Big( c_q(\lambda) \sum_{n = 1}^r q^{(\frac{1}{2} + i\lambda)n} + c_q(-\lambda) \sum_{n = 1}^r q^{(\frac{1}{2} - i\lambda)n} \Big) \\
&= 1 + \frac{2 \cosh_q(\frac{1}{2})}{q^{1/2}} \Big( c_q(\lambda) q^{\frac{1}{2} + i\lambda} \frac{q^{(\frac{1}{2} + i\lambda)r} - 1}{q^{\frac{1}{2} + i\lambda} - 1} + c_q(-\lambda) q^{\frac{1}{2} - i\lambda} \frac{q^{(\frac{1}{2} - i\lambda)r} - 1}{q^{\frac{1}{2} - i\lambda} - 1} \Big) \, . 
\end{align*}
Note that
\begin{align*}
c_q(\lambda) q^{\frac{1}{2} + i\lambda} \frac{q^{(\frac{1}{2} + i\lambda)r} - 1}{q^{\frac{1}{2} + i\lambda} - 1} &= \frac{(q^{\frac{1}{2} + i\lambda} + 1)(q^{(\frac{1}{2} + i\lambda)r} - 1)}{4\cosh_q(\frac{1}{2}) \sinh_q(i\lambda)}  = \frac{q^{(\frac{1}{2} + i\lambda)(r + 1)} + q^{(\frac{1}{2} + i\lambda)r} - q^{\frac{1}{2} + i\lambda} - 1}{4\cosh_q(\frac{1}{2}) \sinh_q(i\lambda)}
\end{align*}
so that
\begin{align*}
c_q(\lambda) q^{\frac{1}{2} + i\lambda} \frac{q^{(\frac{1}{2} + i\lambda)r} - 1}{q^{\frac{1}{2} + i\lambda} - 1} &+ c_q(-\lambda) q^{\frac{1}{2} - i\lambda} \frac{q^{(\frac{1}{2} - i\lambda)r} - 1}{q^{\frac{1}{2} - i\lambda} - 1} = \\
&= \frac{q^{1/2}\sinh_q(i\lambda(r + 1)) + \sinh_q(i\lambda r)}{2 \cosh_q(\frac{1}{2}) \sinh_q(i\lambda)} q^{r/2} - \frac{q^{1/2}}{2 \cosh_q(\frac{1}{2})} \, . 
\end{align*}
Plugging this into the formula for $\hat{\chi}_{B_r}$ we get 
\begin{align*}
\hat{\chi}_{B_r}(\lambda) &= 1 + \frac{2 \cosh_q(\frac{1}{2})}{q^{1/2}} \Big( \frac{q^{1/2}\sinh_q(i\lambda(r + 1)) + \sinh_q(i\lambda r)}{2 \cosh_q(\frac{1}{2}) \sinh_q(i\lambda)} q^{r/2} - \frac{q^{1/2}}{2 \cosh_q(\frac{1}{2})} \Big) \\
&= \frac{\sinh_q(i\lambda(r + 1)) + q^{-1/2}\sinh_q(i\lambda r)}{\sinh_q(i\lambda)} q^{r/2} = \frac{\sin_q(\lambda(r + 1)) + q^{-1/2}\sin_q(\lambda r)}{\sin_q(\lambda)} q^{r/2} \, . 
\end{align*}

For the second case when $\lambda = \tau j /2$, $j \in \Z$ even, we have 
\begin{align*}
\hat{\chi}_{B_r}(\lambda) &= 1 + \frac{2 \cosh_q(\frac{1}{2})}{q^{1/2}} \sum_{n = 1}^r \Big(1 + \tanh_q\Big(\frac{1}{2}\Big) n\Big) q^{n/2}  
\end{align*}
We use Lemma \ref{LemmaLinearTimesExponentialSum} with $z = q^{1/2}$ and $a = \tanh_q(\frac{1}{2})$ to find that
\begin{align*}
\sum_{n = 1}^r \Big(1 + \tanh_q\Big(\frac{1}{2}\Big) n\Big) q^{n/2} = \frac{\tanh_q(\frac{1}{2}) q^{1/2}}{q^{1/2} - 1} r q^{r/2} + \frac{q^{1/2}(q^{1/2} - 1 - \tanh_q(\frac{1}{2}))}{(q^{1/2} - 1)^2} (q^{r/2} - 1) \, . 
\end{align*}
We rewrite the coefficients in the equation above as
\begin{align*}
\frac{\tanh_q(\frac{1}{2}) q^{1/2}}{q^{1/2} - 1} = \frac{q - 1}{(q^{1/2} + q^{-1/2})(q^{1/2} - 1)} = \frac{q^{1/2} + 1}{q^{1/2} + q^{-1/2}} = \frac{q^{1/4}\cosh_q(\frac{1}{4})}{\cosh_q(\frac{1}{2})} 
\end{align*}
and 
\begin{align*}
\frac{q^{1/2}(q^{1/2} - 1 - \tanh_q(\frac{1}{2}))}{(q^{1/2} - 1)^2} &= q^{1/2}\frac{(q^{1/2} + q^{-1/2})(q^{1/2} - 1) - (q^{1/2} - q^{-1/2})}{(q^{1/2} + q^{-1/2})(q^{1/2} - 1)^2} \\
&= \frac{q^{1/2}}{q^{1/2} + q^{-1/2}} = \frac{q^{1/2}}{2\cosh_q(\frac{1}{2})} \, . 
\end{align*}
Finally we get that
\begin{align*}
\hat{\chi}_{B_r}(\lambda) &= 1 + \frac{2 \cosh_q(\frac{1}{2})}{q^{1/2}} \Big( \frac{q^{1/4}\cosh_q(\frac{1}{4})}{\cosh_q(\frac{1}{2})} r q^{r/2} + \frac{q^{1/2}}{2\cosh_q(\frac{1}{2})} (q^{r/2} - 1) \Big) \\
&= \Big( 1 + \frac{2 \cosh_q(\frac{1}{4})}{q^{1/4}} r \Big) q^{r/2} = \Big( 1 + (1 + q^{-1/2}) r \Big) q^{r/2} \, . 
\end{align*}

For $\lambda = \tau j /2$, where $j \in \Z$ is odd, one makes use of Lemma \ref{LemmaLinearTimesExponentialSum} with $z = -q^{1/2}$ and $a = \tanh_q(\frac{1}{2})$ and perform the same computations as in the second case to obtain the desired formula. 
%
\end{proof}

\begin{remark}
The second and third formula in Lemma \ref{LemmaFTofIndicatorFunction} can be obtained from the first formula by taking the limit $\lambda \rightarrow 0, \tau/2$ respectively. This implies continuity of $\hat{\chi}_{B_r}$.
\end{remark}

For the proof of non-geometric hyperuniformity in Section \ref{Non-geometric hyperuniformity} we will need the following lemmata.

\begin{lemma}
\label{LemmaCosineAsymptoticAverge}
Let $\lambda \in \R \backslash \frac{\tau}{2} \Z$, $a \in \{1, 2\}$ and $b \in \C$. Then
\begin{align*}
\lim_{R \rightarrow +\infty} \frac{1}{R} \sum_{r = 0}^{R} \cos_q(\lambda(ar + b)) = 0 \, . 
\end{align*}
\end{lemma}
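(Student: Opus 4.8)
The plan is to reduce the claim to the elementary fact that the Cesàro averages of $\cos(n\theta)$ over $n = 0, \dots, R$ tend to $0$ whenever $\theta \notin 2\pi\Z$. Writing $\cos_q(\lambda(ar+b)) = \cos(\log(q)\lambda(ar+b)) = \tfrac{1}{2}\bigl(q^{i\lambda(ar+b)} + q^{-i\lambda(ar+b)}\bigr)$, I would first use the angle-addition identity to separate the $r$-dependence from the constant shift $b$:
\begin{align*}
\cos_q(\lambda(ar+b)) = \cos_q(\lambda a r)\cos_q(\lambda b) - \sin_q(\lambda a r)\sin_q(\lambda b) \, .
\end{align*}
Since $\cos_q(\lambda b)$ and $\sin_q(\lambda b)$ are fixed complex constants, it suffices to show that $\tfrac{1}{R}\sum_{r=0}^{R}\cos_q(\lambda a r) \to 0$ and $\tfrac{1}{R}\sum_{r=0}^{R}\sin_q(\lambda a r) \to 0$, i.e. that $\tfrac{1}{R}\sum_{r=0}^{R} q^{\pm i\lambda a r} \to 0$.

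Next I would observe that $q^{i\lambda a r} = w^r$ where $w = q^{i\lambda a} = e^{i a \log(q)\lambda}$ has modulus $1$ since $\lambda \in \R$. The hypothesis $\lambda \notin \tfrac{\tau}{2}\Z$ with $\tau = 2\pi/\log(q)$ means $\log(q)\lambda \notin \pi\Z$, hence $a\log(q)\lambda \notin 2\pi\Z$ for $a \in \{1,2\}$ — here the restriction $a \in \{1,2\}$ is exactly what is needed, since $a\log(q)\lambda \in 2\pi\Z$ would force $\log(q)\lambda \in \tfrac{2\pi}{a}\Z \subseteq \pi\Z$ for those two values of $a$. Therefore $w \neq 1$, and the geometric sum gives
\begin{align*}
\Bigl| \sum_{r=0}^{R} w^r \Bigr| = \left| \frac{w^{R+1} - 1}{w - 1} \right| \leq \frac{2}{|w - 1|} \, ,
\end{align*}
a bound independent of $R$. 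Dividing by $R$ and letting $R \to +\infty$ yields the limit $0$; the same argument applies to $w^{-1} = q^{-i\lambda a}$, and assembling the four pieces (two signs, two trig functions) with the bounded constants $\cos_q(\lambda b), \sin_q(\lambda b)$ completes the proof.

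There is essentially no obstacle here: the only point requiring a moment's care is verifying that the hypotheses genuinely preclude $w = 1$, which is where the precise form of the excluded set $\tfrac{\tau}{2}\Z$ and the restriction $a \in \{1,2\}$ enter. Everything else is the standard equidistribution/geometric-series estimate. I would keep the write-up short, presenting the angle-addition split, the geometric-sum bound, and the conclusion in a few lines.
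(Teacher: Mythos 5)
Your proposal is correct and follows essentially the same route as the paper: both reduce the claim to a geometric sum with ratio $q^{i\lambda a}\neq 1$ (which is where $\lambda\notin\tfrac{\tau}{2}\Z$ and $a\in\{1,2\}$ enter), bound it by a constant independent of $R$, and divide by $R$. Your angle-addition split is in fact a slightly more careful way of isolating the complex shift $b$ than the paper's compact $\Re\big(q^{i\lambda b}\sum_r q^{i\lambda a r}\big)$ formulation, but the substance of the argument is identical.
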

\begin{proof}
This is a simple consequence of the formula for geometric sums,
\begin{align*}
\sum_{r = 0}^{R} \cos_q(\lambda(ar + b)) = \Re\Big( q^{i\lambda b} \sum_{r = 0}^{R} q^{i\lambda a r} \Big) = \Re\Big( q^{i\lambda b} \frac{q^{i\lambda a (R + 1)} - 1}{q^{i \lambda a} - 1} \Big) \leq \frac{2 q^{|\lambda||b|}}{|q^{i \lambda a} - 1|} \, . 
\end{align*}
The latter is finite since $\lambda a$ is not a multiple of $\tau$ and so we are done.
\end{proof}

\begin{lemma}
\label{LemmaAsymptoticMeanOfIndicatorFTSquared}
Let $\lambda \in (0, \tfrac{\tau}{2})$. Then
\begin{align*}
\lim_{R \rightarrow +\infty} \frac{1}{R} \sum_{r = 0}^{R} \frac{|\hat{\chi}_{B_r}(\lambda)|^2}{q^r} = \frac{1 + 2q^{-1/2} \cos_q(\lambda) + q^{-1}}{2 \sin_q^2(\lambda)} \, . 
\end{align*}
Moreover, for every $\varepsilon > 0$ the convergence is uniform over all $\lambda \in [\varepsilon, \tfrac{\tau}{2} - \varepsilon]$.
\end{lemma}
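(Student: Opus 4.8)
\emph{Proof proposal.} The plan is to insert the closed form from Lemma~\ref{LemmaFTofIndicatorFunction} and reduce the Ces\`aro limit to Lemma~\ref{LemmaCosineAsymptoticAverge}. Since $\lambda\in(0,\tfrac{\tau}{2})\subset\R\setminus\tfrac{\tau}{2}\Z$ we are in the first case of that lemma, and the quantities $\sin_q(\lambda)$, $\sin_q(\lambda r)$, $\sin_q(\lambda(r+1))$ are all real, so
\begin{align*}
\frac{|\hat{\chi}_{B_r}(\lambda)|^2}{q^r}=\frac{\big(\sin_q(\lambda(r+1))+q^{-1/2}\sin_q(\lambda r)\big)^2}{\sin_q^2(\lambda)}.
\end{align*}
Expanding the square yields $\sin_q^2(\lambda(r+1))+q^{-1}\sin_q^2(\lambda r)+2q^{-1/2}\sin_q(\lambda(r+1))\sin_q(\lambda r)$ in the numerator.

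Next I would apply $\sin_q^2(\theta)=\tfrac{1}{2}(1-\cos_q(2\theta))$ and $\sin_q(A)\sin_q(B)=\tfrac{1}{2}(\cos_q(A-B)-\cos_q(A+B))$ to split this numerator into a constant plus an oscillatory remainder,
\begin{align*}
\tfrac{1}{2}\big(1+2q^{-1/2}\cos_q(\lambda)+q^{-1}\big)-\tfrac{1}{2}\cos_q(2\lambda(r+1))-q^{-1/2}\cos_q(\lambda(2r+1))-\tfrac{q^{-1}}{2}\cos_q(2\lambda r).
\end{align*}
Each oscillatory term has the form $\cos_q(\lambda(ar+b))$ with $a=2$ and $b\in\{0,1,2\}$, so Lemma~\ref{LemmaCosineAsymptoticAverge} applies (using $\lambda\notin\tfrac{\tau}{2}\Z$) and kills their Ces\`aro averages as $R\to+\infty$. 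The constant part contributes $\tfrac{R+1}{R}\cdot\tfrac{1}{2}(1+2q^{-1/2}\cos_q(\lambda)+q^{-1})\to\tfrac{1}{2}(1+2q^{-1/2}\cos_q(\lambda)+q^{-1})$, and dividing through by $\sin_q^2(\lambda)$ gives the stated limit.

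For the uniform convergence on $[\varepsilon,\tfrac{\tau}{2}-\varepsilon]$ I would revisit the bound in the proof of Lemma~\ref{LemmaCosineAsymptoticAverge}, namely $\big|\tfrac{1}{R}\sum_{r=0}^{R}\cos_q(\lambda(ar+b))\big|\le\tfrac{1}{R}\cdot\tfrac{2q^{|\lambda||b|}}{|q^{i\lambda a}-1|}$. On the compact interval, with $a=2$ and $b\le2$, the numerator is uniformly bounded, while $|q^{2i\lambda}-1|$ is bounded below by a positive constant depending only on $\varepsilon$ since $2\lambda$ ranges over $[2\varepsilon,\tau-2\varepsilon]$, a fixed distance from the points $0$ and $\tau$ at which $q^{2i\lambda}=1$; likewise $\sin_q^2(\lambda)$ is bounded below there. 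Thus the total error is $O(1/R)$ uniformly in $\lambda$. I do not anticipate a genuine obstacle here: the computation is elementary, and the only point requiring care is the uniform bookkeeping---verifying $2\lambda\notin\tau\Z$ over the whole interval and that dividing by the uniformly-positive $\sin_q^2(\lambda)$ preserves uniformity.
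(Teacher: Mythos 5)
Your proposal is correct and follows essentially the same route as the paper: insert the closed form from Lemma \ref{LemmaFTofIndicatorFunction}, expand the square with the product-to-sum and double-angle identities into the constant $\tfrac{1}{2}(1+2q^{-1/2}\cos_q(\lambda)+q^{-1})$ plus three oscillatory cosines, and kill the latter via Lemma \ref{LemmaCosineAsymptoticAverge}. The only difference is the uniformity step, where you give an explicit $O(1/R)$ bound using the uniform lower bounds on $|q^{2i\lambda}-1|$ and $\sin_q^2(\lambda)$ over $[\varepsilon,\tfrac{\tau}{2}-\varepsilon]$; this is more quantitative (and arguably more complete) than the paper's brief appeal to compactness and continuity, and is perfectly valid.
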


\begin{proof}
By the formula for $\hat{\chi}_{B_r}$ in the first case of Lemma \ref{LemmaFTofIndicatorFunction}, we have 
\begin{align*}
\frac{1}{R} \sum_{r = 0}^{R} \frac{|\hat{\chi}_{B_r}(\lambda)|^2}{q^r} &= \frac{1}{R \sin_q^2(\lambda)} \sum_{r = 0}^{R} (\sin_q(\lambda(r + 1)) + q^{-1/2}\sin_q(\lambda r))^2 \, . 
\end{align*}
Using the trigonometric identities $2 \sin_q(\alpha) \sin_q(\beta) = \cos_q(\alpha - \beta) - \cos_q(\alpha + \beta)$ and $2\sin_q^2(\alpha) = 1 + \cos_q(2\alpha)$ for $\alpha, \beta \in \R$ we get that
\begin{align*}
(\sin_q(\lambda(r + 1)) + q^{-1/2}\sin_q(\lambda r))^2 &= \sin_q^2(\lambda(r + 1)) + q^{-1/2}(\cos_q(\lambda) - \cos_q(\lambda(2 r + 1))) \\
&\qquad\qquad\qquad\quad\,\, + q^{-1}\sin_q^2(\lambda r) \\
&= \frac{1 + 2q^{-1/2} \cos_q(\lambda) + q^{-1}}{2} - \frac{1}{q^{1/2}}\cos_q(\lambda(2 r + 1)) \\
&\quad\, - \frac{1}{2} \cos_q(2\lambda(r + 1)) - \frac{1}{2q} \cos_q(2\lambda r) \, . 
\end{align*}
If we sum the latter expression over $r$ from $0$ to $R$ and divide by $R \sin_q^2(\lambda)$, then Lemma \ref{LemmaCosineAsymptoticAverge} tells us that
\begin{align*}
\frac{1}{R} \sum_{r = 0}^{R} \frac{|\hat{\chi}_{B_r}(\lambda)|^2}{q^r} = \, &\frac{1 + 2q^{-1/2} \cos_q(\lambda) + q^{-1}}{2 \sin_q^2(\lambda)}  - \frac{1}{q^{1/2} R \sin_q^2(\lambda)} \sum_{r = 0}^{R} \cos_q(\lambda(2 r + 1))\\
&- \frac{1}{2 R \sin_q^2(\lambda)} \sum_{r = 0}^{R} \cos_q(2\lambda(r + 1)) - \frac{1}{2 q R \sin_q^2(\lambda)} \sum_{r = 0}^{R} \cos_q(2 \lambda r) \\
& \longrightarrow \frac{1 + 2q^{-1/2} \cos_q(\lambda) + q^{-1}}{2 \sin_q^2(\lambda)} 
\end{align*}
as $R \rightarrow +\infty$. Uniform convergence for $\lambda \in [\varepsilon, \tfrac{\tau}{2} - \varepsilon]$ follows from compactness and the continuity of $\hat{\chi}_{B_r}(\lambda)$ in $\lambda$.
\end{proof}

\section{Invariant point processes, diffraction and number variances}
\label{Invariant point processes, diffraction and number variances}

We define invariant locally square-integrable point processes, followed by diffraction measures and conditional number variances in Subsections \ref{Invariant point processes}, \ref{Diffraction measures} and \ref{Conditional number variances} respectively. We conclude the Section by defining the unit intensity invariant Poisson point process and invariant random lattice orbits.
\subsection{Invariant point processes}
\label{Invariant point processes}
Let $\Radonplus(\qplusonetree)$ denote the space of locally finite positive measures on $\qplusonetree$ with the topology of pointwise convergence, that is, $p_n \rightarrow p$ in $\Radonplus(\qplusonetree)$ if $p_n(\{x\}) \rightarrow p(\{x\})$ for all $x \in \qplusonetree$. The automorphism group $G_q$ acts on $\Radonplus(\qplusonetree)$ by push-forward of measures. We write $p(x) := p(\{x\})$ and note that every $p \in \Radonplus(\qplusonetree)$ can be written as 
\begin{align*}
p = \sum_{x \in \qplusonetree} p(x) \delta_x  
\end{align*}
where $\delta_x$ denotes the unit point mass at $x \in \qplusonetree$, which canonically identifies the space $\Radonplus(\qplusonetree)$ with the space positive functions $\qplusonetree \rightarrow \R_{\geq 0}$ under pointwise convergence. To every finitely supported function $f \in \Borelbndinfty(\qplusonetree)$ we associate a \emph{linear statistic} $\bS f : \Radonplus(\qplusonetree) \rightarrow \C$ by
\begin{align*}
\bS f (p) = \sum_{x \in \qplusonetree} p(x) f(x) \, . 
\end{align*}
Since every pair of distinct vertices in $\qplusonetree$ are of at least distance $1$ from each other, all subsets of $\qplusonetree$ are uniformly discrete, so if the function $x \mapsto p(x)$ is uniformly bounded then the linear statistic $\bS f$ applied to $p$ is bounded by
\begin{align*}
|\bS f(p)| \leq \max_{x \in \qplusonetree} p(\{x\}) \, \norm{f}_{\infty} \, . 
\end{align*}
By a \emph{point process} on $\qplusonetree$ we mean a Borel probability measure $\mu$ on $\Radonplus(\qplusonetree)$. Such a point process $\mu$ is \emph{simple} if it assigns full measure to the subspace
\begin{align*}
\Radonplus^{\rS}(\qplusonetree) = \Big\{ p \in \Radonplus(\qplusonetree) \, | \, p(x) \in \{0, 1\} \,\,\, \forall \, x \in \qplusonetree  \Big\} \, . 
\end{align*}
A point process $\mu$ is 
\begin{itemize}
    \item \emph{invariant} if $g_*\mu = \mu$ for all $g \in \Gqplusone$.
    \item \emph{locally $k$-integrable}, $k \in \N$, if for every $x \in \qplusonetree$,
    $$ \int_{\Radonplus(\qplusonetree)} p(x)^k d\mu(p) < + \infty \, . $$
    In terms of linear statistics, this means that $\bS\chi_B \in L^k(\Radonplus(\qplusonetree), \mu)$ for all finite $B \subset V_{\qplusonetree}$. Note that a locally $k$-integrable random measure $\mu$ is locally $k'$-integrable for every $1 \leq k' \leq k$. If $\bS\chi_B \in L^{\infty}(\Radonplus(\qplusonetree), \mu)$ for all finite $B$, then we say that $\mu$ is \emph{locally bounded}.
    \item \emph{translation bounded} if it is invariant and locally bounded.
\end{itemize}
Some observations that follow are the following.
\begin{itemize}
    \item Every simple point process $\mu$ in $\qplusonetree$ is locally bounded since $\bS\chi_B(p) \leq |B|$ for $\mu$-almost every $p \in \Radonplus(\qplusonetree)$. In particular, every invariant simple point process is translation bounded.
    \item If $\mu$ is an invariant locally integrable point process, then there is a constant $\iota_{\mu} > 0$, the \emph{intensity} of $\mu$, such that
    \begin{align*}
    \Emu(\bS f) = \int_{\Radonplus(\qplusonetree)} \bS f(p) d\mu(p) = \iota_{\mu} \sum_{x \in \qplusonetree} f(x) \, . 
    \end{align*}
    \item If $\mu$ is an invariant locally square-integrable point process, then the variance of a linear statistic $\bS f$ with respect to $\mu$ is
    \begin{align*}
    \Var_{\mu}(\bS f) &= \Emu(|\bS f - \Emu(\bS f)|^2) = \Emu(|\bS f|^2) - |\Emu(\bS f)|^2 \\
    &= \int_{\Radonplus(\qplusonetree)} \Big| \sum_{x \in \qplusonetree} p(x) f(x)\Big|^2 d\mu(p) - \iota_{\mu}^2 \Big| \sum_{x \in \qplusonetree} f(x) \Big|^2 \, . 
    \end{align*}
    \item If $\mu$ is an invariant locally square-integrable point process, then the action of $G_q$ on $\Radonplus(\qplusonetree)$ yields a \emph{unitary Koopman representation} $\pi_{\mu} : G_q \rightarrow \sU(L^2(\Radonplus(\qplusonetree), \mu))$,
    \begin{align*}
    \pi_{\mu}(g)\bS f(p) = \bS f(g^{-1}_*p) \, . 
    \end{align*}
    The diffraction measure, defined in the next Subsection, records the decomposition of the $K_q$-invariant linear statistics into $K_q$-spherical unitary irreducible subrepresentations of $\pi_{\mu}$. 
\end{itemize}

\textbf{Assumption}: We will from now on assume that every point process to be invariant and locally square-integrable, unless stated otherwise.

\subsection{Diffraction measures}
\label{Diffraction measures}

Recall the subset $\Lambda_q \subset \C$ of spherical parameters corresponding to the positive-definite spherical functions, 
\begin{align*}
\Lambda_q = i[0, 1/2] \cup (0, \tau/2) \cup (\tau/2 + i[0, 1/2]) \, . 
\end{align*}
In an upcoming paper together with M. Björklund we will prove the existence and uniqueness of the diffraction measure of an invariant locally square-integrable point process on any locally compact second countable commutative space associated with a Gelfand pair. In the special case of regular trees we have the following formulation.  
\begin{theorem}
\label{ThmDiffractionMeasure}
Let $\mu$ be an invariant locally square-integrable point process on $\qplusonetree$. Then there is a unique positive finite Borel measure $\sigma_{\mu}$ on $\Lambda_q \subset \C$ such that for every $f \in \Borelbndinfty(\qplusonetree)$,
\begin{align*}
\int_{\Radonplus(\qplusonetree)} |\bS f (p)|^2 d\mu(p) = \int_{\Lambda_q} |\hat{\varphi}_f(\lambda)|^2 d\sigma_{\mu}(\lambda) \, . 
\end{align*}
Moreover, $\sigma_{\mu}(\{i/2\}) = \iota_{\mu}^2$. 
\end{theorem}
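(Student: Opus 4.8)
The plan is to realise $\sigma_\mu$ as the push-forward to $\Lambda_q$ of the scalar spectral measure of the Hecke operator $\pi_\mu(\chi_o)$ at the $K_q$-fixed vector $\bS\chi_{\{o\}}$ in the Koopman representation, and then to extract the atom at $i/2$ from the large-distance asymptotics of the spherical functions. First I would work in the Koopman representation $\pi_\mu$ of $G_q$ on $\cH_\mu := L^2(\Radonplus(\qplusonetree), \mu)$ and single out $\xi := \bS\chi_{\{o\}}$, which lies in $\cH_\mu$ precisely because $\mu$ is locally square-integrable, with $\|\xi\|_{\cH_\mu}^2 = \int p(\{o\})^2\, d\mu(p) < +\infty$. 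Since $\pi_\mu(k)\xi = \bS\chi_{\{k.o\}} = \xi$ for $k \in K_q$, the diagonal matrix coefficient
\begin{align*}
P_\mu(g) := \langle \pi_\mu(g)\xi, \xi\rangle_{\cH_\mu} = \int_{\Radonplus(\qplusonetree)} p(\{g.o\})\, p(\{o\})\, d\mu(p)
\end{align*}
is a positive-definite, bi-$K_q$-invariant, locally constant (hence continuous) function on $G_q$, which by the Cartan decomposition is encoded by the pair-correlation sequence $\rho_\mu(n) = P_\mu(s_o^n)$, $n \geq 0$.

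For existence and uniqueness of $\sigma_\mu$, set $B := \pi_\mu(\chi_o)$ with $\chi_o = \chi_{K_q s_o K_q}$. By Lemma \ref{LemmaInversionofKbyKCosets} we have $\chi_o^* = \chi_o$, so $B$ is self-adjoint, and since $\|\pi_\mu(\varphi)\| \leq \|\varphi\|_{L^1(m_{G_q})}$ in any unitary representation while $m_{G_q}(K_q s_o K_q) = |\partial B_1(o)| = q + 1$, the spectrum of $B$ is contained in $[-(q+1), q+1]$. Let $\nu := \langle E_B(\cdot)\xi, \xi\rangle$ be its scalar spectral measure at $\xi$ — a finite positive Borel measure on $[-(q+1), q+1]$ of total mass $\|\xi\|^2$, with $\langle p(B)\xi, \xi\rangle = \int p\, d\nu$ for every polynomial $p$ — and define $\sigma_\mu$ to be its push-forward under the inverse of the homeomorphism $\Lambda_q \ni \lambda \mapsto \alpha_\lambda = 2\sqrt{q}\cos_q(\lambda) \in [-(q+1), q+1]$. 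By Lemma \ref{LemmaHeckeAlgebraGenerator} every $\varphi \in \sH(G_q, K_q)$ is a polynomial $p_\varphi$ in $\chi_o$, and since the spherical transform is multiplicative on the Hecke algebra with $\hat\chi_o(\lambda) = \alpha_\lambda$, we have $\hat\varphi(\lambda) = p_\varphi(\alpha_\lambda)$. Hence, writing $\varphi_f(g) = f(g.o)$ and noting $\bS f = \pi_\mu(\varphi_f)\xi$, for radial $f$ we obtain
\begin{align*}
\int_{\Radonplus(\qplusonetree)} |\bS f(p)|^2 \, d\mu(p) = \|\pi_\mu(\varphi_f)\xi\|^2 = \langle p_{\varphi_f}(B)^{*} p_{\varphi_f}(B)\xi, \xi\rangle = \int |p_{\varphi_f}(\alpha)|^2 \, d\nu(\alpha) = \int_{\Lambda_q} |\hat\varphi_f(\lambda)|^2 \, d\sigma_\mu(\lambda).
\end{align*}
A general finitely supported $f$ is handled the same way after replacing $f$ by its $K_q$-average, since $\hat\varphi_f$ depends only on the radial part of $f$; equivalently, what is needed in the sequel is the sesquilinear identity $\langle \bS\varphi_1, \bS\varphi_2\rangle_{L^2(\mu)} = \int_{\Lambda_q} \hat\varphi_1 \overline{\hat\varphi_2}\, d\sigma_\mu$ for $\varphi_1, \varphi_2 \in \sH(G_q, K_q)$. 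Uniqueness follows because the functions $\{\hat\varphi : \varphi \in \sH(G_q, K_q)\}$ are exactly the polynomials in $\alpha_\lambda$, which separate points of $\Lambda_q$ and are therefore dense in $C(\Lambda_q)$ by the Stone--Weierstrass theorem.

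To pin down the atom at $i/2$, recall from Proposition \ref{PropositionExplicitFormulaForTheSphericalFunctions} that $\omega_{i/2} \equiv 1$, that $\omega_{\tau/2 + i/2}(s_o^n) = (-1)^n$, and that $\omega_\lambda(s_o^n) \to 0$ as $n \to +\infty$ for every other $\lambda \in \Lambda_q$ (the slowest-decaying term is of size $q^{-n(1/2 - \Im\lambda)}$, which vanishes exactly when $\Im\lambda < 1/2$). Since $|\omega_\lambda(s_o^n)| \leq 1$ and $\sigma_\mu$ is finite, dominated convergence applied to $P_\mu(s_o^n) = \int_{\Lambda_q} \omega_\lambda(s_o^n)\, d\sigma_\mu(\lambda)$ yields $\tfrac12\big(P_\mu(s_o^n) + P_\mu(s_o^{n+1})\big) \to \sigma_\mu(\{i/2\})$. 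On the other hand, $\sigma_\mu(\{i/2\})$ is the squared norm of the orthogonal projection of $\xi$ onto the space of $G_q$-fixed vectors in $\cH_\mu$ — the atom at $\lambda = i/2$ being precisely the trivial-representation isotypic component of the cyclic submodule of $\cH_\mu$ generated by $\xi$ — and since the constant $1 \in \cH_\mu$ is $G_q$-fixed with $\|1\|_{\cH_\mu} = 1$ and $\langle \xi, 1\rangle_{\cH_\mu} = \int p(\{o\})\, d\mu(p) = \iota_\mu$, that projection has squared norm $\iota_\mu^2$ (the fixed subspace reducing to the constants).

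I expect the delicate point to be the second step: confirming that $\sigma(B) \subset [-(q+1), q+1]$ and that the parameterisation $\lambda \mapsto \alpha_\lambda$ really matches the spectral decomposition of $B$ with the family of positive-definite spherical functions $\{\omega_\lambda : \lambda \in \Lambda_q\}$ — i.e. the identification (available from \cite{Figa-TalamancaNebbiaBook}) of $\Lambda_q$ with the Hermitian spectrum of the Hecke algebra, and the fact that no spectral mass of $B$ escapes this set. Everything else is routine manipulation with the Koopman representation and the explicit spherical functions.
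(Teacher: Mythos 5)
The paper itself does not prove this theorem --- it is deferred to forthcoming work with Bj\"orklund --- so your proposal is a genuinely self-contained argument rather than a variant of an in-paper proof. Your core construction is the right one for a Gelfand pair whose Hecke algebra is singly generated: realise $\sigma_{\mu}$ as the push-forward, under the inverse of the homeomorphism $\Lambda_q \ni \lambda \mapsto \alpha_{\lambda} \in [-(q+1), q+1]$, of the scalar spectral measure of the self-adjoint operator $B = \pi_{\mu}(\chi_o)$ at the $K_q$-fixed vector $\xi = \bS\chi_{\{o\}}$. The verification for radial $f$ (via Lemma \ref{LemmaHeckeAlgebraGenerator}, multiplicativity of the spherical transform, and $\hat{\chi}_o(\lambda) = \alpha_{\lambda}$) and the uniqueness argument (polarisation plus Stone--Weierstrass for polynomials in $\alpha_{\lambda}$) are correct. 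Two steps, however, do not go through as written.

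First, the reduction of general $f \in \Borelbndinfty(\qplusonetree)$ to its $K_q$-average is not legitimate: replacing $f$ by $\mathrm{Av}_{K_q}f$ leaves $\hat{\varphi}_f$ unchanged but changes $\|\bS f\|_{L^2(\mu)}^2$. Indeed, with the paper's scalar spherical transform the stated identity is simply false for non-radial $f$: for the unit-intensity Poisson process and $f = \chi_{\{x\}} - \chi_{\{y\}}$ with $x \neq y$ equidistant from $o$, the left-hand side is $2$ while $\hat{\varphi}_f \equiv 0$. So either the theorem must be read as the sesquilinear identity on $\sH(G_q, K_q)$ (which you do prove, and which is all that is used downstream), or $|\hat{\varphi}_f(\lambda)|^2$ must be reinterpreted via the full (non-spherical) Fourier--Helgason transform; your proof cannot and should not claim the literal statement for arbitrary $f$.

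Second, the computation of the atom at $i/2$ has a gap at its last step. Your identification $\sigma_{\mu}(\{i/2\}) = \nu(\{q+1\}) = \|P_{\mathrm{fix}}\xi\|^2$ is correct once one justifies $\ker(B - (q+1)\,\Id) = \cH_{\mu}^{G_q}$ (equality in the norm estimate for the vector-valued integral of $\pi_{\mu}(g)\xi$ over $K_q s_o K_q$ forces $\pi_{\mu}(g)\xi = \xi$ there, and $K_q s_o K_q$ generates $G_q$); you should say this rather than assert that the atom ``is precisely the trivial-isotypic component.'' But the final claim that the fixed subspace reduces to the constants is exactly ergodicity of $\mu$, which is not among the hypotheses. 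Without it one only gets $\sigma_{\mu}(\{i/2\}) = \|\,\mathbb{E}_{\mu}[\,p(o)\mid \mathcal{I}\,]\,\|_2^2 \geq \iota_{\mu}^2$, and the inequality is strict for, e.g., an equal-weight mixture of two ergodic processes with distinct intensities $\iota_1 \neq \iota_2$, where the atom is $\tfrac{1}{2}(\iota_1^2 + \iota_2^2) > \iota_{\mu}^2$. So either an ergodicity hypothesis must be added (and invoked at this point of your argument), or the ``Moreover'' clause weakened to an inequality.
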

We refer to $\sigma_{\mu}$ as the \emph{diffraction measure} of $\mu$. We split $\sigma_{\mu}$ into three measures $\sigma_{\mu}^{(c)}, \sigma_{\mu}^{(p)}, \sigma_{\mu}^{(c_{\mathrm{sgn}})}$ by restricting to the respective intervals $i[0, 1/2), (0, \tau/2), \tau/2 + i[0, 1/2)$. In other words, for every $\varphi \in \sH(G_q, K_q)$,
\begin{align*}
\int_{\Lambda_q} \hat{\varphi}(\lambda) d\sigma_{\mu}(\lambda) = \,\, &\iota_{\mu}^2 |\hat{\varphi}(\tfrac{i}{2})|^2 + \int_0^{1/2} |\hat{\varphi}(i\lambda)|^2 d\sigma_{\mu}^{(c)}(\lambda)  +  \int_{0}^{\tau/2} |\hat{\varphi}(\lambda)|^2 d\sigma_{\mu}^{(p)}(\lambda) \\
&+ \int_0^{1/2} |\hat{\varphi}(\tfrac{\tau}{2} + i\lambda)|^2 d\sigma_{\mu}^{(c_{\mathrm{sgn}})}(\lambda) + \sigma_{\mu}(\{\tfrac{\tau + i}{2}\}) |\hat{\varphi}(\tfrac{\tau + i}{2})|^2 \, . 
\end{align*}
We refer to $\sigma_{\mu}^{(p)}, \sigma_{\mu}^{(c)}, \sigma_{\mu}^{(c_{\mathrm{sgn}})}$ as the \emph{principal-, complementary-} and \emph{signed complementary diffraction measure} respectively. We note that the spherical parameters $\lambda = 0, \tau/2$ correspond to principal series representations, but for our purposes it will be convenient to include them in the support of the complementary diffraction measures.

\subsection{Conditional number variances}
\label{Conditional number variances}

Let $\mu$ be an invariant locally square-integrable point process on $\qplusonetree$. The canonical variance of a linear statistic $\bS f$ is
\begin{align*}
\Varmu(\bS f) &= \int_{\Radonplus(\qplusonetree)} |\bS f(p)|^2 d\mu(p) - \iota_{\mu}^2 \Big|\sum_{x \in \qplusonetree} f(x) \Big|^2 \, .  
\end{align*}
Using the fact that
\begin{align*}
\sum_{x \in \qplusonetree} f(x) = \hat{\varphi}_f(\tfrac{i}{2})
\end{align*}
then Theorem \ref{ThmDiffractionMeasure} yields
\begin{align*}
\Varmu(\bS f) = \int_{\Lambda_q \backslash \{\frac{i}{2}\}} |\hat{\varphi}_f(\lambda)|^2 d\sigma_{\mu}(\lambda) \, .  
\end{align*}
In other words, the variance of a linear statistic is obtained as the second moment of the projected statistic in the orthogonal complement of the subspace of constant functions. This is however not the only way to obtain a notion of ''variance'', and in fact, for regular trees this is not always the preferred choice of variance.  
\begin{definition}[Spectrally conditioned variance]
Let $\mu$ be an invariant locally square-integrable point process and $Q \subset \Lambda_q$ be a Borel measurable subset. The \emph{$Q$-conditional variance} of a linear statistic $\bS f$, $f \in \Borelbndinfty(\qplusonetree)$, is 
\begin{align*}
\Var_{\mu}(\bS f \, | \, Q) = \int_{\Lambda_q \backslash Q} |\hat{\varphi}_f(\lambda)|^2 d\sigma_{\mu}(\lambda) \, . 
\end{align*}
\end{definition}
As mentioned in the Introduction, we will pay special attention to the cases $Q = \{\frac{i}{2}\}$ and $Q = \{\frac{i}{2}, \frac{\tau + i}{2}\}$, and we define number variances for these two cases.
\begin{definition}[Spectrally conditioned number variance]
Let $\mu$ and $Q$ be as in the previous definition. The \emph{$Q$-conditional number variance} of a finite subset $B \subset \qplusonetree$ is
\begin{align*}
\NV_{\mu}(B \,|\, Q) = \Var_{\mu}(\bS \chi_{B} \,|\, Q) \, . 
\end{align*}
Moreover, we define
\begin{enumerate}
    \item the \emph{number variance} $\NV_{\mu}(B) = \NV_{\mu}(B \,|\, \{\frac{i}{2}\})$,
    \item the \emph{sub-oscillatory number variance} $\NV_{\mu}^*(B) = \NV_{\mu}(B \,|\, \{\frac{i}{2}, \frac{\tau + i}{2}\})$.
\end{enumerate}
\end{definition}
The sub-oscillatory number variance of the metric ball $B_r$ can be written in terms of the diffraction measure $\sigma_{\mu}$ as
\small\begin{align*}
\NV_{\mu}^*(B_r) = \int_0^{1/2} |\hat{\chi}_{B_r}(i\lambda)|^2 d\sigma_{\mu}^{(c)}(\lambda)  +  \int_{0}^{\tau/2} |\hat{\chi}_{B_r}(\lambda)|^2 d\sigma_{\mu}^{(p)}(\lambda) + \int_0^{1/2} |\hat{\chi}_{B_r}(\tfrac{\tau}{2} + i\lambda)|^2 d\sigma_{\mu}^{(c_{\mathrm{sgn}})}(\lambda) .
\end{align*}\normalsize

\begin{example}[Invariant Poisson point process]
\label{ExampleInvariantPoissonPointProcess}
The unit intensity invariant Poisson point process on $\qplusonetree$ is the point process $\mu_{\Poi}$ defined implicitly by requiring that
\begin{enumerate}
    \item for every $k \in \Z_{\geq 0}$ and every finite subset $B \subset \qplusonetree$, the linear statistic $\bS\chi_B$ is Poisson distributed with intensity $|B|$ with respect to $\mu_{\Poi}$. 
    \item for every collection $B_1, \dots, B_m \subset \qplusonetree$ of disjoint finite subsets, the linear statistics $\bS\chi_{B_1}, \dots, \bS\chi_{B_m}$ are $\mu_{\Poi}$-independent.
\end{enumerate}
One shows that $\mu_{\Poi}$ is a simple point process and that it is unique up to equivalence, see \cite[Theorem 3.6 + Prop. 3.2]{Penrose2017LecturesOT}. If $f \in \Borelbndinfty(\qplusonetree)$, then the variance of the linear statistic $\bS f$ is 
\begin{align*}
\Var_{\mu_{\Poi}}(\bS f) &= \sum_{x, y \in \qplusonetree} f(x) \overline{f(y)} \Cov_{\mu_{\Poi}}(\bS\chi_{\{x\}}, \bS\chi_{\{y\}}) \\
&= \sum_{x \in \qplusonetree} |f(x)|^2 \Var_{\mu_{\Poi}}(\bS\chi_{\{x\}}) = \sum_{x \in \qplusonetree} |f(x)|^2 \, . 
\end{align*}
By the spherical Plancherel formula in Lemma \ref{LemmaSphericalPlancherelFormula} we see that the diffraction measure $\sigma_{\Poi} := \sigma_{\mu_{\Poi}}$ is given by the Plancherel measure,
\begin{align*}
\int_{\Lambda_q} |\varphi_f(\lambda)|^2 d\sigma_{\Poi}(\lambda) = \sum_{x \in \qplusonetree} |f(x)|^2 = \frac{q^{1/2}}{2\cosh_q(\frac{1}{2})} \frac{1}{\tau} \int_0^{\tau/2} |\varphi_f(\lambda)|^2 \frac{d\lambda}{|c_q(\lambda)|^2} \, . 
\end{align*}
In other words, $d\sigma_{\Poi}^{(c)}(\lambda) = d\sigma_{\Poi}^{(c_{\sgn})}(\lambda) = 0$ and $d\sigma_{\Poi}^{(p)}(\lambda) = \frac{q^{1/2}}{2\cosh_q(\frac{1}{2})} \frac{1}{\tau} |c_q(\lambda)|^{-2} d\lambda$.
\end{example}

\begin{example}[Random lattice orbits]
\label{ExampleRandomLatticeOrbits}
Let $\Gamma < G_q$ be a lattice. The \emph{random lattice orbit} $\mu_{\Gamma}$ is the invariant point process whose moments are given by
\begin{align*}
\int_{\Radonplus(\qplusonetree)} \bS f(p)^k d\mu_{\Gamma}(p) = \int_{\Gamma \backslash G_q} \Big( \sum_{\gamma \in \Gamma} f((\gamma g)^{-1}.o) \Big)^k dm_{\Gamma \backslash G_q}(\Gamma g) \, , 
\end{align*}
where $m_{\Gamma \backslash G_q}$ denotes the unique invariant probability measure on $\Gamma \backslash G_q$. In probabilistic terms, the invariant random lattice orbit is $g^{-1}\Gamma.o$, where $g$ is a uniformly distributed random automorphism in the fundamental domain of $\Gamma \backslash G_q$ in $G_q$. We compute the diffraction measure of $\mu_{\Gamma}$ when $\Gamma$ is the fundamental group of a finite simple connected $(q+1)$-regular graph in Section \ref{Hyperuniformity of fundamental groups of finite regular graphs}.
\end{example}

\section{Non-geometric hyperuniformity}
\label{Non-geometric hyperuniformity}

We will in this Section prove Theorem \ref{Theorem1.2} via the following analogue for regular trees of the inequality proved by Beck in \cite[p. 4]{BeckChenIrregularitiesOfDistribution}.

\begin{proposition}
\label{PropositionNonGeometricHyperuniformity}
Let $\mu$ be an invariant locally square-integrable point process on $\qplusonetree$ with diffraction measure $\sigma_{\mu}$. Then
\begin{enumerate}
    \item if $\sigma_{\mu}^{(c)} \neq 0$ or $\sigma_{\mu}^{(c_{\sgn})} \neq 0$,
    \begin{align*}
        \frac{\NVmu^*(r)}{q^r} \geq (1 + q^{-1/2})^2 \int_0^{1/2} \frac{\sinh_q^2(r\lambda)}{\sinh_q^2(\lambda)} d\sigma_{\mu}^{(c)}(\lambda) + (1 - q^{-1/2})^2 \int_0^{1/2} \frac{\sinh_q^2(r\lambda)}{\sinh_q^2(\lambda)} d\sigma_{\mu}^{(c_{\mathrm{sgn}})}(\lambda) \, .  
    \end{align*}
    \item if $\sigma_{\mu}^{(p)} \neq  0$, then for every $\varepsilon > 0$ there is an $R_o = R_o(q, \varepsilon) > 0$ such that for every $R \geq R_o$,
    \begin{align*}
        \frac{1}{R} \sum_{r = 0}^R \frac{\NVmu^*(r)}{q^r} \geq \frac{2(q + 1)^2}{q(q^{1/2} + 1)^2} \int_{\varepsilon}^{\tau/2 - \varepsilon} |c_q(\lambda)|^2 d\sigma_{\mu}^{(p)}(\lambda)\, . 
    \end{align*}
\end{enumerate}
\end{proposition}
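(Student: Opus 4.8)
The plan is to work directly from the spectral formula for the sub-oscillatory number variance,
\begin{align*}
\NV_\mu^*(r) = \int_0^{1/2} |\hat\chi_{B_r}(i\lambda)|^2 \, d\sigma_\mu^{(c)}(\lambda) + \int_0^{\tau/2} |\hat\chi_{B_r}(\lambda)|^2 \, d\sigma_\mu^{(p)}(\lambda) + \int_0^{1/2} |\hat\chi_{B_r}(\tfrac{\tau}{2}+i\lambda)|^2 \, d\sigma_\mu^{(c_{\mathrm{sgn}})}(\lambda),
\end{align*}
and to plug in the explicit spherical transform $\hat\chi_{B_r}$ from Lemma \ref{LemmaFTofIndicatorFunction} in each regime. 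For part (1), I would take the complementary term: for $\lambda \in (0, \tfrac{1}{2})$ we have $i\lambda$ purely imaginary, so $\sin_q(i\lambda \cdot) = i\sinh_q(\cdot)$ and the first-case formula gives
\begin{align*}
|\hat\chi_{B_r}(i\lambda)|^2 = \frac{(\sinh_q(\lambda(r+1)) + q^{-1/2}\sinh_q(\lambda r))^2}{\sinh_q^2(\lambda)} q^r.
\end{align*}
Since $\sinh_q$ is increasing and positive on $(0,\infty)$, I bound $\sinh_q(\lambda(r+1)) \geq \sinh_q(\lambda r)$, so the numerator is at least $(1 + q^{-1/2})^2 \sinh_q^2(\lambda r)$; dividing the whole inequality by $q^r$ and doing the same for the signed complementary term with the cruder bound $\sinh_q(\lambda(r+1)) + q^{-1/2}\sinh_q(\lambda r) \geq (1-q^{-1/2})\sinh_q(\lambda(r+1)) \geq (1-q^{-1/2})\sinh_q(\lambda r)$ yields the claimed inequality after discarding the nonnegative principal term.

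For part (2), I would instead average over $r$ and use Lemma \ref{LemmaAsymptoticMeanOfIndicatorFTSquared}. From the first-case formula, $|\hat\chi_{B_r}(\lambda)|^2/q^r = (\sin_q(\lambda(r+1)) + q^{-1/2}\sin_q(\lambda r))^2/\sin_q^2(\lambda)$ for $\lambda\in(0,\tfrac{\tau}{2})$, and Lemma \ref{LemmaAsymptoticMeanOfIndicatorFTSquared} gives that the Cesàro average of this quantity converges, uniformly on $[\varepsilon, \tfrac{\tau}{2}-\varepsilon]$, to $\frac{1 + 2q^{-1/2}\cos_q(\lambda) + q^{-1}}{2\sin_q^2(\lambda)}$. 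I would then restrict the principal integral to $[\varepsilon, \tfrac{\tau}{2}-\varepsilon]$ (legitimate since the integrand is nonnegative and the complementary terms are nonnegative), apply Fubini/Tonelli to swap the finite sum over $r$ with the integral against $d\sigma_\mu^{(p)}$, and invoke uniform convergence together with positivity of $\sigma_\mu^{(p)}$ (as a finite measure) to conclude that for $R$ large,
\begin{align*}
\frac{1}{R}\sum_{r=0}^R \frac{\NV_\mu^*(r)}{q^r} \geq \int_\varepsilon^{\tau/2-\varepsilon} \Big(\frac{1}{R}\sum_{r=0}^R \frac{|\hat\chi_{B_r}(\lambda)|^2}{q^r}\Big) d\sigma_\mu^{(p)}(\lambda)
\end{align*}
is eventually at least (a constant slightly less than) $\int_\varepsilon^{\tau/2-\varepsilon} \frac{1 + 2q^{-1/2}\cos_q(\lambda) + q^{-1}}{2\sin_q^2(\lambda)} d\sigma_\mu^{(p)}(\lambda)$. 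Finally I would rewrite the limiting density in terms of $|c_q(\lambda)|^2$: using $|c_q(\lambda)|^2 = \frac{\sinh_q^2(1/2) + \sin_q^2(\lambda)}{4\cosh_q^2(1/2)\sin_q^2(\lambda)}$ from Equation \eqref{EqcFunctionModulusSquared} together with the identity $1 + 2q^{-1/2}\cos_q(\lambda) + q^{-1} = q^{-1}(q^{1/2} + q^{-1/2})^2 \cdot \frac{\text{(something)}}{\dots}$ — more precisely matching $\frac{1 + 2q^{-1/2}\cos_q(\lambda)+q^{-1}}{2\sin_q^2(\lambda)}$ against $\frac{2(q+1)^2}{q(q^{1/2}+1)^2}|c_q(\lambda)|^2$ — gives the stated constant, after checking the elementary trigonometric identity $1 + 2q^{-1/2}\cos_q(\lambda) + q^{-1} = 2q^{-1/2}(\cosh_q(\tfrac12)\cdot 2 + \dots)$; this is a routine verification and I would present only the final equality.

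The main obstacle I anticipate is the bookkeeping in part (2): making the interchange of summation and integration rigorous (which needs only finiteness of $\sigma_\mu^{(p)}$ and boundedness of the integrand on the compact $\lambda$-range, so it is genuinely routine) and, more delicately, extracting a clean \emph{uniform-in-$R$} lower bound from the uniform convergence in Lemma \ref{LemmaAsymptoticMeanOfIndicatorFTSquared} — one must choose $R_o$ so that the Cesàro average exceeds, say, half the limit uniformly on $[\varepsilon,\tfrac{\tau}{2}-\varepsilon]$, and then absorb the factor of $2$ by slightly shrinking the constant, or alternatively observe that the inequality as stated already has the "correct" constant and the $\liminf$ suffices, so that $R_o$ can be taken to depend on the gap between the Cesàro average and its limit. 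The trigonometric identity converting the density to $|c_q(\lambda)|^2$ is the other place where a sign or factor-of-$q$ slip is easy, so I would double-check it against the $\lambda = \tau/4$ special value where $\cos_q(\tau/4) = 0$ and $|c_q|^2$ attains its (positive) minimum.
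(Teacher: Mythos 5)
Your proposal is correct in outline and follows essentially the same route as the paper: for (1) discard the principal term and bound the explicit complementary transforms from Lemma \ref{LemmaFTofIndicatorFunction}; for (2) restrict the principal integral to $[\varepsilon,\tfrac{\tau}{2}-\varepsilon]$, use the uniform Ces\`aro limit of Lemma \ref{LemmaAsymptoticMeanOfIndicatorFTSquared}, and convert the limiting density into a multiple of $|c_q(\lambda)|^2$. Two small points to repair. First, at $\tfrac{\tau}{2}+i\lambda$ the first-case formula produces alternating signs $(-1)^{r+1}$, $(-1)^r$, so in fact $|\hat{\chi}_{B_r}(\tfrac{\tau}{2}+i\lambda)|^2 q^{-r} = \bigl(\sinh_q((r+1)\lambda) - q^{-1/2}\sinh_q(r\lambda)\bigr)^2/\sinh_q^2(\lambda)$, with a relative \emph{minus} sign; this is exactly why the constant degrades to $(1-q^{-1/2})^2$ there, and your bound survives because $\sinh_q((r+1)\lambda)\geq\sinh_q(r\lambda)$ gives $\sinh_q((r+1)\lambda)-q^{-1/2}\sinh_q(r\lambda)\geq(1-q^{-1/2})\sinh_q(r\lambda)\geq 0$, but as written you applied the plus-sign formula to the signed term. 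Second, the last step of (2) is an inequality, not an identity: by Equation \ref{EqcFunctionModulusSquared} the limiting density equals $\tfrac{2\cosh_q^2(1/2)(1+2q^{-1/2}\cos_q(\lambda)+q^{-1})}{\sinh_q^2(1/2)+\sin_q^2(\lambda)}\,|c_q(\lambda)|^2$, a $\lambda$-dependent multiple of $|c_q(\lambda)|^2$ which one bounds below pointwise by $2\coth_q^2(\tfrac12)(1-q^{-1/2})^2|c_q(\lambda)|^2=\tfrac{2(q+1)^2}{q(q^{1/2}+1)^2}|c_q(\lambda)|^2$ (an elementary check, reducing to $(q^{1/2}+1)^2\geq 4q^{1/2}\sin_q^2(\lambda/2)$), so your sanity check at $\lambda=\tau/4$ would show the two sides are not equal. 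With these adjustments your argument coincides with the paper's proof, and your remark about choosing $R_o$ via the uniform convergence is exactly how the gap between the finite-$R$ average and its limit is handled.
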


\begin{proof}
For (1) we have the lower bound 
\begin{align*}
\frac{\NVmu^*(r)}{q^r} \geq \int_0^{1/2} \frac{|\hat{\chi}_{B_r}(i\lambda)|^2}{q^r} d\sigma_{\mu}^{(c)}(\lambda) + \int_0^{1/2} \frac{|\hat{\chi}_{B_r}(\tfrac{\tau}{2} + i\lambda)|^2}{q^r} d\sigma_{\mu}^{(c_{\mathrm{sgn}})}(\lambda) \, .
\end{align*}
We then bound $|\hat{\chi}_{B_r}|^2$ from below in each case using the explicit formulas in Lemma \ref{LemmaFTofIndicatorFunction} by
\begin{align*}
\frac{|\hat{\chi}_{B_r}(i\lambda)|^2}{q^r} = \frac{(\sinh_q((r + 1)\lambda) + q^{-1/2}\sinh_q(r\lambda))^2}{\sinh_q^2(\lambda)} \geq (1 + q^{-1/2})^2 \frac{\sinh_q^2(r\lambda)}{\sinh_q^2(\lambda)}
\end{align*}
and 
\begin{align*}
\frac{|\hat{\chi}_{B_r}(\tfrac{\tau}{2} + i\lambda)|^2}{q^r} &=  \frac{((-1)^{r + 1}\sinh_q((r + 1)\lambda) + q^{-1/2}(-1)^r\sinh_q(r\lambda))^2}{(-\sinh_q(\lambda))^2} \\
&=  \frac{(\sinh_q((r + 1)\lambda) - q^{-1/2}\sinh_q(r\lambda))^2}{\sinh_q^2(\lambda)} \geq (1 - q^{-1/2})^2 \frac{\sinh_q^2(r\lambda)}{\sinh_q^2(\lambda)}
\end{align*}
to get the desired result.

For (2) we have the lower bound 
\begin{align*}
        \frac{1}{R} \sum_{r = 0}^R \frac{\NVmu^*(r)}{q^r} \geq \int_{\varepsilon}^{\tau/2 - \varepsilon} \Big( \frac{1}{R} \sum_{r = 0}^R \frac{|\hat{\chi}_{B_r}(\lambda)|^2}{q^r} \Big) d\sigma_{\mu}^{(p)}(\lambda)\, . 
\end{align*}
By Lemma \ref{LemmaAsymptoticMeanOfIndicatorFTSquared} we get that
\begin{align*}
        \lim_{R \rightarrow +\infty} \int_{\varepsilon}^{\tau/2 - \varepsilon} \Big( \frac{1}{R} \sum_{r = 0}^R \frac{|\hat{\chi}_{B_r}(\lambda)|^2}{q^r} \Big) d\sigma_{\mu}^{(p)}(\lambda) = \int_{\varepsilon}^{\tau/2 - \varepsilon} \frac{1 + 2q^{-1/2}\cos_q(\lambda) + q^{-1}}{2\sin_q^2(\lambda)} d\sigma_{\mu}^{(p)}(\lambda) \, . 
\end{align*}
Lastly, using the formula for $|c_q(\lambda)|^2$ in Equation \ref{EqcFunctionModulusSquared},
\begin{align*}
\frac{1 + 2q^{-1/2}\cos_q(\lambda) + q^{-1}}{2\sin_q^2(\lambda)} &= \frac{2\cosh^2_q(\tfrac{1}{2})(1 + 2q^{-1/2}\cos_q(\lambda) + q^{-1})}{\sinh_q^2(\tfrac{1}{2}) + \sin_q^2(\lambda)} |c_q(\lambda)|^2 \\
&\geq 2\coth^2_q(\tfrac{1}{2})(1 - q^{-1/2})^2 |c_q(\lambda)|^2 = \frac{2(q + 1)^2}{q(q^{1/2} + 1)^2} |c_q(\lambda)|^2 \, . 
\end{align*}
Gathering everything together, we get 
\begin{align*}
        \lim_{R \rightarrow +\infty} \frac{1}{R} \sum_{r = 0}^R \frac{\NVmu^*(r)}{q^r} \geq  \frac{2(q + 1)^2}{q(q^{1/2} + 1)^2} \int_{\varepsilon}^{\tau/2 - \varepsilon} |c_q(\lambda)|^2 d\sigma_{\mu}^{(p)}(\lambda)\, . 
    \end{align*}
\end{proof}

\begin{proof}[Proof of Theorem \ref{Theorem1.2}]
To justify Equation \ref{EqNonGeometricHyperuniformity} in Theorem \ref{Theorem1.2}, we use item (2) in Proposition \ref{PropositionNonGeometricHyperuniformity} to find that 
\begin{align*}
\limsup_{r \rightarrow +\infty} \frac{\NVmu^*(r)}{|B_r|} \geq  \limsup_{r \rightarrow +\infty} \frac{q - 1}{(q + 1)R} \sum_{r = 0}^R \frac{\NVmu^*(r)}{q^r}  \geq \frac{2(q^2 - 1)}{q(q^{1/2} + 1)^2} \int_{\varepsilon}^{\tau/2 - \varepsilon} |c_q(\lambda)|^2 d\sigma_{\mu}^{(p)}(\lambda)
\end{align*}
for any $\varepsilon > 0$. Since $|c_q(\lambda)|^2 > 0$ for all $\lambda \in (0, \tau/2)$ with positive minimum achieved at $\lambda = \tau/4$, seen from Equation \ref{EqcFunctionModulusSquared}, we may take $\varepsilon > 0$ such that $\sigma_{\mu}^{(p)}((\varepsilon, \tau/2 - \varepsilon)) > 0$ to find that
\begin{align*}
\limsup_{r \rightarrow +\infty} \frac{\NVmu^*(r)}{|B_r|} &\geq \frac{2(q^2 - 1)}{q(q^{1/2} + 1)^2} \int_{\varepsilon}^{\tau/2 - \varepsilon} |c_q(\lambda)|^2 d\sigma_{\mu}^{(p)}(\lambda) \\
&\geq \frac{2(q^2 - 1)}{q(q^{1/2} + 1)^2} |c_q(\tfrac{\tau}{4})|^2 \sigma_{\mu}^{(p)}((\varepsilon, \tau/2 - \varepsilon)) > 0 , . 
\end{align*}
To prove item (1), we use item (1) of Proposition \ref{PropositionNonGeometricHyperuniformity} to see that
\begin{align*}
\frac{\NVmu^*(r)}{q^r} &\geq (1 + q^{-1/2})^2 \int_0^{1/2} \frac{\sinh_q^2(r\lambda)}{\sinh_q^2(\lambda)} d\sigma_{\mu}^{(c)}(\lambda) + (1 - q^{-1/2})^2 \int_0^{1/2} \frac{\sinh_q^2(r\lambda)}{\sinh_q^2(\lambda)} d\sigma_{\mu}^{(c_{\mathrm{sgn}})}(\lambda) \\
&\geq (1 + q^{-1/2})^2 r^2 \sigma_{\mu}^{(c)}(\{0\}) + (1 - q^{-1/2})^2 r^2 \sigma_{\mu}^{(c_{\sgn})}(\{\tfrac{\tau}{2}\}) \\
&\geq (1 - q^{-1/2})^2 r^2 \sigma_{\mu}(\{0, \tfrac{\tau}{2}\})  
\end{align*}
for every $r \geq 0$. From this we get
\begin{align*}
\liminf_{r \rightarrow +\infty} \frac{\NVmu^*(r)}{r^2|B_r|} \geq \frac{(q - 1)(1 - q^{-1/2})^2}{q + 1} \sigma_{\mu}(\{0, \tfrac{\tau}{2}\}) > 0 \, . 
\end{align*}
Similarly, we prove item (3) by observing that 
\begin{align*}
\liminf_{r \rightarrow +\infty} \frac{\NVmu^*(r)}{q^{(1 + \delta)r}} &\geq  (1 + q^{-1/2})^2 \liminf_{r \rightarrow +\infty} \int_{\delta/2}^{1/2} \frac{\sinh_q^2(r\lambda)}{q^{\delta r} \sinh_q^2(\lambda)} d\sigma_{\mu}^{(c)}(\lambda) \\
&\quad + (1 - q^{-1/2})^2 \liminf_{r\rightarrow +\infty} \int_{\delta/2}^{1/2} \frac{\sinh_q^2(r\lambda)}{q^{\delta r}\sinh_q^2(\lambda)} d\sigma_{\mu}^{(c_{\mathrm{sgn}})}(\lambda) \\
&\geq (1 + q^{-1/2})^2\liminf_{r \rightarrow +\infty} \frac{\sinh_q^2(r\frac{\delta}{2})}{q^{\delta r} \sinh_q^2(\frac{\delta}{2})}\sigma_{\mu}^{(c)}([\tfrac{\delta}{2}, \tfrac{1}{2}))  \\
&\quad + (1 - q^{-1/2})^2 \liminf_{r \rightarrow +\infty} \frac{\sinh_q^2(r\frac{\delta}{2})}{q^{\delta r} \sinh_q^2(\frac{\delta}{2})}\sigma_{\mu}^{(c_{\sgn})}([\tfrac{\delta}{2}, \tfrac{1}{2})) \\
&\geq \frac{(1 - q^{-1/2})^2}{4\sinh_q^2(\frac{\delta}{2})} \sigma_{\mu}\big(i[\tfrac{\delta}{2}, \tfrac{1}{2}) \cup (\tfrac{\tau}{2} + i[\tfrac{\delta}{2}, \tfrac{1}{2}))\big)\, .
\end{align*}
Thus 
\begin{align*}
\liminf_{r \rightarrow +\infty} \frac{\NVmu^*(r)}{|B_r|^{1 + \delta}} & \geq \frac{(q - 1)^{1 + \delta}(1 - q^{-1/2})^2}{4(q + 1)^{1 + \delta}\sinh_q^2(\frac{\delta}{2})}  \sigma_{\mu}\big(i[\tfrac{\delta}{2}, \tfrac{1}{2}) \cup (\tfrac{\tau}{2} + i[\tfrac{\delta}{2}, \tfrac{1}{2}))\big) > 0 \, . 
\end{align*}
\end{proof}

\section{Atoms in the diffraction picture}
\label{Atoms in the diffraction picture}

Let $\mu$ be an invariant locally square-integrable point process on $\qplusonetree$. If $\lambda \in \Lambda_q$ is an atom for the diffraction measure $\sigma_{\mu}$, meaning $\sigma_{\mu}(\{\lambda\}) > 0$, then the lower limit of the number variance by the volume of the ball can be bounded from below by
\begin{align*}
\liminf_{r \rightarrow +\infty} \frac{\NVmu^*(B_r)}{|B_r|} \geq \sigma_{\mu}(\{ \lambda \}) \liminf_{r \rightarrow +\infty} \frac{|\hat{\chi}_{B_r}(\lambda)|^2}{|B_r|} \, . 
\end{align*}
We know from Theorem \ref{Theorem1.2} that if $\lambda \in i[0, 1/2) \cup (\tau/2 + i[0, 1/2))$ then this lower limit is infinite, so we restrict our attention to principal series spherical parameters $\lambda \in (0, \tau/2)$. For this we now have by Lemma \ref{LemmaFTofIndicatorFunction} the explicit lower bound 
\begin{align*}
\liminf_{r \rightarrow +\infty} \frac{\NVmu(B_r)}{|B_r|} \geq \frac{(q - 1)\sigma_{\mu}(\{ \lambda \})}{(q + 1)\sin_q^2(\lambda)}\liminf_{r \rightarrow +\infty} \Big(\sin_q((r + 1)\lambda) + q^{-1/2} \sin_q(r\lambda) \Big)^2  \, . 
\end{align*}
We will now show that this lower limit is positive whenever the diffraction measure admits a "rational" atom in the spectrum. The following Proposition is proved in the Appendix, and we to state it we recall the notation $\tau = 2\pi/\log(q)$ and $\sin_q(\lambda) = \sin(\log(q)\lambda)$. 

\begin{proposition}
\label{PropositionTrigonometricEquationsRationalSolutions}
Let $a, b \in \Z$ be coprime such that $0 < a < b$. Then the equation
\begin{align*}
\sin_q(\tfrac{(r + 1)a \tau}{2b}) + q^{-1/2}\sin_q(\tfrac{r a \tau}{2b}) = 0
\end{align*}
has a solution $0 \leq r \leq b-1$ if and only if 
\begin{itemize}
    \item $q = 2$ and $\tfrac{a}{b} \in \{\tfrac{3}{4}, \tfrac{5}{12}, \tfrac{11}{12}\}$, or 
    \item $q = 3$ and $\tfrac{a}{b} = \tfrac{5}{6}$.
\end{itemize}
\end{proposition}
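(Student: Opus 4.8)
The plan is to translate the equation into a $\mathbb{Q}$-linear relation among ordinary trigonometric numbers and then invoke the classification of such relations. Since $\sin_q(x)=\sin(\log(q)\,x)$ and $\tau=2\pi/\log(q)$, we have $\sin_q\!\big(\tfrac{m a\tau}{2b}\big)=\sin\!\big(\tfrac{m a\pi}{b}\big)$, so with $\theta=\tfrac{a\pi}{b}$ the equation reads $f(r):=\sin((r+1)\theta)+q^{-1/2}\sin(r\theta)=0$. Because $f(r+b)=(-1)^{a}f(r)$, the integer solution set is $b$-periodic, which explains why the range $0\le r\le b-1$ is the natural one and why it suffices to decide whether a solution exists at all. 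A solution with $\sin(r\theta)=0$ would force $b\mid r$, hence $r\equiv 0$ and $f(r)=\sin\theta\ne 0$ (as $0<a<b$); so we may assume $\sin(r\theta)\ne 0$ throughout.

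Multiplying by $q^{1/2}$, squaring, and using $2\sin^{2}x=1-\cos 2x$ turns the equation into
\[
q\cos((r+1)\psi)-\cos(r\psi)=q-1,\qquad \psi:=2\theta=\tfrac{2a\pi}{b},
\]
i.e. a non-trivial $\mathbb{Q}$-linear dependence of the trigonometric numbers $1$, $\cos((r+1)\psi)$ and $\cos(r\psi)$, with coefficients $-(q-1)$ on $1$, $q$ on $\cos((r+1)\psi)$ and $-1$ on $\cos(r\psi)$. One first checks that $(r+1)\psi$ and $r\psi$ are distinct and non-opposite modulo $2\pi$, since otherwise the relation would force $\cos(r\psi)=1$, hence $\psi\equiv 0\pmod{2\pi}$, which is impossible as $0<a<b$. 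Then I would apply Berger's Theorem~1.2 on rational linear independence of pairs of trigonometric numbers, together with Niven's theorem: either $\cos((r+1)\psi)$ and $\cos(r\psi)$ are both rational (by the relation, one being rational forces the other), or $\big((r+1)\psi,r\psi\big)$ lies on Berger's finite list of exceptional pairs. In the latter case the (essentially unique) $\mathbb{Q}$-relation satisfied by such a pair cannot have coefficient pattern $(q,-1,-(q-1))$ for an integer $q\ge 2$, since the two cosine coefficients $q$ and $-1$ have different absolute values; so this case contributes nothing. In the former case Niven's theorem gives $\cos((r+1)\psi),\cos(r\psi)\in\{0,\pm\tfrac12,\pm1\}$, and solving $q c_{1}-c_{2}=q-1$ over this finite set with $q\ge 2$ an integer leaves only
\[
(c_{1},c_{2},q)\in\big\{(\tfrac12,0,2),\,(0,-1,2),\,(\tfrac12,-\tfrac12,3),\,(\tfrac12,-1,4),\,(1,1,\text{any }q)\big\}.
\]

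For each surviving triple the conditions $\cos((r+1)\psi)=c_{1}$, $\cos(r\psi)=c_{2}$ with $\psi=2a\pi/b$ become divisibility constraints that bound $b$ (for instance $b\mid 12$ in the first case and $b\mid 6$ in the third) and then pin down the admissible residues $a\bmod b$ and shifts $r$; finally one imposes the genuine unsquared sign condition $q^{1/2}\sin((r+1)\theta)=-\sin(r\theta)$ to discard the extraneous roots produced by squaring. Carrying this out, $(1,1,\text{any }q)$ and $(\tfrac12,-1,4)$ give nothing, $(\tfrac12,-\tfrac12,3)$ gives exactly $q=3$ with $a/b=5/6$ (e.g. $r=4$), $(\tfrac12,0,2)$ gives $q=2$ with $a/b\in\{5/12,11/12\}$, and $(0,-1,2)$ gives $q=2$ with $a/b=3/4$ (e.g. $r=9$, $r=9$, $r=2$ respectively). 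This is precisely the asserted dichotomy. The main obstacle is this final enumeration: applying Berger's classification with the correct normalization of angles, overlooking no exceptional pair and no Niven value, and carrying out the post-squaring sign filtering for each candidate --- the detailed analysis of trigonometric numbers the statement alludes to.
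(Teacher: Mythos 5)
Your overall strategy is the same as the paper's: square the equation to obtain the rational relation $q\cos((r+1)\psi)-\cos(r\psi)=q-1$ with $\psi=2a\pi/b$, then combine Berger's theorem with Niven's theorem to reduce to a finite check, and finally filter by the unsquared sign condition. The only substantive difference is bookkeeping: you enumerate the admissible pairs of Niven values $(c_1,c_2)$ and solve for $q$, whereas the paper bounds $b$ (and, in the degenerate case, parametrizes $r$) and then checks residues by hand. Your list of triples $(c_1,c_2,q)$ is correct, and the need to discard extraneous roots from squaring is correctly identified.

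There is, however, a genuine gap where you justify invoking Berger's theorem. Its hypothesis is $\tfrac{a_1}{b_1}\pm\tfrac{a_2}{b_2}\notin\Z$, i.e.\ the sum and difference of the two angles must avoid \emph{all} integer multiples of $\pi$; you only exclude multiples of $2\pi$ (``distinct and non-opposite modulo $2\pi$''). The case $(r+1)\psi+r\psi\equiv\pi\pmod{2\pi}$ is not ruled out by your argument, and it is not vacuous: for the listed solution $q=3$, $a/b=5/6$, $r=4$ one has $(2r+1)\psi=15\pi$, so Berger's hypothesis fails precisely at one of the solutions you are classifying. Your claim that in the degenerate cases ``the relation would force $\cos(r\psi)=1$'' is false there: when the sum is an odd multiple of $\pi$ one has $\cos((r+1)\psi)=-\cos(r\psi)$ and the relation gives $\cos(r\psi)=-(q-1)/(q+1)$, not $1$. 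The repair is short --- this value is rational, so Niven forces $-(q-1)/(q+1)\in\{0,\pm\tfrac12,\pm1\}$, hence $q=3$ and $\cos(r\psi)=-\tfrac12$, which lands back in your ``both rational'' branch --- but as written the case analysis is not exhaustive, and it survives only because the missed case happens to funnel into a branch you enumerate anyway. A smaller point: the dismissal of the $(5,5)$ exceptional pair should be argued via the $\sqrt5$-parts of $\cos(a\pi/5)=(\pm1\pm\sqrt5)/4$ (matching irrational parts in $qc_1-c_2=q-1$ forces $q=\pm1$), rather than by appeal to an ``essentially unique'' relation with coefficients of equal absolute value.
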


\begin{proof}[Proof of Theorem \ref{Theorem1.3}]
Suppose that the diffraction measure $\sigma_{\mu}$ has an atom of the form $\lambda = \tfrac{a\tau}{2b}$ for some coprime $a, b \in \Z$ such that $0 < a < b$. Denote by $F_{a, b} : \Z \rightarrow \R_{\geq 0}$ the function
\begin{align*}
F_{a, b}(r) = \Big(\sin_q((r + 1)\tfrac{a\tau}{2b}) + q^{-1/2} \sin_q(r\tfrac{a\tau}{2b}) \Big)^2 \, , 
\end{align*}
so that 
\begin{align*}
\liminf_{r \rightarrow +\infty} \frac{\NVmu^*(B_r)}{|B_r|} \geq \frac{(q - 1)\sigma_{\mu}(\{ \tfrac{a\tau}{2b} \})}{(q + 1)\sin_q^2(\tfrac{a\tau}{2b})}\liminf_{r \rightarrow +\infty} F_{a, b}(r) \, . 
\end{align*} 
Note that $F_{a, b}$ is not only a $2b$-periodic function, but also $b$-periodic from the square in its definition. By $b$-periodicity of $F_{a, b}$, the lower limit on the right hand side vanishes if and only if there is an $0 \leq r^* \leq b - 1 $ such that $F_{a, b}(r^*) = 0$. By Proposition \ref{PropositionTrigonometricEquationsRationalSolutions}, such an $r^*$ exists if and only if 
$$(q, \tfrac{a}{b}) = (2, \tfrac{3}{4}), (2, \tfrac{5}{12}), (2, \tfrac{11}{12}) \mbox{ or } (3, \tfrac{5}{6}) \, . $$
Thus, if $(q, \tfrac{a}{b})$ fall outside of the mentioned cases then we have that
\begin{align*}
\liminf_{r \rightarrow +\infty} F_{a, b}(r) = \min_{0 \leq r \leq b} \Big(\sin_q((r + 1)\tfrac{a\tau}{2b}) + q^{-1/2} \sin_q(r\tfrac{a\tau}{2b}) \Big)^2 > 0 
\end{align*}
and so we're done.
\end{proof}

\section{Spectral hyperuniformity and stealth}
\label{Spectral hyperuniformity and stealth}

We recall the definition of spectral hyperuniformity and stealth from the Introduction and compute the average asymptotic of the sub-oscillatory number variance for stealth point processes. Recall from Example \ref{ExampleInvariantPoissonPointProcess} that the diffraction measure of the unit intensity invariant Poisson point process is 
\begin{align*}
d\sigma_{\Poi}(\lambda) = \delta_{i/2} + \chi_{(0, \tau/2)}(\lambda)\frac{q^{1/2}}{2\cosh_q(\frac{1}{2})} \frac{1}{\tau} |c_q(\lambda)|^{-2} d\lambda \, , 
\end{align*}
where the Lebesgue absolutely continuous part of the measure is the Plancherel measure for $G_q$. We take the stance that spectral hyperuniformity of an invariant locally square-integrable point process $\mu$ should refer to sub-Poissonian decay of the diffraction measure $\sigma_{\mu}$ near the Harish-Chandra $\Xi$-functions corresponding to $0, \tau/2 \in \Lambda_q$. To state the definition, we observe from Equation \ref{EqcFunctionModulusSquared} that 
\begin{align*}
   |c_q(\lambda)|^{-2} =  \frac{4\cosh^2_q(\frac{1}{2})\sin_q^2(\lambda)}{\sinh_q^2(\tfrac{1}{2}) + \sin_q^2(\lambda)} \asymp_q \begin{cases} \lambda^{2} &\mbox{ if } \lambda \approx 0 \\ (\tfrac{\tau}{2} - \lambda)^2  &\mbox{ if } \lambda \approx \tfrac{\tau}{2} \end{cases}
\end{align*}
for $\lambda$ close to $0$ and $\tau/2$ respectively, so that
\begin{align*}
\sigma_{\Poi}\Big((0, \varepsilon] \cup [ \tfrac{\tau}{2} - \varepsilon, \tfrac{\tau}{2})\Big) &= \frac{q^{1/2}}{2\cosh_q(\frac{1}{2})} \frac{1}{\tau} \Big( \int_0^{\varepsilon} |c_q(\lambda)|^{-2} d\lambda + \int_{\tau/2 - \varepsilon}^{\tau/2} |c_q(\lambda)|^{-2} d\lambda \Big) \\
&\asymp_{q} \int_0^{\varepsilon} \lambda^2 d\lambda + \int_{\tau/2 - \varepsilon}^{\tau/2} (\tfrac{\tau}{2} - \lambda)^2 d\lambda \asymp \varepsilon^3
\end{align*}
as $\varepsilon \rightarrow 0^+$. The defintion is the following.
\begin{definition}
Let $\mu$ be an invariant locally square-integrable point process on $\qplusonetree$ with diffraction measure $\sigma_{\mu}$. Then $\mu$ is
\begin{enumerate}
    \item \emph{spectrally hyperuniform} if $\sigma_{\mu}^{(c)} = \sigma_{\mu}^{(c_{\mathrm{osc}})} = 0$ and 
    \begin{align*}
    \limsup_{\varepsilon \rightarrow 0^+} \frac{\sigma_{\mu}^{(p)}((0, \varepsilon])}{\varepsilon^3} = \limsup_{\varepsilon \rightarrow 0^+} \frac{\sigma_{\mu}^{(p)}([\tau/2 - \varepsilon, \tau/2))}{\varepsilon^3} = 0 \, . 
    \end{align*}
    \item \emph{stealthy} if $\sigma_{\mu}^{(c)} = \sigma_{\mu}^{(c_{\mathrm{osc}})} = 0$ and there is an $\varepsilon_o > 0$ such that 
    \begin{align*}
    \sigma_{\mu}^{(p)}((0, \varepsilon_o]) = \sigma_{\mu}^{(p)}([\tau/2 - \varepsilon_o, \tau/2)) = 0 \, . 
    \end{align*}
\end{enumerate}
\end{definition}
In particular, a stealthy point process is spectrally hyperuniform. For stealthy point processes $\mu$ we can explicitly compute the average asymptotic of the number variance in terms of the diffraction measure $\sigma_{\mu}$.

\begin{lemma}
Let $\mu$ be a stealthy point process on $\qplusonetree$. Then
\begin{align*}
        \lim_{R \rightarrow +\infty} \frac{1}{R} \sum_{r = 0}^R \frac{\NVmu^*(r)}{q^r} = \frac{1}{2} \int_{0}^{\tau/2} \frac{1 + 2q^{-1/2} \cos_q(\lambda) + q^{-1}}{\sin_q^2(\lambda)} d\sigma_{\mu}^{(p)}(\lambda) \, . 
\end{align*}
\end{lemma}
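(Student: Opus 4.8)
The plan is to exploit stealth to confine the diffraction measure to a compact subinterval of $(0,\tau/2)$ on which the asymptotic average of Lemma \ref{LemmaAsymptoticMeanOfIndicatorFTSquared} holds \emph{uniformly}, and then interchange the average over $r$ with the integral against $\sigma_\mu^{(p)}$.

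First, since $\mu$ is stealthy the complementary and signed complementary diffraction measures vanish, so the formula for the sub-oscillatory number variance collapses to
\begin{align*}
\NVmu^*(r) = \int_0^{\tau/2} |\hat\chi_{B_r}(\lambda)|^2 \, d\sigma_\mu^{(p)}(\lambda) \, .
\end{align*}
Moreover there is $\varepsilon_o > 0$ with $\sigma_\mu^{(p)}\big((0,\varepsilon_o] \cup [\tfrac{\tau}{2} - \varepsilon_o, \tfrac{\tau}{2})\big) = 0$; after possibly shrinking $\varepsilon_o$ the set $I := [\varepsilon_o, \tfrac{\tau}{2} - \varepsilon_o]$ is a compact subinterval of $(0,\tfrac{\tau}{2})$ containing the support of $\sigma_\mu^{(p)}$, so dividing by $q^r$ gives $\tfrac{\NVmu^*(r)}{q^r} = \int_I q^{-r}|\hat\chi_{B_r}(\lambda)|^2 \, d\sigma_\mu^{(p)}(\lambda)$.

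Next I would average over $0 \leq r \leq R$. As this is a finite sum of $\sigma_\mu^{(p)}$-integrable functions, linearity of the integral yields
\begin{align*}
\frac{1}{R} \sum_{r=0}^R \frac{\NVmu^*(r)}{q^r} = \int_I \Big( \frac{1}{R} \sum_{r=0}^R \frac{|\hat\chi_{B_r}(\lambda)|^2}{q^r} \Big) d\sigma_\mu^{(p)}(\lambda) \, .
\end{align*}
By Lemma \ref{LemmaAsymptoticMeanOfIndicatorFTSquared} with $\varepsilon = \varepsilon_o$, the inner average converges as $R \to +\infty$ to $\tfrac{1 + 2q^{-1/2}\cos_q(\lambda) + q^{-1}}{2\sin_q^2(\lambda)}$ uniformly for $\lambda \in I$. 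Since $\sigma_\mu^{(p)}$ is a finite measure supported on $I$, the difference between $\tfrac1R \sum_{r=0}^R \tfrac{\NVmu^*(r)}{q^r}$ and $\int_I \tfrac{1 + 2q^{-1/2}\cos_q(\lambda) + q^{-1}}{2\sin_q^2(\lambda)} \, d\sigma_\mu^{(p)}(\lambda)$ is bounded by $\sigma_\mu^{(p)}(I)$ times the supremum over $I$ of the difference of the integrands, hence tends to $0$ as $R \to +\infty$. Finally, since $\sigma_\mu^{(p)}$ vanishes outside $I$ this last integral equals $\tfrac12 \int_0^{\tau/2} \tfrac{1 + 2q^{-1/2}\cos_q(\lambda) + q^{-1}}{\sin_q^2(\lambda)} \, d\sigma_\mu^{(p)}(\lambda)$, which is the assertion.

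The only step requiring genuine care is the interchange of limit and integral, and this is precisely where stealth is indispensable: in general $q^{-r}|\hat\chi_{B_r}(\lambda)|^2$ is unbounded in $r$ as $\lambda \to 0^+$ or $\lambda \to \tfrac{\tau}{2}^-$, and any complementary-series mass would force the left-hand side to grow super-linearly (cf.\ Theorem \ref{Theorem1.2}), so no uniform control is available near the endpoints. Stealth removes a neighbourhood of both endpoints from the support of $\sigma_\mu^{(p)}$, which both makes the left-hand side converge and supplies, via Lemma \ref{LemmaAsymptoticMeanOfIndicatorFTSquared}, the uniform convergence on the compact support of $\sigma_\mu^{(p)}$ that, together with finiteness of the measure, closes the argument.
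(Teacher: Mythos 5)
Your proposal is correct and follows essentially the same route as the paper: use stealth to reduce the sub-oscillatory number variance to an integral of $q^{-r}|\hat\chi_{B_r}(\lambda)|^2$ against $\sigma_\mu^{(p)}$ supported in $[\varepsilon_o,\tfrac{\tau}{2}-\varepsilon_o]$, then invoke the uniform convergence in Lemma \ref{LemmaAsymptoticMeanOfIndicatorFTSquared} to pass the Ces\`aro limit inside the integral. The only difference is that you spell out the interchange-of-limit estimate (sup-norm times the finite total mass) which the paper leaves implicit.
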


\begin{proof}
Since $\mu$ is stealthy, there is an $\varepsilon_o > 0$ such that $\sigma_{\mu}^{(p)}((0, \varepsilon_o)) = \sigma_{\mu}^{(p)}((\tau/2 - \varepsilon_o, \tau/2)) = 0$. Thus 
\begin{align*}
       \frac{1}{R} \sum_{r = 0}^R \frac{\NVmu^*(r)}{q^r} = \int_{\varepsilon_o}^{\tau/2 - \varepsilon_o}  \Big( \frac{1}{R} \sum_{r = 0}^R \frac{|\hat{\chi}_{B_r}(\lambda)|^2}{q^r} \Big) d\sigma_{\mu}^{(p)}(\lambda) \, . 
\end{align*}
By Lemma \ref{LemmaAsymptoticMeanOfIndicatorFTSquared}, the limit of the above as as $R \rightarrow +\infty$ becomes
\begin{align*}
        \lim_{R \rightarrow +\infty} \int_{\varepsilon_o}^{\tau/2 - \varepsilon_o}  \Big( \frac{1}{R} \sum_{r = 0}^R \frac{|\hat{\chi}_{B_r}(\lambda)|^2}{q^r} \Big) d\sigma_{\mu}^{(p)}(\lambda) &= \int_{\varepsilon_o}^{\tau/2 - \varepsilon_o} \frac{1 + 2q^{-1/2} \cos_q(\lambda) + q^{-1}}{2 \sin_q^2(\lambda)} d\sigma_{\mu}^{(p)}(\lambda) \\
        &= \frac{1}{2} \int_{0}^{\tau/2} \frac{1 + 2q^{-1/2} \cos_q(\lambda) + q^{-1}}{\sin_q^2(\lambda)} d\sigma_{\mu}^{(p)}(\lambda) \, . 
\end{align*}
\end{proof}

\section{Hyperuniformity of fundamental groups of finite regular graphs}
\label{Hyperuniformity of fundamental groups of finite regular graphs}

We construct the universal covering tree of a finite simple connected regular graph in Subsection \ref{The universal covering tree of a simple connected regular graph} following \cite{ChihScullFundamentalGroupoidsOfGraphs}, along with the fundamental group of such a graph in Subsection \ref{The fundamental group}, realizing such graphs as double coset graphs for $G_q$. The diffraction measure of the associated random lattice orbits are computed in Subsectin \ref{Diffraction measures of fundamental groups} and finally we prove Theorem \ref{Theorem1.4} for the complete, regular bipartite and Petersen graph in Subsection \ref{Examples}. 

\subsection{The universal covering tree of a simple connected regular graph}
\label{The universal covering tree of a simple connected regular graph}

Let $\frak{X} = (V_{\frak{X}}, E_{\frak{X}})$ be a simple connected $(q + 1)$-regular graph and fix a vertex $u \in V_{\frak{X}}$.
Consider the set of $\bP_{\frak{X}}$ of paths of finite length in $\frak{X}$, in other words sequences $\xi = (x_0, x_1, \dots, x_n)$ such that $x_i \in V_{\frak{X}}$ and $(x_i, x_{i+1}) \in E_{\frak{X}}$ for all $i$. If $\xi, \xi' \in \bP_{\frak{X}}$ share the same final and initial vertex respectively, then we can consider the concatenated path $\xi.\xi' \in \bP_{\frak{X}}$, as well as reversed paths $\xi^{-1} \in \bP_{\frak{X}}$. If $\xi \in \bP_{\frak{X}}$ has \emph{backtracking}, meaning that there is an $i$ such that $x_i = x_{i+2}$, then a \emph{prune} of $\xi$ is a path
\begin{align*}
\xi' = (x_0, x_1, \dots , x_{i - 1}, x_i, x_{i + 3}, \dots , x_n ) \in \bP_{\frak{X}} \, . 
\end{align*}
Two paths $\xi, \xi' \in \bP_{\frak{X}}$ are \emph{prune-equivalent}, $\xi \sim_{p.e.} \xi'$, if there exists a sequence of paths $\xi = \xi_1, \xi_2, \dots, \xi_{m-1}, \xi_m = \xi'$ such that either $\xi_i$ is a prune of $\xi_{i+1}$ or $\xi_{i+1}$ is a prune of $\xi_i$ for all $i$. Note that the initial and final vertex of a path are preserved under prune-equivalence. A path $\xi$ without backtracking is called \emph{non-prunable}. One shows that prune-equivalence defines an equivalence relation on $\bP_{\frak{X}}$, and the following minimality property can be proved by induction on the number of backtrackings in a path, see \cite[Prop. 3.3]{ChihScullFundamentalGroupoidsOfGraphs}.
\begin{lemma}
\label{LemmaNonPrunableRepresentative}
Each equivalence class $[\xi]_{p.e.}$ of $\sim_{p.e.}$ has a unique non-prunable representative $\xi^{\min}$.
\end{lemma}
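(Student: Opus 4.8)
The plan is to treat this as the graph-theoretic analogue of the uniqueness of the reduced word representing an element of a free group, and to deduce it from two facts about the one-step prune relation: that it terminates, and that it is locally confluent, so that Newman's lemma applies.

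\emph{Existence of a non-prunable representative.} A single prune deletes two vertices, hence strictly decreases the length of a path. Therefore, starting from any $\xi \in \bP_{\frak{X}}$ and pruning repeatedly, the procedure halts after finitely many steps at a path with no backtracking. That terminal path is non-prunable and, being reached from $\xi$ by prunes, lies in $[\xi]_{p.e.}$, so every class contains at least one non-prunable representative.

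\emph{Uniqueness.} Write $\xi \to \eta$ when $\eta$ is obtained from $\xi$ by one prune, and let $\to^{*}$ denote its reflexive-transitive closure. The main point is local confluence: if $\xi \to \eta_{1}$ and $\xi \to \eta_{2}$, then there is a path $\zeta$ with $\eta_{1} \to^{*} \zeta$ and $\eta_{2} \to^{*} \zeta$. To prove this, write $\xi = (x_{0}, \dots, x_{n})$, let $i$ and $j$ be the backtracking sites being pruned, so $x_{i} = x_{i+2}$ and $x_{j} = x_{j+2}$, and assume $i < j$ (the case $i = j$ giving $\eta_{1} = \eta_{2}$). If $j \geq i+3$, the two prunes remove disjoint blocks of vertices, the backtracking at $j$ is still prunable after the prune at $i$ (at a shifted index) and conversely, so performing both prunes in either order reaches a common $\zeta$ in one further step on each side. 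If $j = i+1$, then $x_{i} = x_{i+2}$ and $x_{i+1} = x_{i+3}$, and one checks directly that $\eta_{1} = \eta_{2}$. If $j = i+2$, then $x_{i} = x_{i+2} = x_{i+4}$; here pruning at site $i$ creates a new backtracking at the splice point which can be pruned once more, and similarly for $\eta_{2}$, both descending to $(x_{0}, \dots, x_{i}, x_{i+5}, \dots, x_{n})$. In each case the diamond closes. Since $\to$ is terminating (by the length argument) and locally confluent, Newman's lemma yields confluence: each $\xi$ has a unique $\to$-normal form $\xi^{\min}$, and the $\to$-normal forms are exactly the non-prunable paths. As $\sim_{p.e.}$ is by definition the equivalence relation generated by $\to$, confluence forces $\xi^{\min}$ to depend only on the class $[\xi]_{p.e.}$, and if $\xi$ is already non-prunable then $\xi = \xi^{\min}$; hence $[\xi]_{p.e.}$ contains exactly one non-prunable path.

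\emph{Expected main obstacle.} Everything except local confluence is routine: the length function gives termination, and Newman's lemma upgrades local confluence to the uniqueness of normal forms. The care lies in the overlapping cases $j \in \{i+1, i+2\}$ of the diamond, where one must follow indices through two interacting prunes — in particular the new backtracking produced at the junction of a prune — and exhibit the common descendant explicitly. One may alternatively avoid invoking Newman's lemma and argue directly by strong induction on the length of $\xi$: define $\xi^{\min}$ by always pruning the leftmost backtracking, and show, using the same case analysis, that any one-step prune of $\xi$ has the same such reduction as $\xi$, whence the reduction is constant on each $\sim_{p.e.}$-class.
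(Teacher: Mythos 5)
Your proof is correct, but it takes a different route from the one the paper relies on: the paper does not prove the lemma itself, instead citing Chih--Scull (Prop.\ 3.3), where the statement is established by induction on the number of backtrackings in a path; you instead set it up as an abstract rewriting argument, proving termination (length drops by $2$ per prune) and local confluence of the one-step prune relation and then invoking Newman's lemma, exactly as for reduced words in free groups. Your case analysis of the critical overlaps is the part that needed care and it checks out: for $j \geq i+3$ the deleted blocks $\{i+1,i+2\}$ and $\{j+1,j+2\}$ are disjoint and the prunes commute after reindexing; for $j = i+1$ the two one-step results coincide because $x_{i+1} = x_{i+3}$ makes the spliced sequences literally equal; for $j = i+2$ one has $x_i = x_{i+2} = x_{i+4}$ and each side admits one further prune down to $(x_0,\dots,x_i,x_{i+5},\dots,x_n)$ (correct also in the boundary case $n = i+4$, where the common descendant is $(x_0,\dots,x_i)$). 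The only step you leave implicit is the standard passage from confluence to the Church--Rosser property — two $\sim_{p.e.}$-equivalent paths have a common reduct, so two equivalent normal forms are equal — which is routine. Comparing the two approaches: yours is self-contained and isolates precisely what must be verified (the three overlap configurations), and it generalizes verbatim to other reduction systems; the paper's choice to cite an induction on backtrackings keeps the exposition short and avoids importing rewriting-theory language, at the cost of not displaying the diamond argument. Your suggested alternative (leftmost-reduction plus strong induction on length) is essentially the cited induction in disguise, so either formulation would serve as a drop-in proof.
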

Note that if $\xi_1, \xi_2 \in \bP_{\frak{X}}$ share the final and initial vertex respectively, then prune-equivalence commutes with concatenation in the sense that
\begin{align*}
\xi_1.\xi_2 \sim_{p.e.} \xi_1^{\min}.\xi_2 \sim_{p.e.} \xi_1.\xi_2^{\min} \sim_{p.e.} \xi_1^{\min}.\xi_2^{\min} \sim_{p.e.} (\xi_1.\xi_2)^{\min} \, . 
\end{align*}
The concatenation of two non-prunable paths are not necessarily non-prunable, so the last equivalence in the chain above might not be an equality of paths. 

The \emph{rooted pruned tree} $\bT_{\frak{X}}$ of the pointed graph $(\frak{X}, u)$ is the graph whose vertex set are the prune-equivalence classes of paths in $\bP_{\frak{X}}$ that start with $u$,
\begin{align*}
V_{\bT_{\frak{X}}} = \Big\{ [\xi]_{p.e.} \in \bP_{\frak{X}}/\sim_{p.e.} \, \Big| \, \xi = (u, x_1, ..., x_n) \in \bP_{\frak{X}} \Big\}
\end{align*}
and with edge set given by pairs of prune-equivalence classes such that one is the extension of the other by one neighbouring vertex in $\frak{X}$,
\begin{align*}
E_{\bT_{\frak{X}}} = \Big\{ ([\xi_1]_{p.e.}, [\xi_2]_{p.e.}) \in V_{\bT_{\frak{X}}} \times V_{\bT_{\frak{X}}} \, \Big| \, \exists \, x \in V_{\frak{X}} :  \xi_1^{\min} = (\xi_2^{\min}, x) \mbox{ or } \xi_2^{\min} = (\xi_1^{\min}, x) \Big\} \, . 
\end{align*}
Moreover, we fix the root vertex $\xi_u = [(u)]_{p.e.} \in V_{\bT_{\frak{X}}}$. Then $\bT_{\frak{X}}$ is simple, connected and $(q + 1)$-regular since $\frak{X}$ is and $\bT_{\frak{X}}$ does not contain any cycles, so we conclude that there is a graph isomorphism $\bT_{\frak{X}} \rightarrow \qplusonetree$ sending $\xi_u$ to $o$.

The rooted pruned tree $\bT_{\frak{X}}$ is the metric universal covering space of $\frak{X}$ whose covering map is the graph homomorphism $\pr_{\frak{X}} : \bT_{\frak{X}} \rightarrow \frak{X}$ given by sending $[\xi]_{p.e.} \in V_{\bT_{\frak{X}}}$ with non-prunable representative $\xi^{\min} = (u, x_1, \dots, x_n)$ to the vertex $x_n \in V_{\frak{X}}$. In particular, it sends the root vertex $\xi_u$ to the root vertex $u$.

\subsection{The fundamental group}
\label{The fundamental group}

Consider a simple connected $(q + 1)$-regular pointed graph $(\frak{X}, u)$ with universal covering tree $\bT_{\frak{X}}$ as in the previous subsection. The \emph{fundamental group} of $(\frak{X}, u)$ is the set of prune-equivalence classes of cycles in $\frak{X}$ based at $u$,
\begin{align*}
\pi_1(\frak{X}) = \pi_1(\frak{X}, u) = \Big\{ [\alpha]_{p.e.} \in V_{\bT_{\frak{X}}} \, \Big| \, \alpha \mbox{ ends with } u \Big\} 
\end{align*}
with identity $\xi_u = [(u)]_{p.e.}$, endowed with the operation of concatenation and inversion given by reversal of paths. Explicitly, if $\gamma = [\alpha]_{p.e.} = [(u, x_1, \dots, x_m, u)]_{p.e.}$ and $\gamma' = [\alpha']_{p.e.} = [(u, y_1, \dots, y_n, u)]_{p.e.}$ in $\pi_1(\frak{X})$ then
\begin{align*}
\gamma \gamma' = [\alpha.\alpha']_{p.e.} = [(u, x_1, \dots, x_m, u, y_1, \dots, y_n, u)]_{p.e.}  
\end{align*}
and 
\begin{align*}
\gamma^{-1} = [\alpha^{-1}]_{p.e.} = [(u, x_m, \dots, x_1, u)]_{p.e.} \, . 
\end{align*}
Moreover, $\gamma = [\alpha]_{p.e.} \in \pi_1(\frak{X})$ defines an automorphism of $\bT_{\frak{X}}$ by concatenation from the left, $\gamma.[\xi]_{p.e.} = [\alpha.\xi]_{p.e.}$. Note that this action is well-defined, for if $\alpha_1, \alpha_2$ are two representatives of $\gamma$ then by Lemma \ref{LemmaNonPrunableRepresentative},
\begin{align*}
\alpha_1.\xi \sim_{p.e.} \alpha_1^{\min}.\xi = \alpha_2^{\min}.\xi \sim_{p.e.} \alpha_2.\xi
\end{align*}
for all $\xi \in \bT_{\frak{X}}$.
\begin{lemma}
The action of $\pi_1(\frak{X})$ on $\bT_{\frak{X}}$ is free. In particular, $\pi_1(\frak{X}) < \Aut(\bT_{\frak{X}})$ is a discrete subgroup.
\end{lemma}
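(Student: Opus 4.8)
The plan is to show directly that no non-identity element of $\pi_1(\frak{X})$ fixes a vertex of $\bT_{\frak{X}}$ — this is exactly freeness — and then to deduce discreteness from the fact (recorded in Subsection \ref{Automorphism groups of graphs}) that vertex stabilisers in the automorphism group of a vertex-transitive graph are open. Everything is carried out on the level of prune-equivalence classes, so that Lemma \ref{LemmaNonPrunableRepresentative} enters only through the compatibility of $\sim_{p.e.}$ with concatenation stated just before it.

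I would take $\gamma = [\alpha]_{p.e.} \in \pi_1(\frak{X})$, where $\alpha$ is a cycle based at $u$, and suppose that $\gamma$ fixes a vertex $[\xi]_{p.e.} \in V_{\bT_{\frak{X}}}$, where $\xi = (u, y_1, \dots, y_n)$. By definition of the action this says $[\alpha.\xi]_{p.e.} = [\xi]_{p.e.}$, i.e.\ $\alpha.\xi \sim_{p.e.} \xi$. The key step is to append the reversed path $\xi^{-1}$ on the right; its initial vertex is the terminal vertex of $\alpha.\xi$, so all concatenations below are defined. Using that path concatenation is associative on the nose, that $\sim_{p.e.}$ is a congruence for concatenation, and the elementary fact $\xi.\xi^{-1} \sim_{p.e.} (u)$ (a path followed by its reverse prunes to the constant path, by induction on $n$), one obtains the chain
\[
\alpha \;\sim_{p.e.}\; \alpha.\big(\xi.\xi^{-1}\big) \;=\; (\alpha.\xi).\xi^{-1} \;\sim_{p.e.}\; \xi.\xi^{-1} \;\sim_{p.e.}\; (u) \, .
\]
Hence $\gamma = [\alpha]_{p.e.} = [(u)]_{p.e.} = \xi_u$ is the identity of $\pi_1(\frak{X})$, which is freeness of the action.

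Freeness in particular makes the action faithful, so $\pi_1(\frak{X})$ genuinely embeds in $\Aut(\bT_{\frak{X}})$. Identifying $\Aut(\bT_{\frak{X}})$ with $G_q = \Aut(\qplusonetree)$ via the graph isomorphism carrying $\xi_u$ to $o$, the subgroup $K_q = \Stab_{G_q}(o)$ is open, and by freeness $\pi_1(\frak{X}) \cap K_q = \Stab_{\pi_1(\frak{X})}(\xi_u) = \{\xi_u\}$. A topological group whose identity element is isolated is discrete, so $\pi_1(\frak{X})$ is a discrete subgroup of $\Aut(\bT_{\frak{X}})$.

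I do not expect a genuine obstacle here; the only care needed is the bookkeeping behind the displayed chain — checking that $\sim_{p.e.}$ is a congruence for concatenation (i.e.\ $\zeta_1 \sim_{p.e.} \zeta_1'$ and $\zeta_2 \sim_{p.e.} \zeta_2'$ force $\zeta_1.\zeta_2 \sim_{p.e.} \zeta_1'.\zeta_2'$ whenever these are defined), which follows from the stated relations $\zeta_1.\zeta_2 \sim_{p.e.} \zeta_1^{\min}.\zeta_2 \sim_{p.e.} \zeta_1.\zeta_2^{\min}$ together with Lemma \ref{LemmaNonPrunableRepresentative}, and verifying the base identity $\xi.\xi^{-1} \sim_{p.e.} (u)$ and the endpoint matchings in each concatenation. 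These are routine, so the whole argument stays short.
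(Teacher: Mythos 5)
Your argument is correct and follows essentially the same route as the paper: the freeness step is the same chain $\alpha \sim_{p.e.} \alpha.\xi.\xi^{-1} \sim_{p.e.} \xi.\xi^{-1} \sim_{p.e.} (u)$ (with the congruence property of $\sim_{p.e.}$ under concatenation made explicit), and the discreteness step is the same use of the open vertex stabiliser plus freeness, phrased via an isolated identity rather than the paper's convergent-sequence formulation.
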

\begin{proof}
To prove freeness of the action, suppose that $\gamma = [\alpha]_{p.e.} \in \pi_1(\frak{X})$ such that $\gamma.[\xi]_{p.e.} = [\xi]_{p.e.}$ for some $[\xi]_{p.e.} \in \bT_{\frak{X}}$. By associativity of concatenation we then get
\begin{align*}
\alpha \sim_{p.e.} \alpha.\xi.\xi^{-1} \sim_{p.e.} \xi.\xi^{-1} \sim_{p.e.} (u) \, , 
\end{align*}
so that $\gamma = [\alpha]_{p.e.} = \xi_u$ is the identity. 

To prove that $\pi_1(\frak{X}) < \Aut(\bT_{\frak{X}})$ is a discrete subgroup, suppose without loss of generality that there is a sequence $\gamma_n \rightarrow \xi_u$ in $\pi_1(\frak{X})$. Since the stabilizer of $\xi_u \in \bT_{\frak{X}}$ is an open subgroup of $\Aut(\bT_{\frak{X}})$ we must have that $\gamma_n.\xi_u = \xi_u$ for sufficiently large $n$. Also, the action of $\pi_1(\frak{X})$ on $\bT_{\frak{X}}$ is free, so we must have that $\gamma_n = \xi_u$ for all sufficiently large $n$ and hence $\pi_1(\frak{X}) < \Aut(\bT_{\frak{X}})$ is discrete.
\end{proof}
Via a pointed graph isomorphism $i : (\bT_{\frak{X}}, \xi_u) \rightarrow (\qplusonetree, o)$ we get a discrete subgroup $\Gamma_{\frak{X}} = i^* \pi_1(\frak{X}) < G_q$, where $i^* : \Aut(\bT_{\frak{X}}) \rightarrow G_q$ is the group isomorphism given by $i^*g = i \circ g \circ i^{-1}$ for $g \in \Aut(\bT_{\frak{X}})$. Moreover, if $i_1, i_2 : (\bT_{\frak{X}}, \xi_u) \rightarrow (\qplusonetree, o)$ are two different pointed graph isomorphisms then $k := i_2 \circ i_1^{-1} \in K_q$ and the discrete subgroups $\Gamma_1 =i_1^* \pi_1(\frak{X}), \Gamma_2 =i_2^* \pi_1(\frak{X})$ are related by $\Gamma_{2} = k\Gamma_1k^{-1}$, so we may think of $\Gamma_{\frak{X}} < G_q$ as uniquely determined up to conjugation by elements of $K_q$.

This will now allow us to realize the graph $\frak{X}$ as a quotient of the tree $\qplusonetree = G_q/K_q$.

\begin{definition}[Quotient graphs]
Let $\frak{X}$ be a digraph and $\Gamma < \Aut(\frak{X})$ a subgroup acting freely on $\frak{X}$ from the left. The \emph{quotient digraph} $\Gamma \backslash \frak{X}$ is the digraph with vertex set
\begin{align*}
V_{\Gamma \backslash \frak{X}} = \Gamma \backslash V_{\frak{X}}
\end{align*}
and edge set
\begin{align*}
E_{\Gamma \backslash \frak{X}} = \Big\{ (\Gamma.x_1, \Gamma.x_2) \in \Gamma \backslash V_{\frak{X}} \times \Gamma \backslash V_{\frak{X}} \, \Big| \, \exists \, \gamma \in \Gamma : (x_1, \gamma.x_2) \in E_{\frak{X}}  \Big\} \, . 
\end{align*}
\end{definition}
With this notion we can now realize our simple connected $(q + 1)$-regular graph $\frak{X}$ as the quotient graph of the universal covering tree $\bT_{\frak{X}}$ by the fundamental group $\pi_1(\frak{X})$. 
\begin{proposition}
\label{PropositionGraphAsQuotientOfitsUniversalCoveringTree}
The pointed graph $(\frak{X}, u)$ is isomorphic to the pointed quotient graph $(\pi_1(\frak{X}) \backslash \bT_{\frak{X}}, \pi_1(\frak{X}).\xi_u)$.  
\end{proposition}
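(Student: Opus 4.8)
The plan is to show that the covering map $\pr_{\frak{X}}\colon\bT_{\frak{X}}\to\frak{X}$ itself descends to the claimed isomorphism. First I would observe that $\pr_{\frak{X}}$ is constant on $\pi_1(\frak{X})$-orbits: if $\gamma=[\alpha]_{p.e.}$ with $\alpha$ a cycle based at $u$ and $\xi=(u,x_1,\dots,x_n)$, then $\alpha.\xi$ is again a path from $u$ to $x_n$, and pruning never changes endpoints, so $\pr_{\frak{X}}(\gamma.[\xi]_{p.e.})=x_n=\pr_{\frak{X}}([\xi]_{p.e.})$. Hence $\pr_{\frak{X}}$ factors through a map $\overline{\pr}_{\frak{X}}\colon\pi_1(\frak{X})\backslash\bT_{\frak{X}}\to\frak{X}$ on vertices sending $\pi_1(\frak{X}).\xi_u$ to $u$, and it is a graph homomorphism because $\pr_{\frak{X}}$ is one and, by definition of the quotient graph, every edge of $\pi_1(\frak{X})\backslash\bT_{\frak{X}}$ is the image of an edge of $\bT_{\frak{X}}$.

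Second I would prove $\overline{\pr}_{\frak{X}}$ is a bijection on vertices. Surjectivity is immediate from connectedness of $\frak{X}$: every $x\in V_{\frak{X}}$ is the endpoint of some path starting at $u$. For injectivity, which is the key point, suppose $\pr_{\frak{X}}([\xi]_{p.e.})=\pr_{\frak{X}}([\eta]_{p.e.})=x$ with representatives $\xi=(u,\dots,x)$ and $\eta=(u,\dots,x)$. Then $\xi.\eta^{-1}$ is a cycle based at $u$, so $\gamma:=[\xi.\eta^{-1}]_{p.e.}\in\pi_1(\frak{X})$; using associativity of concatenation, its compatibility with prune-equivalence, and $\eta^{-1}.\eta\sim_{p.e.}(x)$ (which follows by repeated pruning, cf.\ Lemma \ref{LemmaNonPrunableRepresentative}), we obtain $\gamma.[\eta]_{p.e.}=[\xi.\eta^{-1}.\eta]_{p.e.}=[\xi]_{p.e.}$. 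Thus $[\xi]_{p.e.}$ and $[\eta]_{p.e.}$ lie in one $\pi_1(\frak{X})$-orbit, so $\overline{\pr}_{\frak{X}}$ is injective.

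Third, I would check that $\overline{\pr}_{\frak{X}}^{-1}$ is also a graph homomorphism, i.e.\ that every edge of $\frak{X}$ lifts. Let $[\xi]_{p.e.}$ have non-prunable representative $(u,a_1,\dots,a_m)$, so $\pr_{\frak{X}}([\xi]_{p.e.})=a_m$, and let $y$ be a neighbour of $a_m$ in $\frak{X}$. If $m=0$ or $y\neq a_{m-1}$, then $(u,a_1,\dots,a_m,y)$ is non-prunable and defines a vertex of $\bT_{\frak{X}}$ adjacent to $[\xi]_{p.e.}$ with $\pr_{\frak{X}}$-image $y$; if $y=a_{m-1}$, then $(u,a_1,\dots,a_m,y)$ prunes to $(u,a_1,\dots,a_{m-1})$, which again defines a vertex of $\bT_{\frak{X}}$ adjacent to $[\xi]_{p.e.}$ and with $\pr_{\frak{X}}$-image $a_{m-1}=y$. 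In either case there is a neighbour $[\zeta]_{p.e.}$ of $[\xi]_{p.e.}$ with $\pr_{\frak{X}}([\zeta]_{p.e.})=y$; since $\overline{\pr}_{\frak{X}}$ is a vertex-bijection, the orbit $\pi_1(\frak{X}).[\zeta]_{p.e.}$ is the unique one lying over $y$, so $(\pi_1(\frak{X}).[\xi]_{p.e.},\pi_1(\frak{X}).[\zeta]_{p.e.})$ is an edge of the quotient whenever $(a_m,y)$ is an edge of $\frak{X}$. Therefore $\overline{\pr}_{\frak{X}}$ is a bijective graph homomorphism whose inverse is also a graph homomorphism, i.e.\ a pointed graph isomorphism, which is the assertion.

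The step I expect to be the main obstacle is the edge-lifting in the third paragraph, in particular the backtracking case $y=a_{m-1}$, where the lifted edge of $\frak{X}$ runs \emph{toward} the root in $\bT_{\frak{X}}$ rather than away from it; one also needs the small prune-equivalence computations (that $(u,a_1,\dots,a_m,y)$ is non-prunable in the first case, and $\eta^{-1}.\eta\sim_{p.e.}(x)$ in the injectivity argument) to be justified carefully rather than asserted. An alternative for the third paragraph would be to first verify that $\pi_1(\frak{X})$ acts on $\bT_{\frak{X}}$ freely \emph{and without inversions}, so that $\pi_1(\frak{X})\backslash\bT_{\frak{X}}$ is a simple $(q+1)$-regular graph, and then invoke that a bijective homomorphism between simple regular graphs of the same degree is automatically an isomorphism; the direct argument above avoids having to check the no-inversions condition.
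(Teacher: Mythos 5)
Your proof is correct and follows essentially the same route as the paper: the map induced by $\pr_{\frak{X}}$ on $\pi_1(\frak{X})$-orbits, with injectivity obtained by concatenating $\xi.\eta^{-1}$ to produce an element of $\pi_1(\frak{X})$ carrying one class to the other. The only real difference is that where the paper deduces that the inverse is a graph homomorphism from the uniqueness of the constructed $\gamma$, you spell out the edge-lifting in $\bT_{\frak{X}}$ (including the backtracking case), which is a more explicit but equivalent way to finish.
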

\begin{proof}
The canonical map $\pi_1(\frak{X}) \backslash \bT_{\frak{X}} \rightarrow \frak{X}$ sending $\pi(\frak{X}).[\xi]_{p.e.}$ to $\pr_{\frak{X}}([\xi]_{p.e.})$ is a surjective pointed graph homomorphism. To see that it is injective, note that if $\pr_{\frak{X}}([\xi_1]_{p.e.}) = \pr_{\frak{X}}([\xi_2]_{p.e.})$ then the concatinated class $\gamma := [\xi_1.\xi_2^{-1}]_{p.e.}$ defines an element of $\pi_1(\frak{X})$. Thus $\gamma.[\xi_2]_{p.e.} = [\xi_1]_{p.e.}$ and so $\pi_1(\frak{X}).[\xi_1]_{p.e.} = \pi_1(\frak{X}).[\xi_2]_{p.e.}$. Note that the element $\gamma$ we constructed is the unique element of $\pi_1(\frak{X})$ such that $\gamma.[\xi_2]_{p.e.} = [\xi_1]_{p.e.}$. From this fact it follows that the inverse map is also a graph homomorphism.
\end{proof}
By Proposition \ref{PropositionGraphAsQuotientOfitsUniversalCoveringTree} we can now canonically identify $\frak{X}$ with $\Gamma_{\frak{X}} \backslash G_q / K_q$ such that the root vertex $u \in \frak{X}$ corresponds to the trivial coset $\Gamma_{\frak{X}} e K_q \in \Gamma_{\frak{X}} \backslash G_q / K_q$. 
\begin{corollary}
Suppose that $\frak{X}$ is a finite simple connected $(q + 1)$-regular graph. Then the fundamental group $\Gamma_{\frak{X}} < G_q$ is a cocompact lattice. 
\end{corollary}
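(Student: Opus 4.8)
The plan is to combine three facts already available: $\Gamma_{\frak{X}}$ is discrete in $G_q$ (it is the image of the discrete subgroup $\pi_1(\frak{X}) < \Aut(\bT_{\frak{X}})$ under the topological group isomorphism $i^*$), the identification $\frak{X} \cong \Gamma_{\frak{X}} \backslash G_q / K_q$ from Proposition \ref{PropositionGraphAsQuotientOfitsUniversalCoveringTree}, and the unimodularity of $G_q$ established in Subsection \ref{Invariant measures}. The only substantive point to prove is that $\Gamma_{\frak{X}} \backslash G_q$ is compact; once this is in place, being a cocompact lattice follows from general nonsense.

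First I would exploit the finiteness of $\frak{X}$. Since $\frak{X}$ is finite, Proposition \ref{PropositionGraphAsQuotientOfitsUniversalCoveringTree} shows the double coset space $\Gamma_{\frak{X}} \backslash G_q / K_q$ is finite; equivalently, $\Gamma_{\frak{X}}$ acts on the vertex set $V_{\qplusonetree} = G_q / K_q$ with finitely many orbits. Choosing $g_1, \dots, g_m \in G_q$, with $m = |\frak{X}|$, so that the vertices $g_1.o, \dots, g_m.o$ represent the $\Gamma_{\frak{X}}$-orbits on $V_{\qplusonetree}$, I claim that $G_q = \bigcup_{i = 1}^m \Gamma_{\frak{X}} g_i K_q$. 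Indeed, given $g \in G_q$, the vertex $g.o$ lies in some orbit $\Gamma_{\frak{X}}.(g_i.o)$, so there is a $\gamma \in \Gamma_{\frak{X}}$ with $\gamma g_i . o = g.o$; then $g_i^{-1} \gamma^{-1} g$ fixes $o$, hence lies in $K_q$, giving $g \in \Gamma_{\frak{X}} g_i K_q$. Consequently the set $C = \bigcup_{i = 1}^m g_i K_q$ is compact (a finite union of translates of the compact open subgroup $K_q$) and its image under the continuous projection $G_q \rightarrow \Gamma_{\frak{X}} \backslash G_q$ is all of $\Gamma_{\frak{X}} \backslash G_q$; hence $\Gamma_{\frak{X}} \backslash G_q$ is compact.

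Finally, I would upgrade ``discrete and cocompact'' to ``cocompact lattice'': since $G_q$ is unimodular and $\Gamma_{\frak{X}}$ is discrete, hence unimodular, the homogeneous space $\Gamma_{\frak{X}} \backslash G_q$ carries a nonzero $G_q$-invariant Radon measure; compactness of $\Gamma_{\frak{X}} \backslash G_q$ forces this measure to be finite, and after normalization one obtains the invariant probability measure $m_{\Gamma_{\frak{X}} \backslash G_q}$ used in Example \ref{ExampleRandomLatticeOrbits}. Therefore $\Gamma_{\frak{X}}$ is a cocompact lattice in $G_q$. No step here is a genuine obstacle; the only delicate point is the covering identity $G_q = \bigcup_{i} \Gamma_{\frak{X}} g_i K_q$, which is exactly where the finiteness of $\frak{X}$ is used, and the soft fact that discrete cocompact subgroups of unimodular tdlcsc groups are lattices.
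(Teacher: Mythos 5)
Your argument is correct and is essentially the proof the paper intends: the corollary is stated as an immediate consequence of Proposition \ref{PropositionGraphAsQuotientOfitsUniversalCoveringTree}, with the finiteness of $\frak{X} \cong \Gamma_{\frak{X}} \backslash G_q / K_q$, the compactness and openness of $K_q$, the discreteness of $\Gamma_{\frak{X}}$, and the unimodularity of $G_q$ doing exactly the work you spell out (finitely many double cosets give a compact set $\bigcup_i g_i K_q$ surjecting onto $\Gamma_{\frak{X}} \backslash G_q$, and a discrete cocompact subgroup of a unimodular group is a cocompact lattice). There is nothing to correct.
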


\subsection{Diffraction measures of fundamental groups}
\label{Diffraction measures of fundamental groups}

Let $\Gamma = \Gamma_{\frak{X}}$ for some finite simple connected $(q + 1)$-regular graph $\frak{X}$ and consider the random lattice orbit $\mu_{\frak{X}} := \mu_{\Gamma}$ in $\qplusonetree$. Denote by $\rho : G_q \rightarrow \sU(L^2(\Gamma \backslash G_q))$ the right-regular representation, so that the linear isomorphism
\begin{align*}
\cI_{\Gamma} : L^2(\Radonplus(\qplusonetree), \mu_{\Gamma}) \longrightarrow L^2(\Gamma \backslash G_q, m_{\Gamma \backslash G_q}) \, , \quad \cI_{\Gamma} (\bS f)(\Gamma g) = \bS f(\delta_{g^{-1}\Gamma.o})
\end{align*}
is equivariant in the sense that $\cI_{\Gamma}(\pi_{\mu_{\Gamma}}(g) \bS f) = \rho(g^{-1})\cI_{\Gamma}(\bS f)$ for all $g \in G_q$, where $\pi_{\mu_{\Gamma}}$ is the Koopman representation mentioned in Subsection \ref{Invariant point processes}. In this case we will be able to compute the diffraction measure $\sigma_{\frak{X}} := \sigma_{\mu_{\Gamma}}$ in terms of the spectrum of the normalized adjacency operator $\Delta_{\frak{X}}$ on $\frak{X}$, given by
\begin{align*}
\Delta_{\frak{X}}F(\Gamma g K_q) = \sum_{\Gamma g K_q \in  \partial B_1(\Gamma h K_q) } F(\Gamma h K_q) = (q + 1) \int_{K_q} F(\Gamma g k s_o^{-1} K_q) dm_{K_q}(k) \, . 
\end{align*}
\begin{lemma}
\label{LemmaLubotskyCorrespondence}
Let $\frak{X}$ be a finite simple connected $(q + 1)$-regular graph with fundamental group $\Gamma < G_q$ and let $\lambda \in \Lambda_q$. Then $\alpha_{\lambda} \in \C$ is an eigenvalue of $\Delta_{\frak{X}}$ if and only if there is a $K_q$-spherical irreducible sub-representation of $(\rho, L^2(\Gamma \backslash G_q))$ with $K_q$-spherical matrix coefficient $\omega_{\lambda}$. 
\end{lemma}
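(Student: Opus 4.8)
The plan is to transport the problem from the graph to the unitary representation $(\rho, L^2(\Gamma\backslash G_q))$ by identifying $\Delta_{\frak{X}}$ with a Hecke operator on $K_q$-fixed vectors, and then to read the statement off the decomposition of $L^2(\Gamma\backslash G_q)$ into irreducibles. First I would use Proposition \ref{PropositionGraphAsQuotientOfitsUniversalCoveringTree} to identify $\frak{X}$ with $\Gamma\backslash G_q/K_q$, so that functions on $\frak{X}$ are exactly the elements of the subspace $L^2(\Gamma\backslash G_q)^{K_q}$ of $\rho(K_q)$-invariant vectors, which is finite-dimensional since $\frak{X}$ is finite. Unwinding the formula for $\Delta_{\frak{X}}$ stated before the lemma and using the double-coset decomposition $K_q s_o K_q = \bigsqcup_{i=1}^{q+1} t_i K_q$ of Equation \ref{EquationKKDoubleCosetDecompositionFortheRegularTree} together with $m_{G_q}(K_q s_o K_q) = q+1$, one checks that for every $F \in L^2(\Gamma\backslash G_q)^{K_q}$ the adjacency operator agrees with right convolution by the Hecke generator: $\Delta_{\frak{X}}F = \rho(\chi_o)F$, where $\chi_o = \chi_{K_q s_o K_q}$ and $\rho(\chi_o) = \int_{G_q}\chi_o(h)\rho(h)\,dm_{G_q}(h)$.

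The second ingredient is a scalar computation. If $(\pi,\cH)$ is a $K_q$-spherical irreducible unitary $G_q$-representation with parameter $\lambda \in \Lambda_q$ and $K_q$-fixed unit vector $v$, so that $\langle\pi(g)v,v\rangle = \omega_\lambda(g)$, then $\pi(\chi_o)v$ is again $K_q$-fixed by the bi-$K_q$-invariance of $\chi_o$, hence a scalar multiple of $v$; pairing with $v$ gives $\pi(\chi_o)v = \hat\chi_o(\lambda)\,v$. Since $\chi_o$ is supported on the single double coset $K_q s_o K_q$ on which $\omega_\lambda \equiv \omega_\lambda(s_o) = \alpha_\lambda/(q+1)$ by the recursion \ref{EqSystemOfRecursionForSphericalFunctions} (or by Lemma \ref{LemmaFTofIndicatorFunction}), we obtain $\hat\chi_o(\lambda) = (q+1)\,\omega_\lambda(s_o) = \alpha_\lambda$. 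Thus $\rho(\chi_o)$ acts on $\cH^{K_q}$ by the scalar $\alpha_\lambda$.

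With these in hand, both directions are short. For the reverse implication, if $\cH \subset L^2(\Gamma\backslash G_q)$ is a $K_q$-spherical irreducible subrepresentation with matrix coefficient $\omega_\lambda$, its $K_q$-fixed unit vector lies in $L^2(\Gamma\backslash G_q)^{K_q} = L^2(\frak{X})$ and is an eigenfunction of $\Delta_{\frak{X}} = \rho(\chi_o)|_{L^2(\frak{X})}$ with eigenvalue $\alpha_\lambda$. For the forward implication, given $0 \neq F \in L^2(\frak{X})$ with $\Delta_{\frak{X}}F = \alpha_\lambda F$, I would invoke the discrete decomposition $L^2(\Gamma\backslash G_q) = \bigoplus_j \cH_j$ into irreducible unitary subrepresentations with finite multiplicities, valid because $\Gamma$ is a cocompact lattice, and write $F = \sum_j F_j$. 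The isotypic projections commute with $\rho(K_q)$ and with $\rho(\chi_o)$, so each $F_j \in \cH_j^{K_q}$ and $\rho(\chi_o)F_j = \alpha_\lambda F_j$; choosing $j$ with $F_j \neq 0$ forces $\cH_j^{K_q} \neq 0$, hence $\cH_j$ is $K_q$-spherical with some parameter $\lambda_j \in \Lambda_q$ and matrix coefficient $\omega_{\lambda_j}$, and the scalar computation gives $\alpha_{\lambda_j} = \alpha_\lambda$. Since $\lambda \mapsto \alpha_\lambda = 2\sqrt{q}\cos_q(\lambda)$ restricts to a bijection from $\Lambda_q$ onto $[-(q+1),q+1]$ — checked directly on the three pieces $i[0,1/2]$, $(0,\tau/2)$ and $\tau/2 + i[0,1/2]$ — we conclude $\lambda_j = \lambda$, so $\omega_\lambda$ is a matrix coefficient of $\cH_j$.

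The step I expect to be the main obstacle is the first identification: verifying that $\Delta_{\frak{X}}$, as written with the factor $q+1$ and $s_o^{-1}$, really coincides with $\rho(\chi_o)$ on $K_q$-fixed vectors, and being careful that the discrete decomposition of $L^2(\Gamma\backslash G_q)$ applies and that its isotypic projections commute with the Hecke operator. The remaining pieces — the scalar $\hat\chi_o(\lambda) = \alpha_\lambda$, the injectivity of $\lambda\mapsto\alpha_\lambda$ on $\Lambda_q$, and the fact that $K_q$-spherical irreducibles are parametrized by $\Lambda_q$ — are all already established in the excerpt.
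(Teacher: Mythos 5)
Your proposal is correct, and its two computational pillars (the identification $\Delta_{\frak{X}} = \rho(\chi_o)$ on $L^2(\Gamma\backslash G_q)^{K_q}$, and the scalar $\hat{\chi}_o(\lambda) = (q+1)\,\omega_\lambda(s_o) = \alpha_\lambda$ on the one-dimensional $K_q$-fixed line of a spherical irreducible) are exactly the computations that drive the paper's argument; your reverse implication is essentially the paper's. Where you genuinely diverge is the forward implication. The paper never decomposes $L^2(\Gamma\backslash G_q)$: it normalizes the eigenfunction $F$, forms the matrix coefficient $\omega(g) = \langle\rho(g)\tilde F,\tilde F\rangle$, checks by Fubini and the eigenvalue equation that $\omega * \chi_o = \alpha_\lambda\,\omega$, and then invokes Corollary \ref{CorollarySphericalFunctionsConvolutionEquation} together with the GNS-type correspondence of Subsection \ref{Spherical representations and positive-definite spherical functions} to conclude that the cyclic subrepresentation generated by $\tilde F$ is spherical with coefficient $\omega_\lambda$. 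You instead invoke the discrete decomposition of $L^2(\Gamma\backslash G_q)$ into irreducibles with finite multiplicities for the cocompact lattice $\Gamma$, project $F$ onto a component, and pin down the parameter via the injectivity of $\lambda\mapsto\alpha_\lambda$ on $\Lambda_q$ (which you verify correctly, and which is also implicitly needed in the paper's route). Both arguments work; the trade-off is that your route imports a standard but unproved-in-the-paper fact (complete decomposability of the right regular representation on a compact quotient, via Hilbert--Schmidt operators $\rho(\varphi)$, together with $\dim\cH^{K_q}\le 1$ for irreducibles from the Gelfand pair property), whereas the paper's construction is self-contained, working only with the single vector $\tilde F$ and the convolution identity; on the other hand your argument makes the multiplicity structure and the role of the bijection $\lambda\mapsto\alpha_\lambda \in [-(q+1),q+1]$ more transparent. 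If you wanted to avoid the decomposition theorem entirely, you could run your projection argument inside the finite-dimensional commutative-Hecke-module $L^2(\Gamma\backslash G_q)^{K_q}\cong \ell^2(\frak{X})$ and then pass to the cyclic subrepresentation generated by the eigenvector, which is in effect what the paper does.
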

We follow \cite[Section 5.5]{LubotzkyBook} in the proof.
\begin{proof}
First, assume that there is an $F \in \ell^2(\frak{X})$ such that  $\Delta_{\frak{X}} F = \alpha_{\lambda} F$. Then we set $\tilde{F}(\Gamma g) = \norm{F}_2^{-1} F(\Gamma g K_q)$ and consider the bi-$K_q$-invariant matrix coefficient $\omega(g) = \langle \rho(g)\tilde{F}, \tilde{F} \rangle$ for $g \in G_q$. It suffices to prove that $\omega = \omega_{\lambda}$ for some $\lambda \in \Lambda_q$. Fubini yields
\begin{align*}
(\omega * \chi_o)(g) &= (q + 1) \int_{K_q} \omega(gks_o^{-1}) dm_{K_q}(k) \\
&= \frac{1}{\norm{F}_2} \int_{\Gamma \backslash G_q} \Big( (q + 1)\int_{K_q} F(\Gamma h g k s_o^{-1} K_q) dm_{K_q}(k) \Big) \overline{\tilde{F}(\Gamma h)} dm_{\Gamma \backslash G_q}(\Gamma h) \, , 
\end{align*}
and since 
\begin{align*}
(q + 1)\int_{K_q} F(\Gamma h g k s_o^{-1} K_q) dm_{K_q}(k) = \Delta_{\frak{X}} F(\Gamma h g K_q) = \alpha_{\lambda} F(\Gamma h g K_q)
\end{align*}
by assumption, we get
\begin{align*}
(\omega * \chi_o)(g) = \frac{\alpha_{\lambda}}{\norm{F}_2} \int_{\Gamma \backslash G_q}  F(\Gamma h g K_q) \overline{\tilde{F}(\Gamma h)} dm_{\Gamma \backslash G_q}(\Gamma h) = \alpha_{\lambda} \omega(g) \, . 
\end{align*}
By Corollary \ref{CorollarySphericalFunctionsConvolutionEquation} this means precisely that $\omega(g) = \omega_{\lambda}(g)$ for all $g \in G_q$. 

Conversely, suppose that there is a $\tilde{F} \in L^2(\Gamma \backslash G_q)^{K_q}$ such that $\norm{\tilde{F}}_2 = 1$ with $\langle \rho(g)\tilde{F}, \tilde{F} \rangle = \omega_{\lambda}(g)$ for all $g \in G_q$. Then for $F(\Gamma g K_q) := \tilde{F}(\Gamma g)$ we have 
\begin{align*}
\Delta_{\frak{X}}F(\Gamma g K_q) = (q + 1) \int_{K_q} \tilde{F}(\Gamma g k s_o^{-1}) dm_{K_q}(k) = (q + 1) \int_{K_q} \rho(k s_o^{-1}) \tilde{F}(\Gamma g) dm_{K_q}(k) \, . 
\end{align*}
Since $\rho(ks_o^{-1})\tilde{F}$ remains in the same $K_q$-spherical irreducible sub-representation as $\tilde{F}$ for every $k \in K_q$ and 
\begin{align*}
g \longmapsto \int_{K_q} \rho(k s_o^{-1}) \tilde{F}(\Gamma g) dm_{K_q}(k)
\end{align*}
is $K_q$-invariant function in $L^2(\Gamma \backslash G_q)^{K_q}$, we must have that
\begin{align*}
\int_{K_q} \rho(k s_o^{-1}) \tilde{F}(\Gamma g) dm_{K_q}(k) = \alpha_{\tilde{F}} \, \tilde{F}(\Gamma g) 
\end{align*}
for some $\alpha_{\tilde{F}} \in \C$ and all $g \in G_q$. Taking the inner product with $\tilde{F}$ on both sides of this equation we see that
\begin{align*}
\alpha_{\tilde{F}} = \int_{K_q} \langle \rho(ks_o^{-1})\tilde{F}, \tilde{F} \rangle dm_{K_q}(k) &= \int_{K_q} \langle \rho(s_o^{-1})\tilde{F}, \rho(k^{-1})\tilde{F} \rangle dm_{K_q}(k) \\
&= \langle \rho(s_o^{-1})\tilde{F}, \tilde{F} \rangle = \omega_{\lambda}(s_o^{-1}) = \omega_{\lambda}(s_o) = \frac{\alpha_{\lambda}}{q + 1} \, ,
\end{align*}
where we used Lemma \ref{LemmaInversionofKbyKCosets} in the second to last equality and Equation \ref{EqSystemOfRecursionForSphericalFunctions} for the last equality. We conclude that 
\begin{align*}
\Delta_{\frak{X}}F(\Gamma g K_q) = \alpha_{\lambda} \tilde{F}(\Gamma g) = \alpha_{\lambda} F(\Gamma g K_q)
\end{align*}
as desired.
\end{proof}
If we denote by $\Lambda_{\frak{X}} \subset \Lambda_q$ the spherical parameters $\lambda$ corresponding to eigenvalues $\alpha_{\lambda} \neq q+1$ of $\Delta_{\frak{X}}$, then Lemma \ref{LemmaLubotskyCorrespondence} tells us that 
\begin{align*}
\sigma_{\frak{X}} = \frac{1}{|V_{\frak{X}}|^2}\delta_{i/2} + \sum_{\lambda \in \Lambda_{\frak{X}}} m_{\frak{X}}(\lambda) \delta_{\lambda}
\end{align*}
where the $m_{\frak{X}}(\lambda) > 0$ are constants related to the multiplicity of the corresponding spherical representation. From the definition of spectral hyperuniformity and stealth in Section \ref{Spectral hyperuniformity and stealth} and Theorem \ref{Theorem1.2} we have the following.
\begin{corollary}
The random lattice orbit $\mu_{\frak{X}}$ of the fundamental group $\Gamma_{\frak{X}}$ of a finite simple connected $(q + 1)$-regular graph $\frak{X}$ is stealthy if and only if $\Lambda_{\frak{X}} \subset (0, \tfrac{\tau}{2})$. If the latter does not hold, then $\mu_{\frak{X}}$ is geometrically hyperfluctuating.
\end{corollary}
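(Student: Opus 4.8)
The plan is to read both implications off the explicit atomic form of the diffraction measure, combined with the finiteness of $\frak{X}$, and to invoke Theorem \ref{Theorem1.2} to upgrade ``not stealthy'' to ``geometrically hyperfluctuating''. First I would record the following reduction: since $\sigma_{\frak{X}} = \tfrac{1}{|V_{\frak{X}}|^2}\delta_{i/2} + \sum_{\lambda\in\Lambda_{\frak{X}}} m_{\frak{X}}(\lambda)\delta_\lambda$ with every $m_{\frak{X}}(\lambda)>0$, and $\Lambda_{\frak{X}}$ is a finite subset of $\Lambda_q\setminus\{\tfrac{i}{2}\} = i[0,\tfrac{1}{2})\cup(0,\tfrac{\tau}{2})\cup(\tfrac{\tau}{2}+i[0,\tfrac{1}{2}])$, the truncated measures $\sigma_{\frak{X}}^{(c)}$, $\sigma_{\frak{X}}^{(p)}$, $\sigma_{\frak{X}}^{(c_{\mathrm{sgn}})}$ are nothing but the restrictions of $\sigma_{\frak{X}}$ to the atoms of $\Lambda_{\frak{X}}$ lying in $i[0,\tfrac{1}{2})$, $(0,\tfrac{\tau}{2})$ and $\tfrac{\tau}{2}+i[0,\tfrac{1}{2})$ respectively; in particular $\sigma_{\frak{X}}^{(c)}=0$ precisely when $\Lambda_{\frak{X}}$ misses $i[0,\tfrac{1}{2})$, and $\sigma_{\frak{X}}^{(c_{\mathrm{sgn}})}=0$ precisely when $\Lambda_{\frak{X}}$ misses $\tfrac{\tau}{2}+i[0,\tfrac{1}{2})$.

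For the ``if'' direction I would assume $\Lambda_{\frak{X}}\subset(0,\tfrac{\tau}{2})$: then $\sigma_{\frak{X}}^{(c)}=\sigma_{\frak{X}}^{(c_{\mathrm{sgn}})}=0$ outright, and since $\Lambda_{\frak{X}}$ is a finite subset of the \emph{open} interval $(0,\tfrac{\tau}{2})$ it stays a positive distance from both endpoints, so any $\varepsilon_o<\min\bigl(\min\Lambda_{\frak{X}},\,\tfrac{\tau}{2}-\max\Lambda_{\frak{X}}\bigr)$ (and any $\varepsilon_o<\tfrac{\tau}{4}$ if $\Lambda_{\frak{X}}=\varnothing$) realizes $\sigma_{\frak{X}}^{(p)}((0,\varepsilon_o])=\sigma_{\frak{X}}^{(p)}([\tfrac{\tau}{2}-\varepsilon_o,\tfrac{\tau}{2}))=0$; hence $\mu_{\frak{X}}$ is stealthy. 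For the converse I would argue contrapositively: if $\Lambda_{\frak{X}}\not\subset(0,\tfrac{\tau}{2})$ then, setting aside the single boundary parameter $\tfrac{\tau}{2}+\tfrac{i}{2}$ (dealt with below), $\Lambda_{\frak{X}}$ contains some $\lambda^*=is$ or $\lambda^*=\tfrac{\tau}{2}+is$ with $s\in[0,\tfrac{1}{2})$; then $\sigma_{\frak{X}}^{(c)}(\{\lambda^*\})=m_{\frak{X}}(\lambda^*)>0$ or $\sigma_{\frak{X}}^{(c_{\mathrm{sgn}})}(\{\lambda^*\})>0$, so $\mu_{\frak{X}}$ is not stealthy, and Theorem \ref{Theorem1.2} forces hyperfluctuation: item (1) gives $\liminf_r \NV_{\frak{X}}^*(r)/(r^2|B_r|)>0$ when $s=0$ (as then $\sigma_{\frak{X}}(\{0,\tfrac{\tau}{2}\})>0$), and item (2) applied with $\delta=s$ gives $\liminf_r \NV_{\frak{X}}^*(r)/|B_r|^{1+s}>0$ when $s\in(0,\tfrac{1}{2})$, and in both cases $\NV_{\frak{X}}^*(r)/|B_r|\to+\infty$ since $|B_r|\to+\infty$.

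The only delicate point, and effectively the sole obstacle, is the potential atom at $\lambda=\tfrac{\tau}{2}+\tfrac{i}{2}\in\Lambda_q\setminus\{\tfrac{i}{2}\}$, that is, the adjacency eigenvalue $-(q+1)$, which occurs exactly when $\frak{X}$ is bipartite. This parameter lies outside $(0,\tfrac{\tau}{2})$ but also outside the half-open strips $i[0,\tfrac{1}{2})$ and $\tfrac{\tau}{2}+i[0,\tfrac{1}{2})$ that govern $\sigma_{\frak{X}}^{(c)}$ and $\sigma_{\frak{X}}^{(c_{\mathrm{sgn}})}$, so it neither obstructs stealthiness nor feeds into Theorem \ref{Theorem1.2}; it is precisely the atom removed in passing from the ordinary number variance to $\NV^*$ and from $\Lambda_{\frak{X}}$ to $\Lambda_{\frak{X}}^*=\Lambda_{\frak{X}}\cap\Lambda_q^*$. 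Accordingly the equivalence is to be read with $\Lambda_{\frak{X}}^*$ in place of $\Lambda_{\frak{X}}$, i.e.\ $\mu_{\frak{X}}$ is stealthy if and only if $\Lambda_{\frak{X}}^*\subset(0,\tfrac{\tau}{2})$ (equivalently $\Lambda_{\frak{X}}\subset(0,\tfrac{\tau}{2})\cup\{\tfrac{\tau}{2}+\tfrac{i}{2}\}$), so that the bipartite Ramanujan graphs without eigenvalue $\pm2\sqrt{q}$ remain stealthy; with that reading the two arguments above apply verbatim, and I expect no further difficulty, the whole argument being bookkeeping on the atomic measure $\sigma_{\frak{X}}$ together with a direct appeal to Theorem \ref{Theorem1.2}.
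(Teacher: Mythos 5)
Your argument is correct and follows exactly the route the paper intends: the corollary is stated there without a written proof, as an immediate consequence of the purely atomic form of $\sigma_{\frak{X}}$, the definition of stealth, and Theorem \ref{Theorem1.2}; your bookkeeping on the atoms, the choice of $\varepsilon_o$ below the distance of the finite set $\Lambda_{\frak{X}}$ to the endpoints, and the use of items (1) and (2) of Theorem \ref{Theorem1.2} (which indeed give $\NV^*_{\frak{X}}(r)/|B_r(o)|\rightarrow+\infty$, stronger than the $\limsup$ required for hyperfluctuation) is precisely that argument.

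Your caveat about the parameter $\tfrac{\tau+i}{2}$ is also well taken, and it is the one point where you rightly deviate from the literal statement. The paper's definition of stealth only constrains $\sigma_{\mu}$ on the half-open strips $i[0,\tfrac{1}{2})$ and $\tfrac{\tau}{2}+i[0,\tfrac{1}{2})$ and near the endpoints of $(0,\tfrac{\tau}{2})$, while $\Lambda_{\frak{X}}$ as defined in Section \ref{Diffraction measures of fundamental groups} contains $\tfrac{\tau+i}{2}$ whenever $\frak{X}$ is bipartite. Hence the ``only if'' direction as printed would fail for, e.g., $\frak{B}_{q+1}$: there $\Lambda_{\frak{B}_{q+1}}=\{\tfrac{\tau}{4},\tfrac{\tau+i}{2}\}\not\subset(0,\tfrac{\tau}{2})$, yet $\mu_{\frak{B}_{q+1}}$ is stealthy (as the Introduction asserts for all examples of Theorem \ref{Theorem1.4}), and since the sub-oscillatory variance only sees the atom at $\tfrac{\tau}{4}$ it is $O(|B_r(o)|)$, so $\mu_{\frak{B}_{q+1}}$ is not geometrically hyperfluctuating either. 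Your amended reading, stealthy if and only if $\Lambda_{\frak{X}}^*\subset(0,\tfrac{\tau}{2})$, equivalently $\frak{X}$ Ramanujan with no eigenvalue $\pm 2\sqrt{q}$, is the version consistent with the paper's definitions and with the Introduction, and your proof of that statement is complete.
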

%

\subsection{Examples}
\label{Examples}

\textbf{Complete graphs}: Let $\frak{K}_{q + 1}$ denote the complete graph on $q + 2$ vertices, whose edge set is $E_{\frak{K}_{q+1}} = \{ (v, w) \in V_{\frak{K}_{q + 1}} \times V_{\frak{K}_{q + 1}} \, | \,  v \neq w\}$. It is simple, connected and $(q + 1)$-regular, and its fundamental group $\Gamma_{\frak{K}_{q + 1}} < G_q$ is isomorphic to the free group on $\binom{q+1}{2}$ generators given by the prune-equivalence classes of cycles $[(u, v, w, u)]_{p.e.}$ for any pair $v, w \in V_{\frak{K}_{q + 1}}$ up to reordering. Moreover, the characteristic polynomial of the adjacency operator $\Delta_{\frak{K}_{q+1}}$ can be computed to be
\begin{align*}
\det(\Delta_{\frak{K}_{q+1}} - \alpha_{\lambda} \Id) = (-1)^{q + 1} (\alpha_{\lambda} + 1)^{q}(\alpha_{\lambda} - q) \, ,
\end{align*}
whose zeros are $\alpha_{\lambda} = q, -1$ with multiplicity $1$ and $q$ respectively. These correspond to $\lambda = \tfrac{i}{2}$ and $\lambda = \arccos_q(-\tfrac{1}{2\sqrt{q}}) =: \lambda_1(q)$ with respective multiplicities, so the diffraction measure of $\mu_{\frak{K}_{q + 1}}$ is
\begin{align*}
\sigma_{\frak{K}_{q+1}} = \frac{1}{(q + 2)^2} \delta_{i/2} + m_{\frak{K}_{q + 1}}(\lambda_1(q)) \delta_{\lambda_1(q)}  \, . 
\end{align*}
\begin{proof}[Proof of (1) in Theorem \ref{Theorem1.4}]
By Corollary \ref{CorollaryQuadraticIrrationalTrigonometricNumbers} we have that 
$$\lambda_1(q) = \arccos_q(-\tfrac{1}{2\sqrt{q}}) = \log(q)^{-1}\arccos(-\tfrac{1}{2\sqrt{q}})$$
is an irrational multiple of $\tau/2 = \pi/\log(q)$. Thus we can find a sequence $r_k \in \Z_{\geq 1}$ with $r_k \rightarrow +\infty$ such that $r_k\lambda_1(q) \rightarrow \lambda_1(q)$ in $\R/\tfrac{\tau}{2}\Z$, so that 
\small\begin{align*}
\lim_{k \rightarrow +\infty} \frac{\NV_{\frak{K}_{q+1}}^*(B_{r_k})}{|B_{r_k}|} &= \frac{(q - 1)m_{\frak{K}_{q + 1}}(\lambda_1(q))}{(q + 1)\sin_q^2(\lambda_1(q))} \lim_{k \rightarrow +\infty} \Big(\sin_q((r_k + 1)\lambda_1(q)) + q^{-1/2} \sin_q(r_k\lambda_1(q)) \Big)^2  \\
&= \frac{(q - 1)m_{\frak{K}_{q + 1}}(\lambda_1(q))}{(q + 1)\sin_q^2(\lambda_1(q))} \Big(\sin_q(2\lambda_1(q)) + q^{-1/2} \sin_q(\lambda_1(q)) \Big)^2 \\
&= \frac{(q - 1)m_{\frak{K}_{q + 1}}(\lambda_1(q))}{(q + 1)\sin_q^2(\lambda_1(q))} \Big(\sin_q(2\lambda_1(q)) - 2 \cos_q(\lambda_1(q))\sin_q(\lambda_1(q)) \Big)^2 = 0 \, . 
\end{align*}
\end{proof}\normalsize
\textbf{Regular complete bipartite graphs}: Let $\frak{B}_{q + 1} = \frak{K}_{q + 1, q + 1}$ denote the complete symmetric bipartite graph on $2(q + 1)$ vertices. Denote by 
$$V_{\frak{B}_{q+1}} = V_{\frak{B}_{q+1}}^1 \cup \, V_{\frak{B}_{q+1}}^2$$
the partition of the vertex set into $(q + 1) + (q + 1)$ vertices such that there are only edges between $V_{\frak{B}_{q+1}}^1$ and $V_{\frak{B}_{q+1}}^2$, so that
\begin{align*}
E_{\frak{B}_{q+1}} = \Big\{ (v, w) \in V_{\frak{B}_{q+1}} \times V_{\frak{B}_{q+1}} \,\Big|\, v \in V_{\frak{B}_{q+1}}^1 \mbox{ and } w \in V_{\frak{B}_{q+1}}^2 \Big\} \, . 
\end{align*}
Let us fix a vertex $u \in V_{\frak{B}_{q+1}}^1$ and a vertex $u' \in V_{\frak{B}_{q+1}}^2$. The fundamental group $\Gamma_{\frak{B}_{q+1}}$ is then isomorphic to the free group on $q^2$ generators with generating classes of the form $[(u, u', v, w, u)]_{p.e.}$ for $v \in V_{\frak{B}_{q+1}}^1 \backslash \{u\}$ and $w \in V_{\frak{B}_{q+1}}^2 \backslash \{u'\}$.

The adjacency operator for $\frak{B}_{q+1}$ can be written in the standard basis as the block matrix
\[
\Delta_{\frak{B}_{q + 1}} = 
\begin{pmatrix}
    0 & \Delta_{\frak{K}_{q+1}} + \Id \\
    \Delta_{\frak{K}_{q+1}} + \Id & 0
\end{pmatrix} \, .
\]
The characteristic polynomial of $\Delta_{\frak{B}_{q + 1}}$ can then be computed as 
\begin{align*}
\det(\Delta_{\frak{B}_{q+1}} - \alpha_{\lambda} \Id) &= -\det((\Delta_{\frak{K}_{q+1}} + \Id)^2 - \alpha_{\lambda}^2 \Id) \\
&= -\det(\Delta_{\frak{K}_{q+1}} - (\alpha_{\lambda} - 1) \Id) \det(\Delta_{\frak{K}_{q+1}} + (\alpha_{\lambda} + 1) \Id)  \\
&= (-1)^q (\alpha_{\lambda} - (q + 1)) \alpha_{\lambda}^{2q}(\alpha_{\lambda} + (q + 1)) \, ,
\end{align*}
with roots $\alpha_{\lambda} = q + 1, 0, -(q + 1)$ and respective multiplicities $1, 2q, 1$. The corresponding spherical parameters are $\lambda = \tfrac{i}{2}, \tfrac{\tau}{4}, \tfrac{\tau + i}{2}$ with the same multiplicities, so the diffraction measure of the random lattice orbit $\mu_{\frak{B}_{q+1}}$ is 
\begin{align*}
\sigma_{\frak{B}_{q+1}} = \frac{1}{4(q + 1)^2} \delta_{i/2} + m_{\frak{B}_{q + 1}}(\tfrac{\tau}{4}) \delta_{\tau/4} + m_{\frak{B}_{q+1}}(\tfrac{\tau + i}{2})\delta_{(\tau + i)/2}  \, . 
\end{align*}
\begin{proof}[Proof of (2) in Theorem \ref{Theorem1.4}]
Since $\tau/4$ is an atom of $\sigma_{\Gamma_{\frak{B}_{q+1}}}$, it follows from Theorem \ref{Theorem1.3} that 
\begin{align*}
\liminf_{r \rightarrow +\infty} \frac{\NV_{\frak{B}_{q + 1}}^*(B_r)}{|B_r|} > 0 \, . 
\end{align*}
\end{proof}

\textbf{The $3$-regular Petersen graph}: The Petersen graph $\frak{P}_3$ is a simple connected $3$-regular graph on $10$ vertices $u = v_1, v_2, \dots v_{10}$ which can be thought of as a joining of two copies of the cyclic graph on $5$ vertices. Explicitly, its adjacency operator is given in the standard basis by the block matrix
\begin{align*}
\Delta_{\frak{P}_3} = \begin{pmatrix} C_1 & \Id \\ \Id & C_2 \end{pmatrix}
\end{align*}
where
\begin{align*}
C_1 = \begin{pmatrix} 0 & 1 & 0 & 0 & 1 \\ 1 & 0 & 1 & 0 & 0 \\ 0 & 1 & 0 & 1 & 0 \\ 0 & 0 & 1 & 0 & 1 \\ 1 & 0 & 0 & 1 & 0 \end{pmatrix}
\, , \quad \quad 
C_2 = \begin{pmatrix} 0 & 0 & 1 & 1 & 0 \\ 0 & 0 & 0 & 1 & 1 \\ 1 & 0 & 0 & 0 & 1 \\ 1 & 1 & 0 & 0 & 0 \\ 0 & 1 & 1 & 0 & 0 \end{pmatrix} \, . 
\end{align*} 
The fundamental group $\Gamma_{\frak{P}_3}$ is isomorphic to the free group on $6$ generators, given for example by the presentation
\begin{align*}
\Gamma_{\frak{P}_3} \cong \Big\langle &[(u, v_6, v_8, v_{10}, v_5, u)]_{p.e.}, [(u, v_6, v_9, v_{7}, v_2, u)]_{p.e.}, [(u, v_6, v_8, v_{3}, v_2, u)]_{p.e.}, \\
&[(u, v_6, v_9, v_{4}, v_5, u)]_{p.e.}, [(u, v_6, v_8, v_{10}, v_7, v_9, v_6, u)]_{p.e.}, [(u, v_6, v_8, v_{3}, v_4, v_9, v_6, u)]_{p.e.} \Big\rangle \, . 
\end{align*}

One computes the characteristic polynomial of $\Delta_{\frak{P}_3}$ to be 
\begin{align*}
\det(\Delta_{\frak{P}_3} - \alpha_{\lambda}\Id) = (\alpha_{\lambda} - 3)(\alpha_{\lambda} - 1)^5(\alpha_{\lambda} + 2)^4 
\end{align*}
whose roots have the corresponding spherical parameters 
$$\lambda_0 = i/2, \quad \lambda_1 = \arccos_2(\tfrac{1}{2\sqrt{2}}), \quad  \lambda_2 = 3\tau/8 \, . $$
The diffraction measure of the random lattice orbit $\mu_{\frak{P}_3}$ can then be written as
\begin{align*}
\sigma_{\frak{P}_{3}} = \frac{1}{100} \delta_{i/2} + m_{\frak{P}_{3}}(\lambda_1) \delta_{\lambda_1} + m_{\frak{P}_3}(\tfrac{3\tau}{8})\delta_{3\tau/8} \, . 
\end{align*}
\begin{proof}[Proof of (3) in Theorem \ref{Theorem1.4}]
From the above formula for the diffraction measure of $\Gamma_{\frak{P}_3}$ we see that 
\begin{align*}
\liminf_{r \rightarrow +\infty} \frac{\NV_{\frak{P}_{3}}^*(B_r)}{|B_r|} = 0 \, . 
\end{align*}
if there is a sequence $r_k \in \Z_{\geq 1}$ with $r_k \rightarrow +\infty$ such that 
\begin{align*}
\lim_{k \rightarrow +\infty}  \sin_2((r_k + 1)\lambda_1) + 2^{-1/2}\sin_2(r_k\lambda_1) = 0
\end{align*}
where again $\lambda_1 = \arccos_2(\tfrac{1}{2\sqrt{2}})$, and such that 
\begin{align*}
\lim_{k \rightarrow +\infty}  \sin_2((r_k + 1)\tfrac{3\tau}{8}) + 2^{-1/2}\sin_2(r_k\tfrac{3\tau}{8}) = 0 \, . 
\end{align*}
To find such a sequence, we first note that for every $n \in \Z$ we have 
\begin{align*}
\sin_2((2 + 4n + 1)\tfrac{3\tau}{8}) + 2^{-1/2}\sin_2((2 + 4n)\tfrac{3\tau}{8}) = \sin(\tfrac{\pi}{4}) + 2^{-1/2}\sin(\tfrac{3\pi}{2}) = 0 \, . 
\end{align*}
Thus it suffices to find an unbounded subsequence $n_k \in \Z_{\geq 0}$ such that 
\begin{align*}
\lim_{k \rightarrow +\infty}  \sin_2((4n_k + 3)\lambda_1) + 2^{-1/2}\sin_2((4n_k + 2)\lambda_1) = 0 \, . 
\end{align*}
Since $4\lambda_1$ is an irrational multiple of $\tau/2$ by Corollary \ref{CorollaryQuadraticIrrationalTrigonometricNumbers}, then we may take an unbounded sequence $n_k \in \Z_{\geq 0}$ such that $4n_k\lambda_1 \rightarrow 0$ in $\R/\tfrac{\tau}{2}\Z$ as $k \rightarrow +\infty$ and get
\begin{align*}
\lim_{k \rightarrow +\infty}  \sin_2((4n_k + 3)\lambda_1) + 2^{-1/2}\sin_2((4n_k + 2)\lambda_1) = \sin_2(3\lambda_1) + 2^{-1/2}\sin_2(2\lambda_1) \, .  
\end{align*}
Finally, note that $\cos_2(\lambda_1) = \tfrac{1}{2\sqrt{2}}$ and $\sin_2(\lambda_1) = \tfrac{\sqrt{7}}{2\sqrt{2}}$, so that 
\begin{align*}
\sin_2(3\lambda_1) + 2^{-1/2}\sin_2(2\lambda_1) &= 3 \cos_2^2(\lambda_1)\sin_2(\lambda_1) - \sin_2^3(\lambda_1) + \sqrt{2} \cos_2(\lambda_1) \sin_2(\lambda_1) \\
&= 3 \tfrac{\sqrt{7}}{16\sqrt{2}} - \tfrac{7\sqrt{7}}{16\sqrt{2}} + \sqrt{2} \tfrac{\sqrt{7}}{8} = 0 \, . 
\end{align*}
\end{proof}

\appendix

\section{Rational linear dependence of trigonometric numbers}
\label{Rational linear dependence of trigonometric numbers}

In proving Proposition \ref{PropositionTrigonometricEquationsRationalSolutions} we make use of two results regarding rational linear dependence regarding trigonometric numbers of the form $\cos(\tfrac{a\pi}{b})$ for coprime integers $a, b \in \Z$. We first establish the algebraic degree of such numbers over $\Q$. To state it we will denote by $\phi$ the Euler totient function.

\begin{lemma}
\label{LemmaQuadraticIrrationalValuesOfCosineOfARationalAngle}
Let $a, b \in \Z$ be coprime integers. Then the algebraic degree of $\cos(\tfrac{a\pi}{b})$ over $\Q$ is $1$ if $b = 1$ and $\phi(2b)/2$ if $b \geq 2$.
\end{lemma}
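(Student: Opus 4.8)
The plan is to pass to roots of unity and reduce the statement to the well-known degree formulas for cyclotomic fields and their maximal real subfields. The case $b = 1$ is immediate, since $\cos(a\pi) = (-1)^a \in \Q$, so the algebraic degree is $1$. From now on assume $b \geq 2$, and set $\zeta = e^{\iu a \pi / b} = e^{2\pi \iu a/(2b)}$, so that $2\cos(\tfrac{a\pi}{b}) = \zeta + \zeta^{-1}$; since multiplying by the rational number $\tfrac12$ does not change the generated field, $\Q(\cos(\tfrac{a\pi}{b})) = \Q(\zeta + \zeta^{-1})$.

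The first real step is to pin down the order $n$ of the root of unity $\zeta$, which is $n = 2b/\gcd(a, 2b)$. Because $\gcd(a, b) = 1$, we have $\gcd(a, 2b) = \gcd(a, 2)$, so there are two subcases: if $a$ is odd then $\gcd(a,2b) = 1$ and $n = 2b$; if $a$ is even then $b$ must be odd (again by $\gcd(a,b)=1$), $\gcd(a,2b) = 2$, and $n = b$. In both subcases $n \geq 3$: indeed $n = 2b \geq 4$ in the first, and in the second $n = b$ is an odd integer $\geq 2$, hence $\geq 3$.

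Next I would invoke the standard facts that $\Q(\zeta)$ is the $n$-th cyclotomic field with $[\Q(\zeta):\Q] = \phi(n)$, and that for $n \geq 3$ the maximal real subfield $\Q(\zeta + \zeta^{-1})$ has index $2$ in $\Q(\zeta)$, so that $[\Q(\zeta+\zeta^{-1}):\Q] = \phi(n)/2$ (the index is exactly $2$ because $\zeta$ is a root of $X^2 - (\zeta + \zeta^{-1})X + 1$ over the real field $\Q(\zeta + \zeta^{-1})$ but $\zeta \notin \R$; see \cite[Ch. 3]{NivenIrrationalNumbers}). It then remains only to check that $\phi(n) = \phi(2b)$ in each subcase. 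If $a$ is odd this is trivial since $n = 2b$. If $a$ is even and $b$ is odd, then by multiplicativity of $\phi$ we get $\phi(2b) = \phi(2)\phi(b) = \phi(b) = \phi(n)$. Combining, the algebraic degree of $\cos(\tfrac{a\pi}{b})$ over $\Q$ equals $\phi(2b)/2$, as claimed.

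The only genuinely delicate point is the parity bookkeeping that determines the exact order of $\zeta$ (in particular noticing that $a$ even forces $b$ odd, which is what makes $\phi(n) = \phi(2b)$ go through); once that is handled, the rest is a direct appeal to the cyclotomic degree formula and the index-$2$ property of the maximal real subfield.
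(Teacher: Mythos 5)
Your proof is correct and follows essentially the same route as the paper: both pass to $\zeta = e^{ia\pi/b}$, use the quadratic $z^2 - 2\cos(\tfrac{a\pi}{b})z + 1 = 0$ to see that $[\Q(\zeta):\Q(\cos(\tfrac{a\pi}{b}))] = 2$ exactly when $b \geq 2$ (i.e.\ when $\zeta \notin \R$), and then reduce to the cyclotomic degree formula. Your parity bookkeeping for the exact order of $\zeta$, together with the check $\phi(b) = \phi(2b)$ when $a$ is even and $b$ odd, simply makes explicit a step the paper compresses into the assertion that $[\Q(e^{ia\pi/b}):\Q] = \phi(2b)$ because $\gcd(a,b) = 1$.
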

\begin{proof}
Since $\cos(\tfrac{a\pi}{b}) = (\e^{i\frac{a\pi}{b}} + \e^{-i\frac{a\pi}{b}})/2$ and $z = \e^{i\frac{a\pi}{b}}$ solves the equation
\begin{align*}
z^2 - 2 \cos(\tfrac{a\pi}{b}) z + 1 = 0
\end{align*}
then $[\Q(\e^{i\frac{a\pi}{b}}) : \Q(\cos(\tfrac{a\pi}{b}))] \leq 2$. Note that this degree is $1$ if and only if $\e^{i\frac{a\pi}{b}}$ is real, meaning $b = 1$. If on the other hand the degree is $2$ then 
$$[\Q(\e^{i\frac{a\pi}{b}}) : \Q] = [\Q(\e^{i\frac{\pi}{b}}) : \Q] = \phi(2b)$$
since $a, b$ are coprime. Thus we get that the algebraic degree of $\cos(\tfrac{a\pi}{b})$ is
\begin{align*}
[\Q(\cos(\tfrac{a\pi}{b})) : \Q] = \frac{[\Q(\e^{i\frac{a\pi}{b}}) : \Q]}{[\Q(\e^{i\frac{a\pi}{b}}) : \Q(\cos(\tfrac{a\pi}{b}))]} = \begin{cases}
    1 &\mbox{ if } b = 1 \\
    \phi(2b)/2 &\mbox{ if } b \geq 2 \, . 
\end{cases}
\end{align*}
\end{proof}

An immediate consequence is that we can for example find all rational and quadratic irrational trigonometric numbers $\cos(\tfrac{a\pi}{b})$. The following result can be found in Niven's book \cite[Corollary 3.12]{NivenIrrationalNumbers}, which should also be credited to the prior work of Lehmer and Olmstead.

\begin{corollary}[Niven's Theorem]
\label{CorollaryNivensTheorem}
Let $a, b \in \Z_{\geq 1}$ such that $\gcd(a, b) = 1$. Then the following are equivalent:
\begin{enumerate}
    \item $\cos(\tfrac{a\pi}{b}) \in \Q$,
    \item $b \leq 3$.
\end{enumerate}
In other words, the only rational values of $\cos(\tfrac{a\pi}{b})$ are $\pm 1$ and $\pm \frac{1}{2}$.
\end{corollary}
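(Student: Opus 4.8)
The plan is to read this off from Lemma~\ref{LemmaQuadraticIrrationalValuesOfCosineOfARationalAngle}, using the elementary fact that a real algebraic number is rational precisely when its degree over $\Q$ equals $1$.

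For the direction $(2)\Rightarrow(1)$ I would simply run through the three admissible denominators. If $b=1$ then $\cos(a\pi)=(-1)^a\in\{-1,1\}$; if $b=2$ then coprimality forces $a$ odd and $\cos(\tfrac{a\pi}{2})=0$; and if $b=3$ then $3\nmid a$ and $\cos(\tfrac{a\pi}{3})\in\{-\tfrac12,\tfrac12\}$. In every case the value is rational, and this same computation records that the rational numbers occurring as some $\cos(\tfrac{a\pi}{b})$ are exactly $0,\pm\tfrac12,\pm1$.

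For $(1)\Rightarrow(2)$ I would argue as follows. Assume $\cos(\tfrac{a\pi}{b})\in\Q$. The case $b=1$ is immediate, so suppose $b\ge 2$; then Lemma~\ref{LemmaQuadraticIrrationalValuesOfCosineOfARationalAngle} gives that the degree of $\cos(\tfrac{a\pi}{b})$ over $\Q$ is $\phi(2b)/2$, and rationality forces $\phi(2b)=2$. It then remains to verify the elementary fact that the only positive integers $n$ with $\phi(n)=2$ are $n\in\{3,4,6\}$, a short finite check using multiplicativity of $\phi$ together with $\phi(p^k)=p^{k-1}(p-1)$ (for any other $n\ge 3$ the product defining $\phi(n)$ already picks up a factor $\ge 4$). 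Hence $2b\in\{3,4,6\}$, and since $b$ is an integer this forces $b\in\{2,3\}$; combined with the case $b=1$ this gives $b\le 3$, as claimed.

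I do not expect a genuine obstacle here: the content is essentially the degree formula of Lemma~\ref{LemmaQuadraticIrrationalValuesOfCosineOfARationalAngle} together with the classification of $n$ with $\phi(n)=2$. The only point requiring a little care is that the formula $\phi(2b)/2$ in that lemma is stated only for $b\ge 2$, which is exactly why the case $b=1$ must be separated off at the start of the $(1)\Rightarrow(2)$ argument.
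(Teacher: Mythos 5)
Your proof is correct and follows essentially the same route the paper intends: the corollary is deduced directly from Lemma \ref{LemmaQuadraticIrrationalValuesOfCosineOfARationalAngle} via ``rational iff degree $1$'', i.e.\ $\phi(2b)=2$, hence $2b\in\{4,6\}$, together with the separate trivial case $b=1$; your explicit classification of $n$ with $\phi(n)=2$ and the direct check of $(2)\Rightarrow(1)$ just fill in details the paper leaves implicit. As a minor aside, your computation correctly records $0=\cos(\tfrac{\pi}{2})$ among the rational values, which the paper's closing sentence (``only $\pm 1$ and $\pm\tfrac12$'') omits, though this does not affect the equivalence.
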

\begin{corollary}
\label{CorollaryQuadraticIrrationalTrigonometricNumbers}
Let $a, b \in \Z_{\geq 1}$ such that $\gcd(a, b) = 1$. Then $\cos(\tfrac{a\pi}{b})$ is quadratic irrational if and only if $b \in \{4, 5, 6\}$. In other words, the only quadratic irrational values of $\cos(\tfrac{a\pi}{b})$ are $\pm \tfrac{1}{\sqrt{2}}, \pm \tfrac{1}{4} \pm \tfrac{\sqrt{5}}{4}$ and $\pm \tfrac{\sqrt{3}}{2}$.
\end{corollary}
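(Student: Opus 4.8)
The plan is to read the statement off Lemma \ref{LemmaQuadraticIrrationalValuesOfCosineOfARationalAngle} directly. A real algebraic number is \emph{quadratic irrational} precisely when its degree over $\Q$ equals $2$, so by that lemma $\cos(\tfrac{a\pi}{b})$ is quadratic irrational if and only if $b \geq 2$ and $\phi(2b)/2 = 2$, that is, $b \geq 2$ and $\phi(2b) = 4$. Thus the whole problem reduces to solving the equation $\phi(2b) = 4$.

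The next step is this elementary Diophantine computation. Using the classical lower bound $\phi(n) > \sqrt{n}$ for $n > 6$, the equation $\phi(2b) = 4$ forces $2b < 16$ when $2b > 6$, leaving the even candidates $2b \in \{8, 10, 12, 14\}$, of which $\phi(8) = \phi(10) = \phi(12) = 4$ while $\phi(14) = 6$; and for $2b \leq 6$ one has $\phi(2) = 1$, $\phi(4) = \phi(6) = 2$, none equal to $4$. Hence $\phi(2b) = 4$ holds exactly for $b \in \{4, 5, 6\}$ (each of which also satisfies $b \geq 2$), which proves the asserted equivalence. (Alternatively, one invokes the standard classification $\phi(n) = 4 \iff n \in \{5,8,10,12\}$ and discards the odd value $n = 5$ since $2b$ is even; the same three values of $b$ result.)

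For the explicit list it remains to evaluate $\cos(\tfrac{a\pi}{b})$ over the admissible residues of $a$ in each case. For $b = 4$ the residues $a \equiv \pm 1, \pm 3 \pmod 8$ give $\cos(\tfrac{a\pi}{4}) = \pm\tfrac{1}{\sqrt 2}$; for $b = 6$ the residues $a \equiv \pm 1, \pm 5 \pmod{12}$ give $\cos(\tfrac{a\pi}{6}) = \pm\tfrac{\sqrt 3}{2}$; and for $b = 5$, using the classical values $\cos(\tfrac{\pi}{5}) = \tfrac14 + \tfrac{\sqrt 5}{4}$ and $\cos(\tfrac{2\pi}{5}) = -\tfrac14 + \tfrac{\sqrt 5}{4}$ together with $\cos(\tfrac{3\pi}{5}) = -\cos(\tfrac{2\pi}{5})$ and $\cos(\tfrac{4\pi}{5}) = -\cos(\tfrac{\pi}{5})$, one gets exactly the four numbers $\pm\tfrac14 \pm \tfrac{\sqrt 5}{4}$. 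Collecting these yields the stated list, and each is indeed quadratic irrational (degree $2$) by the equivalence just established. The only step with any content is the bound-plus-case-check solving $\phi(2b) = 4$; the trigonometric evaluations are entirely routine.
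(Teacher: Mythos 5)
Your proof is correct and follows exactly the route the paper intends: the paper states this corollary as an immediate consequence of Lemma \ref{LemmaQuadraticIrrationalValuesOfCosineOfARationalAngle}, and you simply supply the implicit details (degree $2$ iff $\phi(2b)=4$, hence $b\in\{4,5,6\}$, plus the routine evaluations). Your solution of $\phi(2b)=4$ via the bound $\phi(n)>\sqrt{n}$ for $n>6$ and the explicit cosine values are all accurate, so nothing needs to change.
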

A much more subtle result that we need to prove Proposition \ref{PropositionTrigonometricEquationsRationalSolutions} is the following generalisation of Niven's Theorem due to Berger, see \cite[Theorem 1.2]{BergerOnLinearIndependenceOfTrigonometricNumbers}.
\begin{theorem}[Berger]
\label{TheoremBerger}
Let $a_1, b_1, a_2, b_2 \in \Z_{\geq 1}$ such that $\gcd(a_1, b_1) = \gcd(a_2, b_2) = 1$ and $\tfrac{a_1}{b_1} \pm \tfrac{a_2}{b_2} \notin \Z$. Then the following are equivalent:
\begin{enumerate}
    \item $\cos(\tfrac{a_1\pi}{b_1}), \cos(\tfrac{a_2\pi}{b_2})$ are linearly dependent over $\Q$,
    \item $b_1, b_2 \leq 3$ or $(b_1, b_2) = (5, 5)$.
\end{enumerate}
\end{theorem}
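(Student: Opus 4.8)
The plan is to translate the statement into one about vanishing $\Q$-linear combinations of roots of unity and then invoke the classification of such relations. Writing $\omega = \e^{\iu \pi a_1/b_1}$ and $\eta = \e^{\iu\pi a_2/b_2}$, one has $2\cos(\tfrac{a_1\pi}{b_1}) = \omega + \omega^{-1}$ and $2\cos(\tfrac{a_2\pi}{b_2}) = \eta + \eta^{-1}$, where $\omega,\eta$ are roots of unity of orders dividing $2b_1, 2b_2$. A nontrivial $\Q$-linear relation $c_0 + c_1\cos(\tfrac{a_1\pi}{b_1}) + c_2\cos(\tfrac{a_2\pi}{b_2}) = 0$ is then equivalent to a vanishing rational sum of the six roots of unity $\{1,1,\omega^{\pm1},\eta^{\pm1}\}$ (the two copies of $1$ absorbing the constant $2c_0$). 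Crucially, the sporadic $(5,5)$ relation is precisely the pentagonal identity: since $\cos(\tfrac{\pi}{5}) - \cos(\tfrac{2\pi}{5}) = \tfrac12$ is nothing but $1 + \zeta_5 + \zeta_5^2 + \zeta_5^3 + \zeta_5^4 = 0$ rewritten through $2\cos = \zeta + \zeta^{-1}$, the constant term is genuinely needed to see it. All of the analysis takes place in the cyclotomic field $\Q(\zeta_{2N})$ with $N = \mathrm{lcm}(b_1,b_2)$, whose Galois group $(\Z/2N\Z)^{\times}$ acts by $\zeta \mapsto \zeta^k$.

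For the implication $(2)\Rightarrow(1)$ I would argue directly. If $b_1, b_2 \le 3$ then by Niven's Theorem (Corollary \ref{CorollaryNivensTheorem}) both cosines are rational, so $\{1, \cos(\tfrac{a_1\pi}{b_1}), \cos(\tfrac{a_2\pi}{b_2})\}$ are three rationals and hence $\Q$-linearly dependent. If $(b_1,b_2) = (5,5)$ then by Lemma \ref{LemmaQuadraticIrrationalValuesOfCosineOfARationalAngle} both cosines lie in the quadratic field $\Q(\zeta_{10})^{+} = \Q(\sqrt5)$, and the hypothesis $\tfrac{a_1}{b_1} \pm \tfrac{a_2}{b_2}\notin\Z$ forces the two values to be genuinely distinct members of $\{\pm\tfrac{1\pm\sqrt5}{4}\}$; exhibiting the pentagonal relation above then produces the required dependence.

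The substance is the converse $(1)\Rightarrow(2)$. Assuming a nontrivial relation, I would first exploit the full Galois action: applying $\sigma_k : \zeta\mapsto\zeta^k$ and averaging shows the relation persists under passing to Galois-conjugate angles, which already constrains the orders of $\omega$ and $\eta$ and, via the degree formula of Lemma \ref{LemmaQuadraticIrrationalValuesOfCosineOfARationalAngle} and the list in Corollary \ref{CorollaryQuadraticIrrationalTrigonometricNumbers}, prunes many possibilities. The key external input is the classification of minimal vanishing sums of roots of unity with rational coefficients, whose only indecomposable instances are, up to rotation and scaling, the prime relations
\[
1 + \zeta_p + \zeta_p^2 + \cdots + \zeta_p^{p-1} = 0 .
\]
Decomposing the fixed six-term multiset $\{1,1,\omega^{\pm1},\eta^{\pm1}\}$ into such pieces forces the primes dividing $2N$ into $\{2,3,5\}$; after discarding the degenerate cancellation $\zeta + (-\zeta) = 0$, which is ruled out exactly by the hypothesis $\tfrac{a_1}{b_1}\pm\tfrac{a_2}{b_2}\notin\Z$, one is left only with the rational regime ($b_i \le 3$) and the single pentagonal configuration $(b_1,b_2) = (5,5)$.

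The hard part will be this last combinatorial step: controlling how the six-element multiset can break into minimal prime relations when $b_1 \neq b_2$ and when several primes divide $N$ at once, since a minimal sub-relation can cross between the $\omega$- and $\eta$-orbits rather than respecting them. This is precisely the delicate case analysis carried out by Berger in \cite[Theorem 1.2]{BergerOnLinearIndependenceOfTrigonometricNumbers}, refining the one-variable result of Niven \cite{NivenIrrationalNumbers}, and I would follow that argument rather than reprove the vanishing-sum classification from scratch.
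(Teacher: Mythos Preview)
The paper does not prove this theorem; it is quoted from Berger \cite[Theorem 1.2]{BergerOnLinearIndependenceOfTrigonometricNumbers} and used as a black box in the proof of Proposition \ref{PropositionTrigonometricEquationsRationalSolutions}. There is therefore no in-paper proof to compare against, and your proposal --- which also ultimately defers to Berger's argument --- is entirely in line with how the result is handled here.

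A few remarks on your sketch are still worth making. First, you correctly read ``linearly dependent over $\Q$'' as dependence of the triple $\{1,\cos(\tfrac{a_1\pi}{b_1}),\cos(\tfrac{a_2\pi}{b_2})\}$ rather than of the pair alone; this is the reading the paper itself uses when it applies the theorem to the relation $\cos(2(r+1)\tfrac{a\pi}{b}) - q^{-1}\cos(2r\tfrac{a\pi}{b}) - (1-q^{-1}) = 0$, and it is forced by the $(5,5)$ case, since $\cos(\pi/5)/\cos(2\pi/5)$ is irrational while $\cos(\pi/5)-\cos(2\pi/5)=\tfrac12$. Second, your translation into a six-term vanishing sum of roots of unity and appeal to the Conway--Jones/Mann classification of minimal vanishing sums is exactly the standard route to results of this type, and is indeed the backbone of Berger's proof; the identification of the degenerate $p=2$ piece $\zeta + (-\zeta)=0$ with the excluded case $\tfrac{a_1}{b_1}\pm\tfrac{a_2}{b_2}\in\Z$ is the right observation. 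The honest acknowledgment that the mixed-prime combinatorics is the genuinely delicate part, and that you would follow Berger there, is appropriate.
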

If we have a solution $\lambda = \tfrac{a\tau}{2b} \in (0, \tfrac{\tau}{2})$ with $a,b$ coprime to
\begin{align*}
\sin_q(\tfrac{(r + 1)a \tau}{2b}) + q^{-1/2}\sin_q(\tfrac{r a \tau}{2b}) = 0 \, , 
\end{align*}
or equivalently 
\begin{align}
\label{EqTheMainSineEquation}
\sin(\tfrac{(r + 1)a \pi}{b}) + q^{-1/2}\sin(\tfrac{r a \pi}{b}) = 0 \, , 
\end{align}
then multiplying by $\sin(\tfrac{(r + 1)a \pi}{b}) - q^{-1/2}\sin(\tfrac{r a \pi}{b})$ and using the formula $2\sin^2(t) = 1 - \cos(2t)$ yields the equation
\begin{align*}
\cos(2(r+1)\tfrac{a \pi}{b}) - q^{-1} \cos(2r\tfrac{a \pi}{b}) - (1 - q^{-1}) = 0 \, . 
\end{align*}
By Theorem \ref{TheoremBerger} with $a_1/b_1 = 2(r+1)a\pi/b$ and $a_2/b_2 = 2ra\pi/b$ we can restrict the possible solutions to the following cases:
\begin{enumerate}
    \item $\tfrac{2(2 r + 1)a}{b} = \tfrac{a_1}{b_1} + \tfrac{a_2}{b_2} \in \Z$, which implies that there is a $n \in \Z$ such that $r = (nb - 2)/4$. Since $0 \leq r \leq b-1$, we may assume $n \in \{1, 2, 3\}$.
    \item $\tfrac{2a}{b} = \tfrac{a_1}{b_1} - \tfrac{a_2}{b_2} \in \Z$, meaning $b = 2$.
    \item $5 \tfrac{2(r + 1)a}{b}, 5 \tfrac{2ra}{b} \in \Z$, which implies 
    \begin{align*}
        \frac{10a}{b} = 5 \frac{2(r + 1)a}{b} -  5 \frac{2ra}{b} \in \Z \, ,
    \end{align*}
    meaning $b \in \{2, 5, 10\}$.
    \item there are $1 \leq n_1, n_2 \leq 3$ such that $n_1 \tfrac{2(r + 1)a}{b}, n_2 \tfrac{2ra}{b} \in \Z$, which implies that 
    \begin{align*}
        \frac{2n_1}{b} = n_1 \frac{2(r + 1)a}{b} -  n_1 \frac{2ra}{b} \in \Z
    \end{align*}
    if $n_1 = n_2$ and 
    \begin{align*}
        \frac{2n_1n_2}{b} = n_1 n_2 \frac{2(r + 1)a}{b} -  n_1 n_2 \frac{2ra}{b} \in \Z
    \end{align*}
    if $n_1 \neq n_2$, meaning $b \in \{2, 3, 4, 6, 12\}$.
\end{enumerate}
To prove Proposition \ref{PropositionTrigonometricEquationsRationalSolutions}, we check each of these cases. 

\begin{proof}[Proof of Proposition \ref{PropositionTrigonometricEquationsRationalSolutions}]
For case (1) above we plug in $r = (nb - 2)/4$ into Equation \ref{EqTheMainSineEquation} and split $an$ into congruence classes modulo 4 to get
\begin{align*}
\sin((\tfrac{nb - 2}{4} + 1)\tfrac{a\pi}{b}) &+ q^{-1/2} \sin(\tfrac{nb - 2}{4}\tfrac{a\pi}{b}) = \sin(\tfrac{a\pi}{2b} + \tfrac{na\pi}{4}) - q^{-1/2} \sin(\tfrac{a\pi}{2b} - \tfrac{na\pi}{4}) \\
&= \begin{dcases} 
(-1)^k(1 - q^{-1/2}) \sin(\tfrac{a\pi}{2b}) &\mbox{ if } na = 4k \\
\tfrac{(-1)^k}{\sqrt{2}} \Big( (1 + q^{-1/2})\cos(\tfrac{a\pi}{2b}) + (1 - q^{-1/2})\sin(\tfrac{a\pi}{2b}) \Big) &\mbox{ if } na = 4k + 1 \\
(-1)^k(1 + q^{-1/2}) \cos(\tfrac{a\pi}{2b}) &\mbox{ if } na = 4k + 2 \\
\tfrac{(-1)^k}{\sqrt{2}} \Big( (1 + q^{-1/2})\cos(\tfrac{a\pi}{2b}) - (1 - q^{-1/2})\sin(\tfrac{a\pi}{2b}) \Big)  &\mbox{ if } na = 4k + 3 
\end{dcases} 
\end{align*}
for some $k \in \Z$. This expression does not vanish when $an \equiv 0,2 \mod 4$ since $\sin(\tfrac{a\pi}{2b}), \cos(\tfrac{a\pi}{2b}) > 0$. When $an \equiv 1, 3 \mod 4$ we get that Equation \ref{EqTheMainSineEquation} can be written as 
\begin{align*}
\tan(\tfrac{a\pi}{2b}) = \pm \frac{\sqrt{q} + 1}{\sqrt{q} - 1} \, , 
\end{align*}
which implies that 
\begin{align*}
\cos(\tfrac{a\pi}{b}) = \frac{1 - \tan^2(\tfrac{a\pi}{2b})}{1 + \tan^2(\tfrac{a\pi}{2b})} = - \frac{\sqrt{q}}{q + 1} \, .
\end{align*}
Since $q \geq 2$, we see that $\pi^{-1}\arccos(- \frac{\sqrt{q}}{q + 1})$ is irrational by Corollary \ref{CorollaryQuadraticIrrationalTrigonometricNumbers}. We conclude that there are no solutions to Equation \ref{EqTheMainSineEquation} in case (1).

Cases (2), (3) and (4) one checks by hand, plugging in every possible value of $\tfrac{a\pi}{b}$ into Equation \ref{EqTheMainSineEquation} for $1 \leq a \leq b$ coprime and $0 \leq r \leq b - 1$ when $b \in \{2, 3, 4, 5, 6, 10, 12\}$. The solutions are 
\begin{align*}
\sin(\tfrac{(2 + 1)3 \pi}{4}) + 2^{-1/2}\sin(\tfrac{2 \cdot 3 \pi}{4}) &= 0 \, , \\
\sin(\tfrac{(9 + 1)5 \pi}{12}) + 2^{-1/2}\sin(\tfrac{9 \cdot 5 \pi}{12}) &= 0 \, , \\
\sin(\tfrac{(9 + 1)11 \pi}{12}) + 2^{-1/2}\sin(\tfrac{9 \cdot 11 \pi}{12}) &= 0 \, , \\
\sin(\tfrac{(4 + 1)5 \pi}{6}) + 3^{-1/2}\sin(\tfrac{4 \cdot 5 \pi}{6}) &= 0 \, .
\end{align*}
\end{proof}


\printbibliography

{\small\textsc{Department of Mathematics, Chalmers and University of Gothenburg, Gothenburg, Sweden \\
 \textit{Email address}: {\tt bylehn@chalmers.se}}}

\end{document}